\newtheorem{thm}{Theorem}[section]
\newcommand{\bt}{\begin{thm}}
\newcommand{\et}{\end{thm}}
\newtheorem{ex}[thm]{Example}
\newtheorem{Cor}[thm]{Corollary}   
\newtheorem{lem}[thm]{Lemma}   
\newtheorem{prop}[thm]{Proposition}
\newtheorem{Def}[thm]{Definition}
\newtheorem{rmrk}[thm]{Remark}
\newcommand{\GHto}{\stackrel { \textrm{GH}}{\longrightarrow} }
\newcommand{\Fto}{\stackrel {\mathcal{F}}{\longrightarrow} }
\newcommand{\set}{\rm{set}}
\newcommand{\mass}{{\mathbf M}}
\newcommand{\N}{\mathbb{N}}
\newcommand{\R}{\mathbb{R}}
\def\Xint#1{\mathchoice
{\XXint\displaystyle\textstyle{#1}}%
{\XXint\textstyle\scriptstyle{#1}}%
{\XXint\scriptstyle\scriptscriptstyle{#1}}%
{\XXint\scriptscriptstyle\scriptscriptstyle{#1}}%
\!\int}
\def\XXint#1#2#3{{\setbox0=\hbox{$#1{#2#3}{\int}$ }
\vcenter{\hbox{$#2#3$ }}\kern-.6\wd0}}
\def\dashint{\Xint-}
\begin{document}

\title[IMCF and the Stability of the PMT]{Inverse Mean Curvature Flow and the Stability of the Positive Mass Theorem}

\author{Brian Allen}
\address{University of Hartford}
\email{brianallenmath@gmail.com}

\date{Fall 2021}

\keywords{Inverse Mean Curvature Flow, Stability, Almost Rigidity, Positive Mass Theorem, Hawking Mass, Gromov-Hausdorff Convergence, Sormani-Wenger Intrinsic Flat Convergence}

\begin{abstract}
We study the stability of the Positive Mass Theorem (PMT) in the case where a sequence of regions of manifolds with positive scalar curvature $U_T^i\subset M_i^3$ are foliated by a smooth solution to Inverse Mean Curvature Flow (IMCF) which may not be uniformly controlled near the boundary. Then if $\partial U_T^i = \Sigma_0^i \cup \Sigma_T^i$, $m_H(\Sigma_T^i) \rightarrow 0$ and extra technical conditions are satisfied we show that $U_T^i$ converges to a flat annulus with respect to Sormani-Wenger Intrinsic Flat (SWIF) convergence. 
\end{abstract}

\maketitle

\section{Introduction}\label{sect-Intro}
 If we consider a complete, asymptotically flat manifold with nonnegative scalar curvature $M^3$ then the Positive Mass Theorem (PMT) says that $M^3$ has positive ADM mass. The rigidity statement says that if $m_{ADM}(M) = 0$ then $M$ is isometric to Euclidean space. In this paper we are concerned with the stability of this rigidity statement in the case where we can foliate a region of $M$ by a smooth solution of Inverse Mean Curvature Flow (IMCF).

The stability problem for the PMT has been studied by many authors and one should see the author's previous work \cite{BA2} for a more complete discussion of the history of this problem. Here we particularly note the work of Lee and Sormani \cite{LS1} on stability of the PMT under the assumption of rotationally symmetry and the work of Huang, Lee and Sormani \cite{HLS} under the assumption that the asymptotically flat manifold can be represented as a graph over $\R^n$. We also note the recent work of Sormani and Stavrov \cite{SS} where the stability of the PMT is studied on geometricstatic manifolds and the work of Bryden \cite{Br} where stability of the PMT is studied on axisymmetric manifolds under $W^{1,p}$, $1 \le p < 2$, convergence.

In \cite{BA2} the author studied the stability of the PMT on manifolds which can be foliated by a smooth solution of Inverse Mean Curvature Flow (IMCF) which is uniformly controlled. Under these assumption the author was able to show that a sequence of regions of asymptotically flat manifolds whose Hawking mass goes to zero will converge to Euclidean space under $L^2$ convergence. The goal of the current paper is to extend these results in order to address the conjecture of Lee and Sormani \cite{LS1} on the stability of the PMT under Sormani-Wenger Intrinsic Flat(SWIF) convergence. 

In \cite{HI}, Huisken and Ilmanen show how to use weak solutions of IMCF in order to prove the Riemannian Penrose Inequality (RPI) in the case of a connected boundary and they note that their techniques give another proof of the  PMT for asymptotically flat Riemanian manifolds when $n=3$ (see Schoen and Yau \cite{SY}, and Witten \cite{W} for more general proofs of the PMT as well as Bray \cite{B} for a more general proof of the RPI). The rigidity of both the PMT and the RPI are also proved in \cite{HI} and the present work builds off of these arguments by using IMCF to provide a special coordinate system on each member of the sequence of manifolds $M^3_i$ to show stability of the PMT. 

If we have $\Sigma^2$ a surface in a Riemannian manifold, $M^3$, we will denote the induced metric, mean curvature, second fundamental form, principal curvatures, Gauss curvature, area, Hawking mass and Neumann isoperimetric constant as $g$, $H$, $A$, $\lambda_i$, $K$, $|\Sigma|$, $m_H(\Sigma)$, $IN_1(\Sigma)$, respectively. We will denote the Ricci curvature, scalar curvature, sectional curvature tangent to $\Sigma$, and ADM mass as $Rc$, $R$, $K_{12}$, $m_{ADM}(M)$, respectively.

Now the class of regions of manifolds to which we will by proving stability of the PMT is defined.

\begin{Def} \label{IMCFClass} Define the class of manifolds with boundary foliated by IMCF as follows
\begin{align*}
\mathcal{M}_{r_0,H_0,I_0}^{T,H_1,A_1}:=\{& U_T \subset M, R \ge 0|
\exists \Sigma \subset M \text{compact, connected surface such that } 
\\& IN_1(\Sigma) \ge I_0, m_H(\Sigma) \ge 0 \text{,and } |\Sigma|=4\pi r_0^2. 
\\ &\exists \Sigma_t \text{ smooth solution to IMCF, such that }\Sigma_0=\Sigma,
\\& H_0 \le H(x,t) \le H_1, |A|(x,t) \le A_1 \text{ for } t \in [0,T],
\\&\text{and } U_T = \{x\in\Sigma_t: t \in [0,T]\} \}
\end{align*}
where $0 < H_0 < H_1 < \infty$, $0 < I_0,A_1,r_0 < \infty$ and $0 < T < \infty$.
\end{Def}

Before we state the stability theorems we define some metrics on $\Sigma \times [0,T]$ for Riemmanian metrics $\delta, \hat{g}^i \in \mathcal{M}_{r_0,H_0,I_0}^{T,H_1,A_1}$ foliated by IMCF that will be used throughout this document:
\begin{align}
\delta &= \frac{r_0^2}{4}e^t dt^2 + r_0^2e^t \sigma
\\ \hat{g}^i&=\frac{1}{H_i(x,t)^2}dt^2 + g^i(x,t)
\end{align}
where $\sigma$ is the round metric on $\Sigma$ and $g^i(x,t)$ is the metric on $\Sigma_t^i$. The first metric is the flat Euclidean metric and the second is the metric on $U_T^i$ with respect to the IMCF foliation.

We now state our first result which assumes uniform control on various curvature quantities in order to obtain GH and SWIF convergence. In general we do not expect GH stability of the PMT (See Example 5.6 in Lee and Sormani \cite{LS1}) but the curvature bounds assumed make GH convergence reasonable in these theorems. It is important to note that we do not need bounds on the full Ricci tensor though and again the importance of this Theorem is when we allow these bounds to degenerate in order to prove Theorem \ref{SPMTJumpRegion}.

One should note that all of the norms in the theorems below are defined with respect to $\delta$ on $\Sigma\times[0,T]$. This requires a diffeomorphism onto the coordinate space which was defined in the author's previous work \cite{BA2} and is discussed in Theorem \ref{avgH}.

\begin{thm}\label{SPMT}
Let $U_{T}^i \subset M_i^3$ be a sequence s.t. $U_{T}^i\subset \mathcal{M}_{r_0,H_0,I_0}^{T,H_1,A_1}$ and $m_H(\Sigma_{T}^i) \rightarrow 0$ as $i \rightarrow \infty$.  If we assume that
\begin{align}
 |K^i|_{C^{0,\alpha}(\Sigma\times[0,T])} &\le C
\\|Rc^i(\nu,\nu)|_{C^{0,\alpha}(\Sigma\times[0,T])} &\le C
\\ |R^i|_{C^{0,\alpha}(\Sigma\times[0,T])} &\le C
 \\diam(\Sigma_t^i) &\le D \hspace{0.25cm} \forall i,\hspace{0.25cm} t\in [0,T]
 \\ H_i(x,0)^2 &\le \frac{4}{r_0^2} + \frac{C}{i}
 \end{align}
then $\hat{g}^i$ converges uniformly to $\delta$, as well as,
\begin{align}
(U_T,\hat{g}^i) &\GHto (S^2 \times [0,T]),\delta)
\\(U_T,\hat{g}^i) &\Fto (S^2 \times [0,T]),\delta).
\end{align}
\end{thm}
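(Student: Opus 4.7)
The plan is to parallel the argument for Theorem~\ref{ContinuousSPMT}, but, lacking a direct $C^{0,\alpha}$ bound on $H_i$, to settle for uniform rather than Hölder convergence of $\hat g^i \to \delta$; uniform convergence still suffices for both GH and SWIF conclusions.

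First I would extract the global consequences of $m_H(\Sigma_T^i) \to 0$ along the flow. The initial bound $H_i(x,0)^2 \le 4/r_0^2 + C/i$ combined with $|\Sigma_0^i|=4\pi r_0^2$ forces $m_H(\Sigma_0^i) \to 0$; together with $m_H(\Sigma_0^i) \ge 0$ (built into the class $\mathcal{M}_{r_0,H_0,I_0}^{T,H_1,A_1}$) and Geroch monotonicity along IMCF, this yields uniform convergence $m_H(\Sigma_t^i) \to 0$ for $t\in[0,T]$. Integrating Geroch's formula then gives $L^1$-smallness on $\Sigma \times [0,T]$ of $R^i$, of $|\nabla H_i|^2/H_i^2$, and of the traceless second fundamental form quantity $(\lambda_1-\lambda_2)^2$.

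Next I would invoke the previous work \cite{BA2}, which under the stated curvature and diameter bounds delivers $L^2$ convergence of the spatial slices $g^i(x,t) \to r_0^2 e^t \sigma$, uniformly in $t$. The $C^{0,\alpha}$ bound on $K^i$ together with the diameter bound yields equicontinuity of the family $\{g^i(\cdot,t)\}$ as metrics on $\Sigma$, and Arzela-Ascoli promotes this to uniform convergence on $\Sigma\times[0,T]$. The hard part is establishing uniform convergence of $H_i \to \frac{2}{r_0}e^{-t/2}$ without Hölder control on $H_i$ itself; I would do this indirectly via the Gauss equation $\lambda_1\lambda_2 = K - \tfrac12 R + Rc(\nu,\nu)$, whose right-hand side is $C^{0,\alpha}$-bounded by hypothesis, so that $\lambda_1\lambda_2$ is equicontinuous. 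Combined with the uniform bound $|A^i|\le A_1$ and the $L^1$-smallness of $(\lambda_1-\lambda_2)^2$ from the previous step, this pins both principal curvatures close to $\tfrac{1}{r_0}e^{-t/2}$ in $L^2$, and the Hölder bound on $\lambda_1\lambda_2$ then upgrades this to uniform convergence. Consequently $1/H_i^2 \to \tfrac{r_0^2}{4}e^t$ uniformly, so $\hat g^i \to \delta$ uniformly on $\Sigma\times[0,T]$.

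With $\hat g^i \to \delta$ uniformly on the fixed background $\Sigma\times[0,T]$, the identity map becomes an $\varepsilon$-isometry between $(U_T,\hat g^i)$ and $(S^2\times[0,T],\delta)$ for $i$ large, yielding GH convergence. Since the metrics are uniformly equivalent, we obtain uniform volume and diameter bounds, and SWIF convergence follows from the standard principle (e.g.\ via Sormani-Wenger) that uniform convergence of Riemannian metrics on a fixed compact manifold with uniform volume and diameter bounds implies $\mathcal{F}$-convergence of the associated integral current spaces.
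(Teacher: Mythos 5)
Your proposal breaks down at the step where you upgrade $H_i$ to uniform convergence. Writing $H_i^2=(\lambda_1+\lambda_2)^2=4\lambda_1\lambda_2+(\lambda_1-\lambda_2)^2$, the Gauss-equation argument does give equicontinuity (hence, with the a.e.\ pointwise limits from Lemma \ref{GoToZero}, uniform convergence) of the product $\lambda_1\lambda_2$, but the traceless part $(\lambda_1-\lambda_2)^2$ is only small in $L^1(\Sigma_t)$ for a.e.\ $t$; the hypothesis $|A|\le A_1$ is a pointwise bound, not a modulus of continuity, so Arzela--Ascoli does not apply to it and nothing forces $(\lambda_1-\lambda_2)^2\to 0$ uniformly. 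Consequently $1/H_i^2\to \frac{r_0^2}{4}e^t$ only in $L^2$, not in $C^0$. This is precisely the distinction between Theorem \ref{ContinuousSPMT} (which assumes $|H_i|_{C^{0,\alpha}}\le C$ exactly to get $C^{0,\alpha}$ convergence of $H_i$) and Theorem \ref{SPMT}, whose entire point is to drop that hypothesis. Note also that the phrase ``$\hat g^i$ converges uniformly to $\delta$'' in the statement refers to uniform convergence of the distance functions $d_{\hat g^i}\to d_\delta$ (Subsection \ref{subsec Uniform Def}), not of the metric tensors; tensor-level uniform convergence is exactly what is unavailable here.

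The paper's route avoids this: it feeds the Lipschitz equivalence $\lambda^{-1}\delta_c\le\hat g^i\le\lambda\delta_c$ into the Huang--Lee--Sormani compactness theorem (Theorem \ref{HLS-thm}) to extract a subsequence converging uniformly/GH/SWIF to some length metric $g_\infty$, and then identifies $g_\infty=\delta$ by a two-sided pointwise comparison of distances. The upper bound $\limsup_i d_{\hat g^i}(p,q)\le d_\delta(p,q)$ (Corollary \ref{limsupEst}) uses only the $L^2$ convergence of $1/H_i^2$ and of $g^i$, applied along families of approximating curves (Lemmas \ref{IMCF-L}, \ref{LineApprox}) chosen so that the $L^2$-small quantity is actually small on the curve. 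The lower bound $\liminf_i d_{\hat g^i}(p,q)\ge d_\delta(p,q)$ (Corollary \ref{distLowerBound2}) uses a one-sided $C^0$ estimate $1/H_i(x,t)^2\ge \frac{r_0^2}{4}e^t-\frac{C}{i}$, which comes from the maximum principle for $H$ under the Ricci lower bound $Rc^i(\nu,\nu)\ge -C/i$ (itself extracted from Lemma \ref{WeakRicciEstimate} plus the $C^{0,\alpha}$ bound) together with the hypothesis $H_i(x,0)^2\le \frac{4}{r_0^2}+\frac{C}{i}$ (Corollary \ref{C0BoundBelow}). In your write-up that initial mean-curvature hypothesis is used only to observe $m_H(\Sigma_0^i)\to 0$, which is not where it is needed; without the one-sided $C^0$ bound it supplies, the $\liminf$ direction of the distance comparison has no proof. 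To repair your argument you would need to replace the claimed uniform convergence of $H_i$ with this asymmetric scheme: $L^2$ control from above along curves, $C^0$ control from below via the maximum principle.
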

\begin{rmrk}\label{CompareToFinster}
One should compare this theorem with the work of Finster \cite{F}, Bray and Finster \cite{BF}, and Finster and Kath \cite{FK} in the case where $M^i$ is a spin manifold and a $C^0$ bound on $|Rm|$ and a $L^2$ bound on $|\nabla Rm|$ are assumed in order to show $C^0$ convergence to Euclidean space. In that case the level sets of the spinor field $\psi$ are analogous to the solution of IMCF at time $t$, $\Sigma_t$, and so it is interesting to note that we do not need a bound on the Riemann tensor in this work. Instead, if we settle for a weaker notion of convergence then we can get away with considerably weaker assumptions on the curvature of $M_i$.
\end{rmrk}
We now give one more version of the stability of the PMT where we require less curvature information in exchange for comparison inequality between the metric on $\Sigma_t^i$ induced from $U_T^i$ and the corresponding sphere $r_0^2\sigma e^t$.
\begin{thm}\label{SPMTLessCurv}
Let $U_{T}^i \subset M_i^3$ be a sequence s.t. $U_{T}^i\subset \mathcal{M}_{r_0,H_0,I_0}^{T,H_1,A_1}$ and $m_H(\Sigma_{T}^i) \rightarrow 0$ as $i \rightarrow \infty$.  If we assume that
\begin{align}
|Rc^i(\nu,\nu)|_{C^{0,\alpha}(\Sigma\times[0,T])} &\le C\label{RicciOnlyCurvBound}
 \\ \left(1 - \frac{C}{j} \right) r_0^2 e^t \sigma \le g^i(x,t) &\le C r_0^2 e^t \sigma \text{ } \forall (x,t) \in \Sigma\times [0,T]\label{MetricBounds}
 \\ H_i(x,0)^2 &\le \frac{4}{r_0^2} + \frac{C}{i}
 \end{align}
then $\hat{g}^i$ converges uniformly to $\delta$, as well as,
\begin{align}
(U_T,\hat{g}^i) &\GHto (S^2 \times [0,T]),\delta)
\\(U_T,\hat{g}^i) &\Fto (S^2 \times [0,T]),\delta).
\end{align}
\end{thm}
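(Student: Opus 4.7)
The argument parallels the proof of Theorem~\ref{SPMT}, with the role previously played by the $C^{0,\alpha}$ bounds on $K^i$ and $R^i$ now carried by the metric comparison (\ref{MetricBounds}). First, I would invoke Geroch monotonicity of the Hawking mass under smooth IMCF. Combined with $m_H(\Sigma_0^i)\ge 0$ and $m_H(\Sigma_T^i)\to 0$, monotonicity forces $m_H(\Sigma_t^i)\to 0$ uniformly in $t\in[0,T]$ and yields the integral decay
\[
\int_0^T\!\!\int_{\Sigma_t^i}\left(2\frac{|\nabla H_i|^2}{H_i^2}+\tfrac{1}{2}(\lambda_1^i-\lambda_2^i)^2+R^i\right)d\mu\,dt\ \longrightarrow\ 0.
\]

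Next, I would upgrade this to uniform control on $H_i$. The initial pointwise bound $H_i(\cdot,0)^2\le 4/r_0^2+C/i$, together with $m_H(\Sigma_0^i)\ge 0$ and $|\Sigma_0^i|=4\pi r_0^2$, squeezes $H_i(\cdot,0)^2\to 4/r_0^2$ in $L^2$; combining the vanishing of $\int|\nabla H_i|^2/H_i^2$ with $m_H(\Sigma_t^i)\to 0$ and $|\Sigma_t^i|=4\pi r_0^2 e^t$ propagates this to $H_i^2\to 4/(r_0^2 e^t)$ in $L^2(\Sigma_t)$ for every $t$. The IMCF evolution for $H$,
\[
\partial_t H_i=\frac{\Delta H_i}{H_i^2}-2\frac{|\nabla H_i|^2}{H_i^3}-\frac{|A_i|^2+Rc^i(\nu,\nu)}{H_i},
\]
together with the $L^\infty$ bounds in $\mathcal{M}_{r_0,H_0,I_0}^{T,H_1,A_1}$, the $C^{0,\alpha}$ bound (\ref{RicciOnlyCurvBound}) on $Rc^i(\nu,\nu)$, and the uniform geometric control supplied by (\ref{MetricBounds}) (which yields a uniform diameter bound and a uniform Poincar\'e constant on each slice), should give parabolic $C^{0,\alpha}$-type regularity of $H_i$ in the spirit of \cite{BA2}. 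Arzel\`a--Ascoli combined with the $L^2$ convergence then yields $H_i(x,t)\to 2/(r_0 e^{t/2})$ uniformly.

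For the tangential metric, note that IMCF enforces $|\Sigma_t^i|=4\pi r_0^2 e^t$, while the pointwise inequality $\sqrt{\det g^i}\ge(1-C/j)\sqrt{\det(r_0^2 e^t\sigma)}$ coming from (\ref{MetricBounds}) together with equality of areas forces $\det g^i/\det(r_0^2 e^t\sigma)\to 1$ in $L^1$. The linear algebra fact that a symmetric $2\times 2$ matrix $A$ with $A\ge(1-\varepsilon)I$ and $\det A\le 1$ must be within $O(\sqrt{\varepsilon})$ of $I$ then upgrades (\ref{MetricBounds}) to $g^i(\cdot,t)\to r_0^2 e^t\sigma$ in $L^1$; the uniform upper bound together with the IMCF evolution $\partial_t g_{ij}^i=2A_{ij}^i/H_i$ and the uniform regularity from the $Rc^i(\nu,\nu)$ bound should bootstrap this to uniform convergence. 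Combined with Step~2 this gives $\hat{g}^i\to\delta$ uniformly on $\Sigma\times[0,T]$, from which Gromov--Hausdorff convergence follows immediately; SWIF convergence to the same limit then follows from the uniform volume/diameter bounds via the comparison between GH and $\mathcal{F}$ convergence used in \cite{BA2}.

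The main obstacle I expect is the bootstrapping in Step~2: extracting enough H\"older regularity of $H_i$ to convert $L^2$ convergence into uniform convergence while using only the normal Ricci bound (\ref{RicciOnlyCurvBound}) and the weak structural information (\ref{MetricBounds}), rather than the $C^{0,\alpha}$ bounds on $K^i$ and $R^i$ used in Theorem~\ref{SPMT}. The parabolic character of the IMCF equation for $H$, the $L^\infty$ bound on $|A|$ from the class $\mathcal{M}$, and the fact that (\ref{MetricBounds}) forces uniformly bounded geometry on each slice are exactly what is needed, but making the Schauder-type estimate quantitative in the presence of only one-sided degeneration of the metric comparison is the delicate point.
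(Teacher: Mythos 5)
There is a genuine gap, and it sits exactly where you flag it: the bootstrap from $L^2$ (resp.\ $L^1$) convergence of $H_i$ and $g^i$ to \emph{uniform} convergence of the metric tensors. Under the hypotheses of this theorem that upgrade is not available, and the structure of the paper makes this explicit: Theorem \ref{ContinuousSPMT} must \emph{assume} $|H_i|_{C^{0,\alpha}(\Sigma\times[0,T])}\le C$ as a separate hypothesis precisely because H\"older control of $H_i$ cannot be extracted from the evolution equation with only an $L^\infty$ bound on $|A|$, a bound on $Rc(\nu,\nu)$, and $C^0$ control of the slice metrics (the operator $\Delta_{g^i}$ has coefficients with no uniform modulus of continuity, and the equation carries a quadratic gradient term). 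Theorem \ref{SPMTLessCurv} is designed to dispense with that hypothesis, so a proof that routes through uniform convergence of $H_i$ is circular with respect to the whole point of the statement. The same objection applies to your Step 3: your determinant/area argument does give $g^i\to r_0^2e^t\sigma$ in $L^1$, but ``$\partial_t g_{ij}=2A_{ij}/H_i$ plus the $Rc^i(\nu,\nu)$ bound'' does not upgrade this to uniform tensor convergence, and the paper never claims such convergence here. Note also that ``$\hat g^i$ converges uniformly to $\delta$'' in the conclusion means uniform convergence of the distance functions, not of the tensors.

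The paper's actual mechanism avoids tensor convergence entirely. First, \eqref{RicciOnlyCurvBound} is used only to produce the one-sided bound $Rc^i(\nu,\nu)\ge -C/i$: Lemma \ref{WeakRicciEstimate} gives $Rc^i(\nu,\nu)\rightharpoonup 0$, and Arzel\`a--Ascoli with the $C^{0,\alpha}$ bound upgrades this to uniform convergence to $0$ (Theorem \ref{ConvForZeroSlice}). Feeding $Rc^i(\nu,\nu)\ge -C/i$ and $H_i(x,0)^2\le 4/r_0^2+C/i$ into the maximum-principle estimate (Corollary \ref{C0BoundBelow}) yields the pointwise lower bound $H_i(x,t)^{-2}\ge \tfrac{r_0^2}{4}e^t-C/i$ --- a bound your proposal never produces. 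Then Theorem \ref{IMCFConv} runs: the two-sided bounds \eqref{MetricBounds} and $H_0\le H_i\le H_1$ give uniform Lipschitz bounds on $d_{\hat g^i}$ relative to a fixed background, so the Huang--Lee--Sormani compactness theorem (Theorem \ref{HLS-thm}) extracts a subsequence converging uniformly, in GH, and in SWIF to \emph{some} length metric; the limit is identified as $\delta$ by pointwise convergence of distances, where $\limsup d_{\hat g^i}(p,q)\le d_\delta(p,q)$ comes from the $L^2$ convergence of \cite{BA2} applied along families of approximating curves (Lemmas \ref{IMCF-L}--\ref{LineApproxSigma}, Corollary \ref{limsupEst}), and $\liminf d_{\hat g^i}(p,q)\ge d_\delta(p,q)$ comes from the one-sided $C^0$ lower bounds on $1/H_i^2$ and on $g^i$ (Corollary \ref{distLowerBound2}). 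Your Step 1 and the area/determinant observation are fine, but without the maximum-principle lower bound on $1/H_i^2$ and without a compactness-plus-distance-identification mechanism, the proof as proposed does not close.
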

\begin{rmrk}
One can think of assumption \eqref{MetricBounds} as a relaxation of the assumption of rotational symmetry made by Lee and Sormani \cite{LS1}. The lower bound is stronger than the upper bound because of the observations of the author and Sormani \cite{BS} on comparing $L^2$ convergence to GH and SWIF convergence (See Theorem \ref{WarpConv} for a similar assumption). In particular, Example 3.4 of the author and Sormani's paper \cite{BS} illustrates what can happen if this lower bound is not assumed.

In this version of the stability of the PMT one could replace assumption \eqref{RicciOnlyCurvBound} with 
\begin{align}
Rc^i(\nu,\nu) \ge - \frac{C}{i}\label{RicciC0LowerBound}
\end{align}
and the result will also follow. This is becuase in the other versions of stability the assumption \eqref{RicciOnlyCurvBound} is used in a more essential way but here it is just used to achieve \eqref{RicciC0LowerBound} and hence it could be advantageous to allow the upper bound on $Rc^i(\nu,\nu)$ to degenerate.
\end{rmrk}
 
Now if we define 
\begin{align} 
 U_{t_1^k}^{i,t_2^k} = \{x \in \Sigma_t^i: t \in [t_1^k,t_2^k]\}
 \end{align}
then we can say that 
\begin{align}
U_{t_1^k}^{i,t_2^k}\in \mathcal{M}_{r_0,H_0,I_0}^{t_2^k-t_1^k,H_1,A_1}
\end{align} 
by noticing that the substitution $s = t-t_1^k$ where $s \in [0,t_2^k-t_1^k]$ implies 
\begin{align}
U_{t_2^k-t_1^k}^i\in \mathcal{M}_{r_0,H_0,I_0}^{t_2^k-t_1^k,H_1,A_1}.
\end{align}
Then we can obtain the new result which allows the regions foliated by IMCF to approach jump regions and obtains SWIF convergence (See figure \ref{DepictingSWIFStability}).

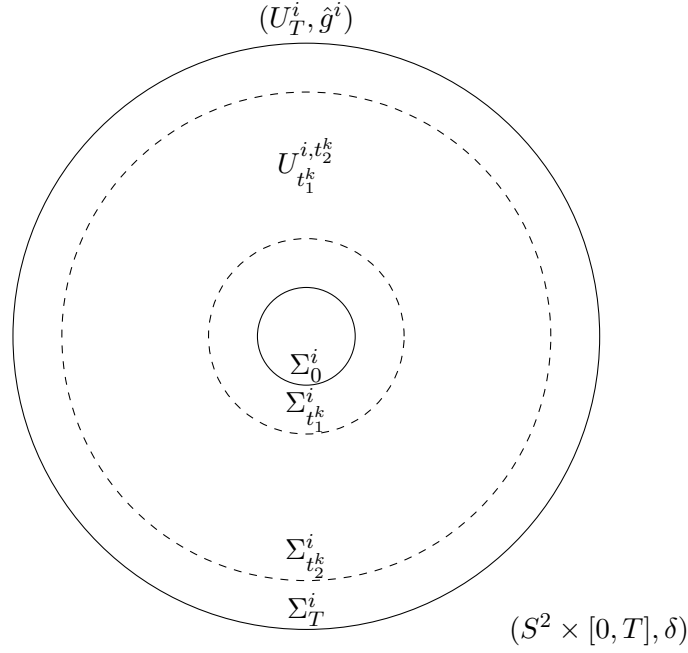
\begin{figure}\label{DepictingSWIFStability}
\begin{tikzpicture}[scale=.65]
\draw (0,0) circle (1cm);
\draw[dashed] (0,0) circle (2cm);
\draw[dashed] (0,0) circle (5cm);
\draw (0,0) circle (6cm);
\draw (0,6.5) node{$(U_T^i,\hat{g}^i)$};
\draw (0,3.5) node{$U_{t_1^k}^{i,t_2^k}$};
\draw (0,-.6) node{$\Sigma_0^i$};
\draw (0,-1.5) node{$\Sigma_{t_1^k}^i$};
\draw (0,-4.5) node{$\Sigma_{t_2^k}^i$};
\draw (0,-5.6) node{$\Sigma_T^i$};
\draw (6,-6) node{$(S^2\times [0,T],\delta) $};
\end{tikzpicture}
\caption{Visualizing the construction of Theorem \ref{SPMTJumpRegion} in the paramaterization space $S^2\times [0,T]$.}
\end{figure}

 \begin{thm}\label{SPMTJumpRegion}
Let $U_{T}^i \subset M_i^3$ be a sequence and choose $t_1^k,t_2^k \in [0,T]$, $k \in \N$ where  
\begin{align}
t_1^k < t_2^k, \hspace{1cm} \lim_{k \rightarrow \infty} t_1^k = 0, \hspace{1cm} \lim_{k \rightarrow \infty} t_2^k = T.
\end{align}
Assume $U_{t_1^k}^{i,t_2^k}\subset \mathcal{M}_{r_0^k,H_0^k,I_0^k}^{t_2^k-t_1^k,H_1^k,A_1^k}$ $ \forall k >0$ where
\begin{align}
\lim_{k \rightarrow \infty} r_0^k = r_0, \hspace{.5cm} \lim_{k \rightarrow \infty}H_0^k =\lim_{k \rightarrow \infty}I_0^k = 0 , \hspace{.5cm} \lim_{k \rightarrow \infty}H_1^k = \lim_{k \rightarrow \infty}A_1^k = \infty \label{HDegenerates}
\end{align}
and $m_H(\Sigma_{T}^i) \rightarrow 0$ as $i \rightarrow \infty$.  If we further assume that
\begin{align}
 |K^i|_{C^{0,\alpha}(\Sigma\times[t_1^k,t_2^k])} &\le C_k\label{GaussBound}
\\|Rc^i(\nu,\nu)|_{C^{0,\alpha}(\Sigma\times[t_1^k,t_2^k])} &\le C_k \label{RicciBound}
\\ |R^i|_{C^{0,\alpha}(\Sigma\times[t_1^k,t_2^k])} &\le C_k \label{ScalarBound}
\\\liminf_{i \rightarrow \infty} d_{\Sigma_0^i}(\theta_1,\theta_2) &\ge d_{S^2(r_0)}(\theta_1,\theta_2) \text{ } \forall  \theta_1,\theta_2 \in S^2 \label{distanceAssumption1}
 \\diam(\Sigma_t^i) &\le C \hspace{0.25cm} \forall i,\hspace{0.25cm} t\in [0,T] \label{diamBound}
 \\ H_i(x,t_1^k)^2 &\le \frac{4}{r_0^2} + \frac{C_k}{i} \label{HUpperBound}
 \\ \exists K \in \N, \forall k \ge K \text{ } A^i(\cdot,\cdot) &> 0 \text{ on } U_T^i \setminus (U_T^{i,k} \cup \partial U_T^i)\label{Convexity}
 \\ \exists f(t) \in L^1([0,T]) \text{ } \frac{1}{H_i(x,t)} &\le f(t) \text{ for } t \in [0,T] \label{HBlowUpRate}
 \end{align}
 where $\displaystyle \lim_{k \rightarrow \infty} C_k = \infty$ then
\begin{align}
(U_T,\hat{g}^i) &\Fto (S^2 \times [0,T],\delta).
\end{align}
\end{thm}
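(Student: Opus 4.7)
The plan is to decompose $U_T^i = U_0^{i,t_1^k} \cup U_{t_1^k}^{i,t_2^k} \cup U_{t_2^k}^{i,T}$, apply Theorem \ref{SPMT} (or Theorem \ref{SPMTLessCurv}) to the central slab for each fixed $k$, and absorb the contribution of the two boundary shells into a SWIF error term by means of the integrable bound \eqref{HBlowUpRate}. A diagonal argument in which $i \to \infty$ is taken before $k \to \infty$, combined with the triangle inequality for $d_{\mathcal F}$, will then produce the desired convergence.

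For each fixed $k$ the constants $C_k$ in \eqref{GaussBound}--\eqref{ScalarBound} are finite, and Hawking mass monotonicity along smooth IMCF with $R^i \ge 0$, together with $m_H(\Sigma_{t_1^k}^i) \ge 0$ (from the class definition) and $m_H(\Sigma_T^i) \to 0$, forces $m_H(\Sigma_{t_2^k}^i) \to 0$ as $i \to \infty$. Combined with \eqref{HUpperBound}, \eqref{diamBound}, and the distance/convexity assumptions \eqref{distanceAssumption1} and \eqref{Convexity} (which compensate for the absence of a $C^{0,\alpha}$ bound on $H_i$), the reparameterized slab $U_{t_1^k}^{i,t_2^k}$ satisfies the hypotheses of Theorem \ref{SPMT} or \ref{SPMTLessCurv}, so
\[
(U_{t_1^k}^{i,t_2^k}, \hat g^i) \Fto (S^2 \times [t_1^k,t_2^k], \delta) \quad \text{as } i \to \infty.
\]

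Next, estimate the flat distance between each full space and its central slab directly. The foliated volume element $d\mathrm{Vol}_{\hat g^i} = H_i^{-1}\, d\mu_{g^i_t}\, dt$, together with the IMCF area identity $|\Sigma_t^i| = 4\pi(r_0^k)^2 e^{t-t_1^k} \le C$ and \eqref{HBlowUpRate}, yields
\[
\mass(U_0^{i,t_1^k}) + \mass(U_{t_2^k}^{i,T}) \le C \int_{[0,t_1^k]\cup[t_2^k,T]} f(t)\, dt,
\]
which tends to $0$ as $k \to \infty$ uniformly in $i$. Embedding both the full current and its restriction into the common ambient $(U_T^i,\hat g^i)$ and realizing their difference as the shell $3$-current itself (with filling $4$-current equal to zero) bounds $d_{\mathcal F}$ between them by the above shell-mass. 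The analogous bound on the model side $(S^2\times[0,T],\delta)$ is immediate from the explicit formula for $\delta$.

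Given $\epsilon > 0$, first pick $k$ so large that both shell estimates are $< \epsilon/3$ (uniformly in $i$); then pick $i$ so large that the interior SWIF distance is $< \epsilon/3$. The triangle inequality yields $d_{\mathcal F}((U_T^i,\hat g^i),(S^2\times[0,T],\delta)) < \epsilon$, as desired. The main obstacle is ensuring the hypotheses of the interior theorem genuinely transfer to the reparameterized slab without a pointwise bound on $H_i$: in particular, the class parameters $|\Sigma_{t_1^k}^i| = 4\pi(r_0^k)^2$ and $IN_1(\Sigma_{t_1^k}^i) \ge I_0^k$ must be reconciled with the base data via the evolution, and it is here that the convexity of $A^i$ on the shells is used essentially to keep the intermediate slices non-degenerate, while \eqref{distanceAssumption1} prevents $\Sigma_0^i$ from metrically collapsing into a shape for which the interior comparison fails.
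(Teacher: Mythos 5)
Your overall architecture (apply Theorem \ref{SPMT} to the central slab, control the two shells via \eqref{HBlowUpRate}, then a triangle inequality with $k$ chosen before $i$) is the same as the paper's, which runs the decomposition through Theorem \ref{SWIFonCompactSets} together with Lemma \ref{VolumeEstimate}, Lemma \ref{DiameterEstimate} and Lemma \ref{DistancePreservingEstimate}. However, your estimate of $d_{\mathcal F}\bigl((U_T^i,\hat g^i),(U_{t_1^k}^{i,t_2^k},\hat g^i)\bigr)$ has a genuine gap. You ``embed both the full current and its restriction into the common ambient $(U_T^i,\hat g^i)$'' and conclude that the flat distance is bounded by the shell mass. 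But the SWIF distance is an infimum over \emph{metric} isometric (distance-preserving) embeddings, and the inclusion $U_{t_1^k}^{i,t_2^k}\hookrightarrow U_T^i$ is only a Riemannian isometric embedding: for $p,q$ in the slab one has $d_{U_T^{i,k}}(p,q)\ge d_{U_T^i}(p,q)$, with possibly strict inequality because a minimizing path in $U_T^i$ may dip into the shells. So the inclusion is not an admissible embedding for the flat-distance estimate, and the shell mass alone does not bound $d_{\mathcal F}$. The correct estimate (Theorem \ref{EmbeddingConstantThm}, Theorem \ref{EstimatingSWIFDistance}, and the proof of Theorem \ref{SWIFonCompactSets}) attaches a collar of height $S_i^k=\sqrt{C_i^k(\mathrm{Diam}(U_T^{i,k})+C_i^k)}$, where $C_i^k=\sup_{p,q}(d_{U_T^{i,k}}(p,q)-d_{U_T^i}(p,q))$, and this picks up the additional term $S_i^k\left(\mathrm{Vol}(U_T^{i,k})+\mathrm{Vol}(\partial U_T^{i,k})\right)$, which is small only if one proves $C_i^k\to 0$.

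Establishing $\lim_{k}\limsup_{i} C_i^k=0$ is exactly Lemma \ref{DistancePreservingEstimate}, the hardest estimate in the paper, and it is there --- not in the application of Theorem \ref{SPMT} to the slab --- that \eqref{Convexity} and \eqref{distanceAssumption1} are used: convexity of $A^i$ on the shells, combined with the geodesic equations in IMCF coordinates (Corollary \ref{IMCFgeodCor}), forces a minimizing geodesic of $U_T^i$ between slab points to have a single minimum in $t$, and \eqref{distanceAssumption1} rules out the possibility that cutting through the inner shell is genuinely shorter in the limit. Your proposal instead assigns these two hypotheses to the interior convergence, where they are not needed (Theorem \ref{SPMT} requires only the curvature, diameter and $H_i(x,\cdot)$ bounds), and therefore never confronts the embedding-constant issue. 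As written the gluing step fails; to repair it you must import Lemma \ref{DistancePreservingEstimate} (or reprove it) and route the shells' contribution through Theorem \ref{SWIFonCompactSets} rather than through a bare mass bound.
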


This result is in line with the conjecture of Lee and Sormani \cite{LS1} since it gives SWIF convergence only but now we would like to remove some of the curvature conditions of the last theorem in exchange for metric bounds on $g_i$ and so we replace \eqref{GaussBound} and \eqref{ScalarBound} with \eqref{sigmaMetricBound1}.

 \begin{thm}\label{SPMTJumpRegionWeakCurv}
Let $U_{T}^i \subset M_i^3$ be a sequence and choose $t_1^k,t_2^k \in [0,T]$, $k \in \N$ where  
\begin{align}
t_1^k < t_2^k, \hspace{1cm} \lim_{k \rightarrow \infty} t_1^k = 0, \hspace{1cm} \lim_{k \rightarrow \infty} t_2^k = T.
\end{align}
Assume $U_{t_1^k}^{i,t_2^k}\subset \mathcal{M}_{r_0^k,H_0^k,I_0^k}^{t_2^k-t_1^k,H_1^k,A_1^k}$ $ \forall k >0$ where
\begin{align}
\lim_{k \rightarrow \infty} r_0^k = r_0, \hspace{.5cm} \lim_{k \rightarrow \infty}H_0^k =\lim_{k \rightarrow \infty}I_0^k = 0 , \hspace{.5cm} \lim_{k \rightarrow \infty}H_1^k = \lim_{k \rightarrow \infty}A_1^k = \infty \label{HDegenerates}
\end{align}
and $m_H(\Sigma_{T}^i) \rightarrow 0$ as $i \rightarrow \infty$.  If we further assume that
\begin{align}
|Rc^i(\nu,\nu)|_{C^{0,\alpha}(\Sigma\times[t_1^k,t_2^k])} &\le C_k \label{RicciBound}
 \\\left(1 - \frac{C_k}{j} \right) r_0^2 e^t \sigma \le g^i(x,t) &\le C_k r_0^2 e^t \sigma \text{ } \forall (x,t) \in \Sigma\times [0,T] \label{sigmaMetricBound1}
 \\\liminf_{i \rightarrow \infty} d_{\Sigma_0^i}(\theta_1,\theta_2) &\ge d_{S^2(r_0)}(\theta_1,\theta_2) \text{ } \forall  \theta_1,\theta_2 \in S^2 \label{distanceAssumption1weakCurv}
 \\ H_i(x,t_1^k)^2 &\le \frac{4}{r_0^2} + \frac{C_k}{i} \label{HUpperBound}
 \\ \exists K \in \N, \forall k \ge K \text{ } A^i(\cdot,\cdot) &> 0 \text{ on } U_T^i \setminus (U_T^{i,k} \cup \partial U_T^i)\label{Convexity}
 \\ \exists f(t) \in L^1([0,T]) \text{ } \frac{1}{H_i(x,t)} &\le f(t) \text{ for } t \in [0,T] \label{HBlowUpRate}
 \end{align}
 where $\displaystyle \lim_{k \rightarrow \infty} C_k = \infty$ then
\begin{align}
(U_T,\hat{g}^i) &\Fto (S^2 \times [0,T],\delta).
\end{align}
\end{thm}

\begin{rmrk}\label{CompareToLeeSormani}
 One should compare this result with the result of Lee and Sormani \cite{LS1} where SWIF convergence is obtained under the assumption of rotational symmetry. In particular, notice that we allow the lower bound on mean curvature to degenerate \eqref{HDegenerates} which is what we expect when a thin gravity well develops at the center of the parameterization space. Also, notice that we allow the curvature bounds \eqref{RicciBound}, \eqref{ScalarBound} to degenerate which is why we do not expect GH convergence and which is also expected to happen when thin gravity wells develops at the boundary of the parameterization space.
 
Note that the assumption \eqref{HBlowUpRate} is to control the growth rate of the mean curvature which when combined with \eqref{diamBound} gives a bound on the diameter of $U_T$ in Lemma \ref{DiameterEstimate}. In the rotationally symmetric case this assumption appears through the choice of $D$ which is the distance which defines the tubular neighborhood $T_D(\Sigma_0)$ around the symmetric sphere $\Sigma_0$ of area $A_0$. Hence the radial distance is also assumed to be bounded in the rotationally symmetric case.

We also point out that \eqref{HUpperBound} is used to obtain the $C^0$ convergence from below assumption in Lemma \ref{C0BoundBelow} which was observed to be so important in the work of the author and Sormani \cite{BS}, in the case of warped products, in order to show that $L^2$ convergence agreed with the GH and SWIF convergence. This is related to the author's previous work where $L^2$ stability was obtained.

Lastly, \eqref{Convexity} and \eqref{distanceAssumption1} are used to control a metric approximation quantity in Lemma \ref{DistancePreservingEstimate} which appears in the work of Lakzian and Sormani \cite{LaS}, and Lakzian \cite{L} where the authors were concerned with using smooth convergence away from a singular set in order to conclude SWIF convergence on a larger set. It is important to note that \eqref{distanceAssumption1} is comparing the distances on $\Sigma_0^i$ to a sphere with the same area radius (See Proposition \ref{avgH} for details on the parameterization). There is freedom in the argument to choose any area preserving diffeomorphism of $\Sigma_0$ and $S^2(r_0)$ so implied in this condition is that a choice of area preserving diffeomorphisms is made where \eqref{distanceAssumption1} is satisfied. We discuss condition \eqref{distanceAssumption1} in some detail in Example \ref{NoGHExample}. We note that \eqref{Convexity} is just restricting the kinds of foliations by IMCF we are allowed to consider when the IMCF becomes singular and is satisfied everywhere in the rotationally symmetric case.
\end{rmrk}

\begin{rmrk}\label{WeakSolutionsIMCF}
In the author's previous work \cite{BA2} it was noted that if $\Sigma_0$ is a minimizing hull then the main result of \cite{BA2}, Theorem \ref{SPMTPrevious}, also applies to the regions between jumps of the weak formulation of Huisken and Ilmanen if we stay away from the jump times. The important part of Theorem \ref{SPMTJumpRegionSmooth} and Theorem \ref{SPMTJumpRegion}  is that they allow $t=0$ and $t=T$ to be singular times of the weak solution to IMCF, i.e. $\Sigma_0$ could be the result of a jump of a weak solution of IMCF and $\Sigma_T$ could be a surface which will jump instantly to the outward minimizing hull under the weak solution to IMCF.

In order to see this it is important to remember three important lemmas of Huisken and Ilmanen:

\begin{itemize}

\item Smooth flows satisfy the weak formulation in the domain they foliate (Lemma 2.3 \cite{HI}).

\item The weak evolution of a smooth, $H > 0$, strictly minimizing hull is smooth for a short time (Lemma 2.4 \cite{HI}).

\item It can be shown that the weak solution remains smooth until the first moment when either $\Sigma_t \not= \Sigma_t'$, $H \searrow 0$ or $|A| \nearrow \infty$ where $\Sigma_t'$ is the outward minimizing hull of $\Sigma_t$ (Remark after Lemma 2.4 \cite{HI}). 
\end{itemize}

This illustrates why it is important to allow the bounds on $H$ and $|A|$ to degenerate as the solution approaches $t=0,T$. Also, if $\Sigma_0$ is the result of a jump of a weak solution of IMCF then it is expected for $H=0$ on some portion of $\Sigma_0$ and hence it is important to allow $H=0$ on $\Sigma_0^i$ and $\Sigma_T^i$. See Example \ref{Ex WeakConvergence} for a discussion of how the results of this paper can be used in combination with weak solutions of IMCF. 
\end{rmrk}

In Section \ref{sec-U-GH-FLATBackground}, we review the definitions and important theorems for Uniform, GH and SWIF convergence. In particular, we review the compactness theorem of Huang, Lee and Sormani \cite{LS1} which is used to obtain Uniform, GH and SWIF convergence to an unspecified length space under Lipschitz bounds. We also review the work on distance preserving maps of Lee and Sormani \cite{LS1} as well as the work of Lakzian and Sormani \cite{LaS} and Lakzian \cite{L} on smooth convergence away from singular sets. To this end we prove a similar theorem to \cite{LaS}, Theorem \ref{SWIFonCompactSets}, which is useful for our context since we can use the foliation by IMCF to achieve important metric estimates in section \ref{sect-Hawk}.

In Section \ref{sect-Hawk}, we review results from the author's previous work \cite{BA2} where $L^2$ convergence was obtained under less assumptions than Theorem \ref{SPMT} and Theorem \ref{SPMTJumpRegion}. We also obtain new results on the geodesic structure of a region of a manifold foliated by IMCF as well as metric estimates which are crucial for applying the results of section \ref{sec-U-GH-FLATBackground} in section \ref{sect-IMCF}.

In Section \ref{sect-IMCF}, we will use IMCF to show how to find Uniform, GH, and Flat convergence of $\hat{g}^j$ to $\delta$. In this section we establish all of the assumptions we need to show GH convergence which culminates in Theorem \ref{IMCFConv}. Also, interesting results on geodesics on regions foliated by IMCF are established in Lemma \ref{IMCFChristoffel}, Corollary \ref{IMCFgeod} and Corollary \ref{IMCFgeodCor} which we believe could be of independent interest.

In Section \ref{sect-Stability}, we show how to prove Theorem \ref{SPMT}  using the results of the last section. In this section we see how some of the assumptions that were needed for GH convergence in the last section follow from IMCF and the stability assumptions on the Hawking mass.

In Section \ref{sec Example}, we give three examples which serve to illustrate some of the hypotheses of Theorem \ref{SPMTJumpRegion} as well as discuss the application of the author's stability theorems in combination with a further understanding of properties of weak  and strong solutions of IMCF.

\vspace{.2in}
\noindent{\bf Acknowledgements:} I would like to thank the organizers, Piotr Chrusciel, Richard Schoen, Christina Sormani, Mu-Tao Wang, and Shing-Tung Yau, for the opportunity to speak on part of this work at the Simons Center for Geometry and Physics, ``Mass in General Relativity workshop." In particular I would like to thank Christina Sormani for her constant support.

\section{Background on Uniform, GH, and SWIF Convergence}\label{sec-U-GH-FLATBackground}

In this section we review the definitions of important notions of convergence for metric spaces as well as review important prior results related to these notions that will be used in this paper. Our aim is to give a brief introduction to these concepts without technical details so we will reference sources where the reader can obtain a more complete understanding if desired.

\subsection{Uniform Convergence}\label{subsec Uniform Def}

Consider the metric spaces $(X,d_1)$, $(X,d_2)$ and define the uniform distance between them to be
\begin{align}
d_{unif}(d_1,d_2) = \sup_{x,y\in X} |d_1(x,y) - d_2(x,y)|.
\end{align}
Notice that if you think of the metrics as functions, $d_i: X\times X \rightarrow \R$, then the uniform distance $d_{unif}(d_1,d_2)$ is equivalent to the $C^0$ distance between functions. We say that a sequence of metrics spaces $(X, d_j)$ converges to the metric space $(X,d_{\infty})$ if $d_{unif}(d_j, d_{\infty}) \rightarrow 0$ as $j \rightarrow \infty$.

One limitation of uniform convergence is that it requires the metric spaces to have the same topology. In our setting this is not a problem since it was shown in the author's previous result \cite{BA2} that $U_T^i$ eventually has the topology of $S^2\times \R$ and hence this notion of convergence shows up in Theorem \ref{SPMT}. See the text of Burago, Burago, and Ivanov  \cite{BBI} for more information on uniform convergence.

\subsection{Gromov-Hausdorff Convergence}\label{subsec GH Def}
Gromov-Hausdorff convergence was introduced by Gromov in \cite{G}
and which is discussed in the text of Burago, Burago, and Ivanov \cite{BBI}. It measures a distance
between metric spaces and is more general than uniform convergence since it doesn't require the metrics spaces to have the same topology.   It is an intrinsic version of the Hausdorff distance between
sets in a common metric space $Z$ which is defined as
\begin{equation} 
d_H^Z(U_1, U_2) = \inf\{ r \, : \, U_1\subset B_r(U_2) \textrm{ and } U_2\subset B_r(U_1)\},
\end{equation} 
where $B_r(U)=\{x \in Z: \, \exists y \in U \, s.t.\, d_Z(x,y)<r\}$.
In order to define the distance between a pair of compact metric spaces, $(X_i,d_i)$, which may
not lie in the same compact metric space, we use distance preserving maps to embed both metric spaces in a common, compact metric space $Z$. A distance preserving map is defined by
\begin{equation}
\varphi_i: X_i \to Z \textrm{ such that } d_Z(\varphi_i(p), \varphi_i(q)) = d_i(p,q) \,\, \forall p,q \in X_i
\end{equation}  
where it is important to note that we are requiring a metric isometry here which is stronger than a Riemannian isometry.

The Gromov-Hausdorff distance between two compact metric spaces, $(X_i,d_i)$,
is then defined to be
\begin{equation}
d_{GH}((X_1,d_1),(X_2,d_2))=\inf\{ d_H^Z(\varphi_1(X_1), \varphi_2(X_2)) \, : \, \varphi_i: X_i\to Z\}
\end{equation}
where the infimum is taken over all compact metric spaces $Z$ and all
distance preserving maps, $\varphi_i: X_i \to Z$.

\subsection{Sormani-Wenger Intrinsic Flat Convergence}\label{subsec SWIF Def}

Gromov-Hausdorff distance between metric spaces is an extremely powerful and useful notion of distance but has been observed to be poorly suited for questions involving scalar curvature \cite{S}. To this end we now define another notion of convergence, introduced by Sormani and Wenger in \cite{SW}, which is defined on integral currents spaces.

The idea is to build an intrisic version of the Flat distance on $\R^n$ of Federrer and Fleming \cite{FF} for any metric space. If this is to be succesful one needs a notion of integral currents on metric spaces and such a current structure was introduced by Ambrosio and Kirchheim \cite{AK} which is called an integral current space. The construction of Ambrosio and Kirchheim \cite{AK} allows one to define the flat distance for currents $T_1, T_2$ of an integral current space $Z$ as follows
\begin{align}
d_F^Z(X_1,X_2) = \inf\{\mass^n(A)+\mass^{n+1}(B): A + \partial B = X_1 - X_2\}.
\end{align}

Then Sormani and Wenger \cite{SW} used this notion of flat convergence to define the intrinsic notion of convergence for integral currents spaces $M_1=(X_1,d_1,T_1)$ and $M_2=(X_2,d_2,T_2)$ in an analogous way to GH convergence. The Sormani-Wenger Intrinsic Flat (SWIF) distance is defined as
\begin{align}
d_{\mathcal{F}}(M_1,M_2) = \inf\{d_F^Z(\varphi_{1\#}T_1,\varphi_{2\#}T_2: \varphi_j X_j \rightarrow Z\}
\end{align} 
where the infimum is taken over all complete metric spaces $Z$ and all metric isometric embeddings $\varphi_j: X_j \rightarrow Z$ such that
\begin{align}
d_Z(\varphi_j(x),\varphi_j(y)) = d_{X_j}(x,y) \text{ } \forall x,y \in X_j.
\end{align}

In \cite{SW} Sormani and Wenger prove many important properties about this notion of convergence and since much work has been done to obtain a further understanding of this notion of convergence, some of which will be reviewed below. See Sormani \cite{S} for many interesting examples of SWIF convergence and its relationship to GH convergence.

\subsection{Estimating Distance Preserving Maps} \label{subsec Estimating Dist Maps}

It is important to note that a Riemannian isometry between Riemannian manifolds is weaker than a distance preserving map between the corresponding metric spaces. Often times when attempting to estimate the SWIF distance between Riemannian manifolds one would like to deduce information about a distance preserving map from a Riemannian isometry and one tool in this direction is provided by the following theorem of Lee and Sormani if you can estimate the quantity \eqref{EmbeddingConstant}. 

\begin{thm}[Lee and Sormani \cite{LS1}]\label{EmbeddingConstantThm}
Let $\varphi : M \rightarrow N$ be a Riemannian isometric embedding and let
\begin{equation}
C_M:= \sup_{p,q \in M} \left(d_M(p,q) - d_N(\varphi(p),\varphi(q))\right).\label{EmbeddingConstant}
\end{equation}
If
\begin{align}
Z=\{(x,0): x \in N\} \cup \{(x,s):x \in \varphi(M),s \in [0,S_M]\} \subset N \times [0,S_M]
\end{align}
where 
\begin{align}
S_M=\sqrt{C_M(diam(M)+C_M)}.\label{SeparationDistance}
\end{align}
Then $\psi: M \rightarrow Z$ defined as $\psi(x)=(\varphi(x),S_m)$, is an isometric embedding into $(Z,d_Z)$ where $d_Z$ is the induced length metric from the isometric product metric on $N\times[0,S_M]$.
\end{thm}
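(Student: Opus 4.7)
The goal is to show $d_Z(\psi(p),\psi(q)) = d_M(p,q)$ for every $p,q \in M$. The plan is to check the two inequalities separately. For the upper bound, I lift a near-minimizing curve from $p$ to $q$ in $M$ to the ``top'' stratum $\varphi(M)\times\{S_M\}$ via $\psi$; because $\varphi$ is a Riemannian isometric embedding and the cylinder $\varphi(M)\times[0,S_M]$ inherits the product metric, these lifted horizontal curves lie in $Z$ and have exactly the same length in $N\times[0,S_M]$ as the originals in $M$. Taking the infimum yields $d_Z(\psi(p),\psi(q)) \leq d_M(p,q)$.

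For the reverse inequality, I take an arbitrary rectifiable curve $\gamma$ from $\psi(p)$ to $\psi(q)$ in $Z$ and split into two cases. If $\gamma$ stays entirely in the cylinder $B := \varphi(M)\times[0,S_M]$, then using $\varphi$ to identify $B$ isometrically with $M\times[0,S_M]$ endowed with its product metric, the distance between $(p,S_M)$ and $(q,S_M)$ equals $d_M(p,q)$, so $L(\gamma) \geq d_M(p,q)$.

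The genuinely interesting case is when $\gamma$ dips into the base $N\times\{0\}$. Letting $t_0 \leq t_1$ be the first and last times of contact with the base and writing $\gamma(t_0)=(\varphi(p'),0)$ and $\gamma(t_1)=(\varphi(q'),0)$, the initial and final arcs stay in $B$ and contribute lengths at least $\sqrt{d_M(p,p')^2+S_M^2}$ and $\sqrt{d_M(q,q')^2+S_M^2}$, while the middle arc contributes at least $d_N(\varphi(p'),\varphi(q'))$ by projecting onto the $N$-coordinate. Combining with the triangle inequality in $N$ and the universal bound $d_N(\varphi(\cdot),\varphi(\cdot)) \leq d_M(\cdot,\cdot)$ gives
\begin{equation}
L(\gamma) \;\geq\; d_N(\varphi(p),\varphi(q)) \;+\; f\bigl(d_M(p,p')\bigr) \;+\; f\bigl(d_M(q,q')\bigr),
\end{equation}
where $f(x) := \sqrt{x^2+S_M^2} - x$ is monotonically decreasing on $[0,\infty)$.

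The main obstacle, and the point at which the specific value of $S_M$ enters crucially, is showing that the ``penalty'' for dipping to the base is at least $C_M$. Since $f$ is decreasing and $d_M(p,p'),d_M(q,q') \leq diam(M)$, one has $f(d_M(\cdot,\cdot)) \geq f(diam(M))$, and the desired inequality $2f(diam(M)) \geq C_M$ reduces by squaring to $S_M^2 \geq C_M\,diam(M) + C_M^2/4$, which is satisfied (with room to spare) by $S_M^2 = C_M(diam(M)+C_M)$. This yields $L(\gamma) \geq d_N(\varphi(p),\varphi(q)) + C_M \geq d_M(p,q)$ by the very definition of $C_M$, completing the lower bound and hence the proof.
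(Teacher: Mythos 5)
This theorem is quoted from Lee and Sormani \cite{LS1} and the paper gives no proof of it, so there is nothing internal to compare against; judged on its own, your argument is correct and is essentially the original Lee--Sormani argument. The upper bound via horizontal lifting, the trichotomy for a competitor path (stays in the cylinder versus descends to the base), the penalty $2\bigl(\sqrt{\mathrm{diam}(M)^2+S_M^2}-\mathrm{diam}(M)\bigr)\ge C_M$ extracted from the choice of $S_M$, and the final appeal to the definition of $C_M$ all check out. The only points worth a clarifying word are that the endpoints $\varphi(p'),\varphi(q')$ of the descending arcs lie in $\varphi(M)\times\{0\}$ because $\varphi(M)$ is closed (here $M$ is compact), and that the identification of the cylinder $\varphi(M)\times[0,S_M]$ with $M\times[0,S_M]$ preserves lengths of curves (hence the induced length metric), not the restricted ambient distance --- which is exactly what your lower bounds require.
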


We will use this result in subsection \ref{subsec SWIF on Exhaustion} where the constant \eqref{EmbeddingConstant} appears in \eqref{EmbeddingConstantToZero}.

\subsection{Estimating SWIF Distance} \label{subsec Estimating SWIF Dist}

With Theorem \ref{EmbeddingConstantThm} in mind Lee and Sormani were able to give an important estimate of the SWIF distance between Riemannian manifolds.

\begin{thm}[Lee and Sormani \cite{LS1}]\label{EstimatingSWIFDistance}
If $\varphi_i: M_i^n \rightarrow N^{n+1}$ are Riemannian isometric embeddings with embedding constants $C_{M_i}$ as in \eqref{EmbeddingConstant}, and if they are disjoint and lie in the boundary of a region $W \subset N$ then
\begin{align}
d_{\mathcal{F}}(M_1,M_2) &\le S_{M_1}(Vol_n(M_1) + Vol_{n-1}(\partial M_1))
\\&+ S_{M_2}(Vol(M_2) + Vol_{n-1}(\partial M_2))
\\&+Vol_{n+1}(W) + Vol_n(V)
\end{align}
where $V = \partial W \setminus(\varphi(M_1)\cup \varphi(M_2))$ and $S_{M_i}$ are defined in \eqref{SeparationDistance}.
\end{thm}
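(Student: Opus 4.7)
The plan is to exhibit a single complete metric space $Z$ into which both $M_1$ and $M_2$ embed in a genuinely distance-preserving way, together with explicit integral currents $A$ and $B$ in $Z$ satisfying $A + \partial B = \psi_{1\#} M_1 - \psi_{2\#} M_2$, whose masses can be read directly off the geometry and summed to give the stated bound. First I would construct $Z$ by attaching to $N$ two \emph{pillars} $P_i := \varphi_i(M_i) \times [0, S_{M_i}]$, glued along the identification of $\varphi_i(M_i)$ with $\varphi_i(M_i) \times \{0\}$, and equip $Z$ with the induced length metric coming from the isometric product structure on each $N \times [0,S_{M_i}]$. Because $\varphi_1(M_1)$ and $\varphi_2(M_2)$ are disjoint in $\partial W$, the pillars can be attached independently, and Theorem \ref{EmbeddingConstantThm} applied separately to each one guarantees that the maps $\psi_i(x) := (\varphi_i(x), S_{M_i})$ are honest metric (not merely Riemannian) isometric embeddings $M_i \hookrightarrow Z$, which is exactly what is needed for $d_F^Z$ to yield an upper bound on $d_{\mathcal{F}}(M_1, M_2)$.

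Next I would set $B := W + P_1 - P_2$ as an integral $(n{+}1)$-current in $Z$, orienting $W$ so that at height zero $\partial W = \varphi_{1\#} M_1 - \varphi_{2\#} M_2 - V$, and orienting each pillar $P_i$ so that
\begin{equation*}
\partial P_i \;=\; \psi_{i\#} M_i \;-\; \varphi_{i\#} M_i \;-\; L_i, \qquad L_i := \partial M_i \times [0, S_{M_i}].
\end{equation*}
With these orientation conventions the ``bottom'' contributions $\varphi_{i\#} M_i$ from the pillars cancel the corresponding pieces in $\partial W$, leaving $\partial B = \psi_{1\#} M_1 - \psi_{2\#} M_2 - V - L_1 + L_2$. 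Setting $A := V + L_1 - L_2$ as an integral $n$-current in $Z$ then gives $A + \partial B = \psi_{1\#} M_1 - \psi_{2\#} M_2$, and so by definition of the flat norm
\begin{equation*}
d_{\mathcal{F}}(M_1,M_2) \;\le\; d_F^Z(\psi_{1\#}M_1, \psi_{2\#}M_2) \;\le\; \mass(A) + \mass(B).
\end{equation*}

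Finally, the masses come directly from the product structure of the pillars: $\mass(P_i) = S_{M_i} Vol_n(M_i)$ and $\mass(L_i) = S_{M_i} Vol_{n-1}(\partial M_i)$, while $\mass(W) = Vol_{n+1}(W)$ and $\mass(V) = Vol_n(V)$. Adding these and taking the infimum over isometric embeddings in the SWIF definition (which can only decrease the right-hand side) yields the inequality as stated. The main obstacle I expect is not any deep analysis but rather the careful bookkeeping of orientations required to guarantee that the interior bottom-and-lateral terms cancel on the nose in $A + \partial B$, together with verifying that gluing the two pillars to $N$ simultaneously does not disrupt the length metric witnessing $\psi_i$ as a genuine metric isometry — which works precisely because the disjointness of $\varphi_1(M_1)$ and $\varphi_2(M_2)$ allows Theorem \ref{EmbeddingConstantThm} to be applied to each pillar independently.
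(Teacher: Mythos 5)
Your proposal is correct and is essentially the standard Lee--Sormani filling argument: the paper itself quotes Theorem \ref{EstimatingSWIFDistance} from \cite{LS1} without proof, but reproduces exactly your construction (pillars $\varphi_i(M_i)\times[0,S_{M_i}]$ glued to $N$, with $B$ the filling current and $A$ the leftover bottom and lateral boundary pieces so that $A+\partial B=\psi_{1\#}M_1-\psi_{2\#}M_2$) in its proof of Theorem \ref{SWIFonCompactSets}. Your mass bookkeeping $\mass(P_i)=S_{M_i}Vol_n(M_i)$, $\mass(L_i)=S_{M_i}Vol_{n-1}(\partial M_i)$, $\mass(W)=Vol_{n+1}(W)$, $\mass(V)=Vol_n(V)$ reproduces the stated estimate, and your observation that disjointness of the images lets Theorem \ref{EmbeddingConstantThm} be applied to each pillar independently is the right justification that $\psi_i$ remain metric isometric embeddings in the combined space.
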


We will use this result in the proof of Theorem \ref{SWIFonCompactSets}.

\subsection{Key Compactness Theorem} \label{subsec Compactness Thm}

An important compactness theorem was introduced by Wenger \cite{W}, Wenger's Compactness Theorem, which says that given an integral current space $M=(X,d,T)$ where
\begin{align}
Vol(M) \le V_0 \hspace{1cm} Vol(\partial M) \le A_0 \hspace{1cm} Diam(M) \le M_0
\end{align}
then a subsequence exists which converges in the SWIF sense to an integral current space which could be the zero space. This compactness theorem is important to understanding SWIF convergence and can be very useful in applications. In the case of Theorem \ref{SPMT} we expect to get uniform, GH and SWIF convergence and hence it is advantageous to have a compactness theorem which guarantees that all three of these notions of convergence agree. In the paper by Huang, Lee and Sormani \cite{HLS} on stability of the PMT for graphs a similar compactness theorem was needed, and hence proven, which we review here.

\begin{thm}[Huang, Lee and Sormani \cite{HLS}]\label{HLS-thm}
Fix a precompact $n$-dimensional integral current space $(X, d_0, T)$
without boundary (e.g. $\partial T=0$) and fix
$\lambda>0$.   Suppose that
$d_j$ are metrics on $X$ such that
\begin{align}\label{d_j}
\lambda \ge \frac{d_j(p,q)}{d_0(p,q)} \ge \frac{1}{\lambda}.
\end{align}
Then there exists a subsequence, also denoted $d_j$,
and a length metric $d_\infty$ satisfying (\ref{d_j}) such that
$d_j$ converges uniformly to $d_\infty$
\begin{align}\label{epsj}
\epsilon_j= \sup\left\{|d_j(p,q)-d_\infty(p,q)|:\,\, p,q\in X\right\} \to 0.
\end{align}
Furthermore
\begin{align}\label{GHjlim}
\lim_{j\to \infty} d_{GH}\left((X, d_j), (X, d_\infty)\right) =0
\end{align}
and
\begin{align}\label{Fjlim}
\lim_{j\to \infty} d_{\mathcal{F}}\left((X, d_j,T), (X, d_\infty,T)\right) =0.
\end{align}
In particular, $(X, d_\infty, T)$ is an integral current space
and $\set(T)=X$ so there are no disappearing sequences of
points $x_j\in (X, d_j)$.

In fact we have
\begin{align}\label{GHj}
d_{GH}\left((X, d_j), (X, d_\infty)\right) \le 2\epsilon_j
\end{align}
and 
\begin{align}\label{Fj}
d_{\mathcal{F}}\left((X, d_j, T), (X, d_\infty, T)\right) \le
2^{(n+1)/2} \lambda^{n+1} 2\epsilon_j \mass_{(X,d_0)}(T).
\end{align}
\end{thm}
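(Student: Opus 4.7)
\emph{Extracting the uniform limit.} Viewing each $d_j$ as a real-valued function on $X \times X$, the bilipschitz bound (\ref{d_j}) supplies a uniform upper bound $d_j \le \lambda \operatorname{diam}_{d_0}(X)$ together with
\[
|d_j(p,q)-d_j(p',q')| \le d_j(p,p')+d_j(q,q') \le \lambda\bigl(d_0(p,p')+d_0(q,q')\bigr),
\]
giving equicontinuity on the precompact product $(X,d_0)\times(X,d_0)$. Arzelà--Ascoli then yields a subsequence, again denoted $d_j$, converging uniformly to a continuous limit $d_\infty$. Symmetry, the triangle inequality, and the two-sided bound (\ref{d_j}) pass to the pointwise limit, and the lower bound $d_\infty(p,q)\ge d_0(p,q)/\lambda$ gives definiteness, rules out disappearing sequences of points, and forces $\operatorname{set}(T)=X$. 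That $d_\infty$ is a length metric is inherited from the length metrics $d_j$ by the standard fact that a uniform limit of length metrics on a compact space is again a length metric.

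\emph{Gromov-Hausdorff estimate.} For (\ref{GHj}) I would put a pseudometric on $X \sqcup X$ using $d_j$ on the first copy, $d_\infty$ on the second, and cross distances $d_\infty(x,y)+\epsilon_j$; the triangle inequality holds because $|d_j-d_\infty| \le \epsilon_j$. Both copies embed distance-preservingly and each sits in the $\epsilon_j$-neighborhood of the other, so the Hausdorff distance is at most $\epsilon_j$ and, after symmetrizing, $d_{GH}\bigl((X,d_j),(X,d_\infty)\bigr) \le 2\epsilon_j$, which is (\ref{GHj}).

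\emph{Intrinsic flat estimate via a thin slab.} For (\ref{Fj}) I would adapt the hemispherical slab behind Theorem \ref{EmbeddingConstantThm}: realize both metric spaces as slices $X \times \{0\}$ and $X \times \{h_j\}$ of a slab $Z = X \times [0,h_j]$, equipped with the length metric generated by an $\ell^2$ combination of an auxiliary slice metric and the height $s$, with $h_j$ of order $\epsilon_j$ tuned so that the slice maps $\varphi_\infty(x)=(x,0)$ and $\varphi_j(x)=(x,h_j)$ are simultaneously distance-preserving. Since $\partial T = 0$, the slab current $B = \Phi_\#(T \times [0,h_j])$ satisfies $\partial B = \varphi_{j\#}T - \varphi_{\infty\#}T$, so taking $A = 0$ in the flat-distance infimum gives $d_\mathcal{F} \le \mass(B)$. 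Computing the mass of $B$ acquires a factor $\lambda^{n+1}$ when $n$-dimensional mass is converted between $d_0$ and either $d_j$ or $d_\infty$ (the bilipschitz constant contributes $\lambda^n$ in the slice directions and one extra $\lambda$ in the height direction), a factor $\epsilon_j$ from the slab thickness $h_j$, and a factor $2^{(n+1)/2}$ from the $\ell^2$ combination, yielding the stated bound. I expect the main obstacle to be engineering the slab so that both slice maps are distance-preserving (not merely bilipschitz) while the filling mass stays linear in $\epsilon_j$: a naive product metric fails the isometry condition on whichever slice metric is locally larger, and overcorrecting inflates the slab volume, so the hemispherical balancing of Theorem \ref{EmbeddingConstantThm}, transported from the Riemannian to the purely metric setting, is the essential ingredient.
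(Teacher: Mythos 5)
First, note that the paper offers no proof of this statement: it is quoted from Huang, Lee and Sormani \cite{HLS} and used as a black box, so your attempt can only be measured against the original argument in \cite{HLS}. Your first paragraph is correct and matches that argument: \eqref{d_j} gives uniform boundedness and $\lambda$-Lipschitz equicontinuity of the functions $d_j$ on $(X,d_0)\times(X,d_0)$, Arzel\`a--Ascoli produces the uniform subsequential limit, and the metric axioms, the bounds \eqref{d_j}, and $\set(T)=X$ pass to the limit. (Two small caveats: the conclusion that $d_\infty$ is a length metric tacitly requires the $d_j$ to be length metrics, a hypothesis present in \cite{HLS} but dropped in the statement as transcribed here, and the uniform-limit-of-length-metrics fact needs completeness, which holds after passing to the completion of the precompact $X$.) Your Gromov--Hausdorff gluing, however, fails the triangle inequality as written: with cross-distance $d_\infty(x,y)+\epsilon_j$ one needs $d_\infty(x,y)\le d_j(x,z)+d_\infty(z,y)$ for all $z$, and taking $z=y$ this requires $d_\infty(x,y)\le d_j(x,y)$, which is false whenever $d_j(x,y)<d_\infty(x,y)$. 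The standard repair is the cross-distance $\inf_z\{d_j(x,z)+\epsilon_j+d_\infty(z,y)\}$, which is a genuine metric on $X\sqcup X$, makes both inclusions distance-preserving, and places each copy in the $\epsilon_j$-neighborhood of the other, giving \eqref{GHj} with room to spare.

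The genuine gap is \eqref{Fj}. The entire content of that estimate is the construction of a complete metric space $Z$ carrying distance-preserving images of \emph{both} $(X,d_j)$ and $(X,d_\infty)$ together with an integral $(n+1)$-current $B$ in $Z$ satisfying $\partial B=\varphi_{j\#}T-\varphi_{\infty\#}T$ and $\mass(B)$ of order $\epsilon_j\lambda^{n+1}\mass_{(X,d_0)}(T)$, and this is exactly the step you defer. Theorem \ref{EmbeddingConstantThm} cannot simply be transported: it takes as input a Riemannian isometric embedding, i.e.\ a situation where one metric dominates the other along a fixed map, whereas here neither $d_j$ nor $d_\infty$ dominates; that is precisely why, as you observe, a naive product slab fails on whichever slice is locally larger, but you offer no substitute. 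The resolution is not a slab at all: one takes the doubled space $X\sqcup X$ with the corrected cross-distance above (so that both embeddings are distance-preserving by construction), pushes it into a Banach space by a Kuratowski-type embedding, and fills with the affine homotopy $h(x,t)=(1-t)\varphi_{\infty}(x)+t\varphi_{j}(x)$; the Ambrosio--Kirchheim homotopy formula \cite{AK} then gives $\partial\bigl(h_{\#}(T\times[0,1])\bigr)=\varphi_{j\#}T-\varphi_{\infty\#}T$ (using $\partial T=0$) and bounds the mass of the filling by a dimensional constant times the separation $\sup_x\|\varphi_j(x)-\varphi_\infty(x)\|\le\epsilon_j$ and powers of the Lipschitz constants controlled by \eqref{d_j}, which is the source of the factors $2\epsilon_j$, $\lambda^{n+1}$ and $2^{(n+1)/2}$ in \eqref{Fj}. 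Without this (or an equivalent) construction, the third part of your proposal is a plan rather than a proof.
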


We will specifically use this theorem in the proof of Theorem \ref{IMCFConv} to get a subsequence which converges to a length metric. Then in order to identify this length metric as Euclidean space we will show pointwise convergence of distances in Corollary \ref{limsupEst} and Corollary \ref{distLowerBound2}.

\subsection{Contrasting $L^2$, GH and SWIF Convergence} \label{subsec ContastingConvergence}
In applications where one expects SWIF convergence for a sequence of Riemannian manifolds it has been noticed that one often obtains $L^2$ convergence or $W^{1,2}$ convergence more immediately (See \cite{BA2,BAetal, Br}). This motivated the author and Christina Sormani to investigate the connections between $L^2$ convergence and SWIF convergence in \cite{BS} where we proved the following theorem for warped products.

\begin{thm}[BA and Sormani \cite{BS}]\label{WarpConv} 
  Consider the warped product manifolds $M^n=[r_0,r_1]\times_{f_i} \Sigma$, where $\Sigma$ is an $n-1$ dimensional manifold. Assume the warping factors, $f_j\in C^0(r_0,r_1)$ , satisfy the following:
  \begin{equation}
 0<f_{\infty}(r)-\frac{1}{j} \le f_j(r) \le K < \infty 
  \end{equation}
and
  \begin{equation}
  f_j(r) \rightarrow f_{\infty}(r)> 0 \textrm{ in }L^2
  \end{equation}
  where $f_\infty \in C^0(r_0,r_1)$.
  
  Then we have GH and $\mathcal{F}$ convergence of the 
  warped product manifolds,
  \begin{equation}
  M_j=[r_0,r_1]\times_{f_j} \Sigma \to   M_\infty=[r_0,r_1]\times_{f_\infty} \Sigma,
  \end{equation}
  and uniform convergence of their distance functions, $d_ j \to d_\infty$.
 \end{thm}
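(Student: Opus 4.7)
The strategy is to prove uniform convergence $d_j \to d_\infty$ first, and then invoke Theorem \ref{HLS-thm} to promote this to the GH and SWIF convergence statements. Continuity and strict positivity of $f_\infty$ on the compact interval $[r_0,r_1]$ provide $\delta > 0$ with $f_\infty \ge \delta$, so the hypothesis $f_j \ge f_\infty - 1/j$ combined with $f_j \le K$ yields $\delta/2 \le f_j \le K$ once $j$ is large. This gives uniform bi-Lipschitz equivalence of the Riemannian metrics $g_j = dr^2 + f_j^2 g_\Sigma$ against the reference $g_0 = dr^2 + g_\Sigma$, hence bi-Lipschitz equivalence of the induced distances $d_j$ against $d_0$, which is the hypothesis \eqref{d_j} of Theorem \ref{HLS-thm}.

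For the lower bound $\liminf_j d_j(p,q) \ge d_\infty(p,q)$, any Lipschitz curve $\gamma = (r(t),x(t))$ joining $p$ and $q$ satisfies
\begin{align*}
L_j(\gamma) = \int\sqrt{(r')^2 + f_j(r)^2 |x'|^2_\Sigma}\,dt \ge \int\sqrt{(r')^2 + (f_\infty(r)-1/j)^2|x'|^2_\Sigma}\,dt,
\end{align*}
so taking the infimum gives $d_j(p,q) \ge d_{f_\infty - 1/j}(p,q)$. Since $f_\infty - 1/j \to f_\infty$ uniformly and distances in warped products depend continuously on uniformly small perturbations of the warping factor, one has $d_{f_\infty - 1/j} \to d_\infty$ uniformly, delivering the lower bound.

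For the upper bound $\limsup_j d_j(p,q) \le d_\infty(p,q)$, fix $\epsilon > 0$ and choose a piecewise smooth near-minimizer $\gamma$ in $M_\infty$ with $L_\infty(\gamma) < d_\infty(p,q) + \epsilon$, which after a small radial perturbation (of length cost at most $\epsilon$, estimated in $M_j$ using $f_j \le K$) can be arranged so that the radial component $r: [0,1] \to [r_0,r_1]$ is piecewise strictly monotone. Strict monotonicity ensures that null sets in $[r_0,r_1]$ pull back to null sets in $[0,1]$, so the a.e.\ convergence $f_{j_k}(r) \to f_\infty(r)$ extracted from any $L^2$ subsequence transfers to a.e.\ convergence $f_{j_k}(r(t)) \to f_\infty(r(t))$ on $[0,1]$. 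With the integrable dominating function $\sqrt{(r')^2 + K^2|x'|^2_\Sigma}$, the dominated convergence theorem yields $L_{j_k}(\gamma) \to L_\infty(\gamma)$, and a standard subsubsequence argument promotes this to full-sequence convergence. Hence $\limsup_j d_j(p,q) \le L_\infty(\gamma) \le d_\infty(p,q) + 2\epsilon$, and letting $\epsilon \to 0$ completes the upper bound.

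Combining the two inequalities yields pointwise convergence $d_j(p,q) \to d_\infty(p,q)$, which upgrades to uniform convergence on the compact space $X \times X$ via the uniform bi-Lipschitz equicontinuity of $\{d_j\}$. Theorem \ref{HLS-thm} then delivers $(X, d_j) \to (X, d_\infty)$ in the GH and $\mathcal{F}$ senses via \eqref{GHjlim} and \eqref{Fjlim}. The main obstacle is the radial perturbation step: when the near-minimizer moves purely along $\Sigma$ at fixed radius on some interval, one must introduce a small radial zig-zag while verifying the added length is uniformly $O(\epsilon)$ in both $M_\infty$ and every $M_j$, which is exactly where the upper bound $f_j \le K$ becomes essential.
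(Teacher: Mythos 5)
The paper does not actually prove Theorem \ref{WarpConv}; it is imported from \cite{BS}, and the closest thing here to its proof is the machinery of Section \ref{sect-IMCF} (Lemmas \ref{IMCF-L}, \ref{LineApprox}, \ref{distLowerBound}, \ref{distLowerBoundSigma} and Theorem \ref{IMCFConv}), which the author states was modeled on that argument. Your overall architecture --- uniform bi-Lipschitz bounds feeding Theorem \ref{HLS-thm}, a $\liminf$ inequality extracted from the $C^0$ lower bound $f_j \ge f_\infty - 1/j$, a $\limsup$ inequality extracted from the $L^2$ convergence, and identification of the subsequential limit by pointwise convergence of distances --- matches that template. Where you genuinely diverge is the $\limsup$ step: the paper's route bounds $|L_j(\gamma)-L_\infty(\gamma)|$ by the $L^2$ norm of the warping-factor difference \emph{restricted to the curve} via H\"older (Lemma \ref{IMCF-L}) and then runs a Fubini argument over a family of parallel curves to locate a nearby competitor on which that restricted norm actually tends to zero (Lemma \ref{LineApprox}); you instead stay on (a perturbation of) the original near-minimizer and transfer the $L^2$ convergence to a.e.\ convergence along it, closing with dominated convergence and a sub-subsequence argument. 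Both routes confront the same difficulty --- $L^2$ convergence on $[r_0,r_1]$ controls nothing along a fixed one-dimensional curve --- and yours is a legitimate alternative that avoids the parallel-family construction.

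There is, however, one incorrect claim in your transfer step: it is not true that strict monotonicity of $r$ forces null sets of $[r_0,r_1]$ to pull back to null sets of $[0,1]$. If $h(s)=s+c(s)$ with $c$ the Cantor function, then $r=h^{-1}$ is strictly increasing and $1$-Lipschitz, yet it pulls the null Cantor set back to a set of measure one; so ``piecewise strictly monotone'' is not enough, and as written your dominated convergence argument could fail to see the a.e.\ convergence on a positive-measure set of parameters. The repair is easy and should be stated: arrange the competitor so that its radial component is piecewise affine with nonvanishing slope (any Lipschitz curve in these warped products can be length-approximated by such curves, and your zig-zag perturbation naturally produces them); then $r$ is bi-Lipschitz onto its image on each piece, preimages of null sets are null, and the rest of your argument --- domination by $\sqrt{(r')^2+K^2|x'|_\Sigma^2}$, the sub-subsequence trick, the equicontinuity upgrade to uniform convergence, and the application of Theorem \ref{HLS-thm} --- goes through. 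With that one correction the proof is sound.
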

 
 This theorem strictly speaking does not apply to this setting since we are not dealing with warped products but it should be noted that the insight gained by working on this paper has informed many of the proofs in section \ref{sect-IMCF}. In the beginning of section \ref{sect-IMCF} we give a discussion of how Theorem \ref{WarpConv} is related to the proof of Theorem \ref{SPMT}.

\subsection{SWIF Convergence on Exhaustion of Sets} \label{subsec SWIF on Exhaustion}

One important way to estimate the SWIF distance between Riemannian manifolds is when one has smooth convergence away from a singular set which was developed by Lakzian and Sormani \cite{LaS} and Lakzian \cite{L}. Lakzian and Sormani give conditions which if satisfied in conjunction with smooth convergence away from a singular set imply SWIF convergence. 

In the case of Theorem \ref{SPMTJumpRegion} we do not want to assume smooth convergence on the precompact exhaustion but rather we would like to use Theorem \ref{SPMT} and hence we are free to assume GH and SWIF convergence on the exhaustion. With this in mind we will have to add more assumptions to Theorem \ref{ConvergenceAwayFromSingularSets} in our case but we also have the added benefit of being able to leverage IMCF in order to satisfy these additional assumptions. With this in mind we state the following theorem which gives a way of using SWIF convergence on an exhaustion to conclude SWIF convergence on the larger set which will be used to prove Theorem \ref{SPMTJumpRegion}.

Let $\Sigma^n$ be a manifold and define the Riemannian manifolds
\begin{align}
M_i^k&= (\Sigma\times[t_1^k,t_2^k],g_i) \subset M_i=(\Sigma \times [0,T],g_i)
\\M_0^k &= (\Sigma\times[t_1^k,t_2^k],g_0) \subset M_0=(\Sigma \times [0,T],g_0)
\end{align}
where $0 < t_1^k < t_2^k <T < \infty$, $t_1^k \rightarrow 0, t_2^k \rightarrow T$ as $k \rightarrow \infty$. 

\begin{thm}\label{SWIFonCompactSets}
If 
\begin{align}
d_{\mathcal{F}}(M_i^k,M_0^k) \rightarrow 0 \text{ as } i \rightarrow \infty \text{ } \forall k \in \N,  
\end{align}
and
\begin{align}
Vol(\partial M_0^k)&\le A_0, \hspace{1cm} Vol(\partial M_i^k)\le A_1,
\\Vol(M_0)&= V_0, \hspace{1cm} Vol(M_i) \le V_1
\\ Diam(M_0) &= D_0, \hspace{1cm} Diam(M_i) \le D_1,
\\ \limsup_{i\rightarrow \infty}Vol(M_i\setminus M_i^k)& = \gamma_k \hspace{1cm} \lim_{k\rightarrow \infty}\gamma_k = 0
\\ \limsup_{i\rightarrow \infty}\sup_{p,q \in M_i^k}(d_{M_i^k}(p,q) - d_{M_i}(p,q))& = \beta_k \hspace{1cm} \lim_{k\rightarrow \infty}\beta_k = 0 \label{EmbeddingConstantToZero}
\end{align}
then 
\begin{align}
d_{\mathcal{F}}(M_i,M_0) \rightarrow 0.
\end{align}
\end{thm}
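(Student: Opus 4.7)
The plan is to use the triangle inequality
\begin{align*}
d_{\mathcal{F}}(M_i, M_0) \le d_{\mathcal{F}}(M_i, M_i^k) + d_{\mathcal{F}}(M_i^k, M_0^k) + d_{\mathcal{F}}(M_0^k, M_0),
\end{align*}
where the middle term vanishes as $i \to \infty$ for each fixed $k$ by hypothesis. The main task is thus to bound the outer two terms by quantities that tend to zero as $k \to \infty$, uniformly for $i$ large, and then conclude via a standard diagonal $\epsilon/3$ argument.

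For $d_{\mathcal{F}}(M_i, M_i^k)$, I would apply Theorems \ref{EmbeddingConstantThm} and \ref{EstimatingSWIFDistance} to the natural inclusion $\iota: M_i^k \hookrightarrow M_i$, which is a Riemannian isometric embedding with embedding constant
\begin{align*}
C^{(i)}_k = \sup_{p, q \in M_i^k}\bigl( d_{M_i^k}(p, q) - d_{M_i}(p, q) \bigr)
\end{align*}
satisfying $\limsup_{i \to \infty} C^{(i)}_k \le \beta_k$ by \eqref{EmbeddingConstantToZero}. Take the ambient Riemannian manifold $\tilde N = M_i \times [0, S]$ with product metric for arbitrary $S > 0$; then $\varphi_1(x) = (x, 0)$ and $\varphi_2(x) = (x, S)$ are Riemannian isometric embeddings of $M_i$ and $M_i^k$ into $\tilde N$, respectively, disjoint and lying in $\partial W$ for $W = \tilde N$. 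The first is in fact distance-preserving so $C_{M_i} = 0$, while the second has embedding constant exactly $C^{(i)}_k$. The complement $V = \partial W \setminus (\varphi_1(M_i) \cup \varphi_2(M_i^k))$ decomposes as $(M_i \setminus M_i^k) \times \{S\} \cup \partial M_i \times [0, S]$, so Theorem \ref{EstimatingSWIFDistance} gives
\begin{align*}
d_{\mathcal{F}}(M_i, M_i^k) \le S^{(i)}_k (V_1 + A_1) + S \cdot V_1 + \mathrm{Vol}(M_i \setminus M_i^k) + S \cdot \mathrm{Vol}(\partial M_i),
\end{align*}
where $S^{(i)}_k = \sqrt{C^{(i)}_k(D_1 + C^{(i)}_k)}$ as in \eqref{SeparationDistance}. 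Letting $S \to 0^+$ (the bound holds for any $S > 0$) and then taking $\limsup_{i \to \infty}$ yields
\begin{align*}
\limsup_{i \to \infty} d_{\mathcal{F}}(M_i, M_i^k) \le \sqrt{\beta_k(D_1 + \beta_k)}(V_1 + A_1) + \gamma_k,
\end{align*}
which tends to zero as $k \to \infty$; here I use $\mathrm{Vol}(\partial M_i) \le A_1$, inherited by continuity in $k$ from the assumed bound $\mathrm{Vol}(\partial M_i^k) \le A_1$.

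The estimate for $d_{\mathcal{F}}(M_0^k, M_0)$ follows by the same construction applied to the fixed limit manifold: $M_0^k$ exhausts $M_0$ with $\mathrm{Vol}(M_0 \setminus M_0^k) \to 0$ since $V_0 < \infty$, and the distance distortion $\sup_{p,q \in M_0^k}(d_{M_0^k}(p,q) - d_{M_0}(p,q))$ tends to zero as $k \to \infty$ (either by direct computation on $M_0$, or as a consequence of the distortion bound for $M_i$ combined with the convergence $M_i^k \to M_0^k$ for each $k$). Combining the three bounds with the $\epsilon/3$ argument---choose $k$ large first to kill the outer terms, then $i$ large to kill the middle term---gives $d_{\mathcal{F}}(M_i, M_0) \to 0$.

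The principal technical obstacle is the correct geometric setup in Theorem \ref{EstimatingSWIFDistance}: one must identify an ambient Riemannian manifold in which both $M_i$ and $M_i^k$ sit as disjoint Riemannian isometric embeddings whose union forms most of $\partial W$ with controllable side volume $V$, and then verify that the infimum over the free parameter $S > 0$ eliminates the terms linear in $S$, leaving only the $S^{(i)}_k$ contribution, which vanishes as $k \to \infty$ via the hypothesis $\beta_k \to 0$.
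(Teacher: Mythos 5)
Your proposal is correct and follows essentially the same route as the paper: the same triangle inequality decomposition, with the outer terms $d_{\mathcal{F}}(M_i,M_i^k)$ and $d_{\mathcal{F}}(M_0^k,M_0)$ controlled via the Lee--Sormani embedding-constant construction of Theorems \ref{EmbeddingConstantThm} and \ref{EstimatingSWIFDistance}, yielding the identical bound $S_i^k(\mathrm{Vol}(M_i^k)+\mathrm{Vol}(\partial M_i^k))+\mathrm{Vol}(M_i\setminus M_i^k)$ before taking $\limsup_{i}$ and then $k\to\infty$. The only cosmetic difference is that you invoke Theorem \ref{EstimatingSWIFDistance} as a black box on $M_i\times[0,S]$ and let $S\to 0^+$, whereas the paper directly builds the filling current $B_i^k$ and excess boundary $A_i^k$ inside the space $Z_i^k$ of Theorem \ref{EmbeddingConstantThm}; the resulting estimates coincide.
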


\begin{proof}
By the triangle inequality
\begin{align}
d_{\mathcal{F}}(M_i,M_0) \le d_{\mathcal{F}}(M_i,M_i^k)+d_{\mathcal{F}}(M_i^k,M_0^k)+d_{\mathcal{F}}(M_0^k,M_0)
\end{align}
where we have assumed that $d_{\mathcal{F}}(M_i^k,M_0^k) \rightarrow 0$ and hence we are left to estimate the other two terms. We note that it will be sufficient to make the argument for $M_i^k$ and $M_i$.

Consider the construction of Theorem \ref{EmbeddingConstantThm}
\begin{align}
Z_i^k &=\left (\{(x,0):x \in M_i\} \cup \{ (x,s): x \in M_i^k, s \in [0,S_i^k]\} \right)
\\&\subset M_i \times [0,S_i^k]
\end{align}
where 
\begin{align}
C_i^k &=\sup_{p,q \in M_i^k}(d_{M_i^k}(p,q) - d_{M_i}(p,q))
\\S_i^k &= \sqrt{C_i^k(Diam(M_i^k) + C_i^k)}.
\end{align}
 Now define the distance preserving maps of Theorem \ref{EmbeddingConstantThm} $\varphi_i^k:M_i^k \rightarrow Z_i^k$ and $\varphi_i:M_i\rightarrow Z_i^k$ where 
\begin{align}
\varphi_i^k(x,t) = (x,t,S_i^k) \hspace{1 cm} \varphi_i(x,t) = (x,t,0).
\end{align}
We can construct an $A_i^k,B_i^k \subset Z_i^k$, in a similar way to Lee and Sormani \cite{LS1} Theorem \ref{EstimatingSWIFDistance}, so that
\begin{align}
\varphi_i^k(M_i^k) - \varphi_i(M_i) = \partial B_i^k + A_i^k
\end{align}
by setting
\begin{align}
B_i^k &= \{ (x,s): x \in M_i^k, s \in [0,S_i^k]\}
\\A_i^k&= \{(x,0):x \in M_i\setminus M_i^k\} \cup \{ (x,s): x \in \partial M_i^k, s \in [0,S_i^k]\}.
\end{align}
From the construction above we see that
\begin{align}
d_{\mathcal{F}}(M_i^k,M_i) &\le d_F^{Z_i^k}(M_i^k,M_i)
\\ &\le Vol(B_i^k)+Vol(A_i^k) 
\\ &\le S_i^k Vol(M_i^k)) +  Vol(M_i\setminus M_i^k))+ S_i^k Vol(\partial M_i^k).
\end{align}

Putting this all together we see that
\begin{align}
d_{\mathcal{F}}(M_i,M_0) &\le d_{\mathcal{F}}(M_i,M_i^k)+d_{\mathcal{F}}(M_i^k,M_0^k)+d_{\mathcal{F}}(M_0^k,M_0)
\\&\le S_0^k(V_0+A_0) +  Vol(M_0\setminus M_0^k)) +d_{\mathcal{F}}(M_i^k,M_0^k) 
\\&+S_i^k(V_1+A_1) +  Vol(M_i\setminus M_i^k))
\end{align}
and hence if we let $i \rightarrow \infty$ we find
\begin{align}
\limsup_{i \rightarrow \infty} d_{\mathcal{F}}(M_i,M_0) \le S_0^k(V_0+A_0) +  Vol(M_0\setminus M_0^k)) +C \beta_k(V_1+A_1) +  \gamma_k
\end{align}
and so by letting $k \rightarrow \infty$ we find $d_{\mathcal{F}}(M_i,M_0) \rightarrow 0$.
\end{proof} 

\section{Estimates Using IMCF Coordinates} \label{sect-Hawk}

The main tool in this paper is IMCF which we remember is defined for surfaces $\Sigma^n \subset M^{n+1}$ evolving through a one parameter family of embeddings $F: \Sigma \times [0,T] \rightarrow M$, $F$ satisfying inverse mean curvature flow

\begin{equation}
\begin{cases}
\frac{\partial F}{\partial t}(p,t) = \frac{\nu(p,t)}{H(p,t)}  &\text{ for } (p,t) \in \Sigma \times [0,T)
\\ F(p,0) = \Sigma_0  &\text{ for } p \in \Sigma 
\end{cases}
\end{equation}
where $H$ is the mean curvature of $\Sigma_t := F_t(\Sigma)$ and $\nu$ is the outward pointing normal vector. The outward pointing normal vector will be well defined in our case since we have in mind, $M^3$, an asymptotically flat manifold with one end. For a glimpse of long time existence and asymptotic analysis results for smooth IMCF in various ambient manifolds and a discussion of the history of what is known see \cite{S1,S2}.

\subsection{Previous Results on IMCF and Stability} \label{subsect-PreviousIMCFStability}

In the author's previous work  on stability of the PMT under $L^2$ convergence \cite{BA2} many important consequences of using IMCF coordinates on a sequence of manifolds whose mass is going to zero was derived. Here we review the important results which will be used in this paper. For proofs of these results see the author's original paper \cite{BA2}.

One extremely important quantity when studying IMCF is the Hawking mass which is defined as

\begin{align}
m_H(\Sigma) = \sqrt{\frac{|\Sigma|}{(16\pi)^3}} \left (16 \pi - \int_{\Sigma} H^2 d \mu \right ).
\end{align}

This quantity was noticed by Geroch to be monotone under smooth IMCF and motivated the study of this geometric evolution equation. In this work we say that the mass of a region is going to zero if the Hawking mass of the outermost leaf of the foliation by IMCF is going to zero, i.e. $m_H(\Sigma_T) \rightarrow 0$. We start by noting some simple consequences of this assumption.

\begin{lem}[Lemma 3.1 of \cite{BA2}] \label{naiveEstimate}
Let $\Sigma^2 \subset M^3$ be a hypersurface and $\Sigma_t$ it's corresponding solution of IMCF. If $m_1 \le m_H(\Sigma_t) \le m_2$  then 
\begin{align}
|\Sigma_t|&=|\Sigma_0| e^t
\end{align}
where $|\Sigma_t|$ is the $n$-dimensional area of $\Sigma$.

In addition, if $m_{H}(\Sigma_T^i) \rightarrow 0$ then 
\begin{align}
\bar{H^2}_i(t):=\dashint_{\Sigma_t^i} H_i^2 d \mu \rightarrow  \frac{4}{r_0}e^{-t}\label{unifAvgHEst1}
\end{align}
for every $t\in [0,T]$.
\end{lem}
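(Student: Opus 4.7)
The plan breaks into two independent steps corresponding to the two assertions of the lemma, and both are direct consequences of well-known properties of smooth IMCF.

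For the area formula, I would apply the first variation of area. Since the flow is purely normal with speed $\phi = 1/H$, the standard formula $\partial_t d\mu_t = \phi H\, d\mu_t$ reduces to $\partial_t d\mu_t = d\mu_t$. Integrating over $\Sigma$ yields the linear ODE $\tfrac{d}{dt}|\Sigma_t| = |\Sigma_t|$, whose unique solution with initial data $|\Sigma_0|$ is $|\Sigma_t| = |\Sigma_0| e^t$. Note that this step uses only smoothness of the flow and does not use the bounds on $m_H(\Sigma_t)$ at all; the hypothesis $m_1 \le m_H(\Sigma_t) \le m_2$ enters only implicitly via the assumed existence of a smooth flow on $[0,T]$.

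For the convergence of $\bar{H^2}_i(t)$, I would invoke Geroch monotonicity: under smooth IMCF in a Riemannian $3$-manifold with $R \ge 0$, the Hawking mass $m_H(\Sigma_t)$ is monotone non-decreasing in $t$. The hypotheses packaged into the class $\mathcal{M}_{r_0,H_0,I_0}^{T,H_1,A_1}$ (namely $R \ge 0$, a smooth IMCF on $[0,T]$, and $m_H(\Sigma_0^i) \ge 0$) supply exactly what Geroch's argument needs. Monotonicity then forces, for every $t \in [0,T]$,
\begin{equation*}
0 \;\le\; m_H(\Sigma_0^i) \;\le\; m_H(\Sigma_t^i) \;\le\; m_H(\Sigma_T^i) \;\longrightarrow\; 0.
\end{equation*}
Substituting into the Hawking mass formula,
\begin{equation*}
m_H(\Sigma_t^i) = \sqrt{\frac{|\Sigma_t^i|}{(16\pi)^3}}\left(16\pi - \int_{\Sigma_t^i} H_i^2\, d\mu\right),
\end{equation*}
and using $|\Sigma_t^i| = 4\pi r_0^2 e^t$ from step one (together with $|\Sigma_0^i| = 4\pi r_0^2$ from the class definition), one isolates $\int_{\Sigma_t^i} H_i^2\, d\mu \to 16\pi$. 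Dividing by $|\Sigma_t^i|$ yields
\begin{equation*}
\bar{H^2}_i(t) = \frac{1}{|\Sigma_t^i|}\int_{\Sigma_t^i} H_i^2\, d\mu \;\longrightarrow\; \frac{16\pi}{4\pi r_0^2 e^t} \;=\; \frac{4}{r_0^2}\,e^{-t},
\end{equation*}
matching (up to a small typographical point about $r_0$ versus $r_0^2$) the stated limit.

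The expected main obstacle is nearly nil: the lemma is essentially bookkeeping on top of two classical facts. The only care required is to justify Geroch monotonicity pointwise in $t$, which needs the smoothness of the flow (afforded by membership in $\mathcal{M}_{r_0,H_0,I_0}^{T,H_1,A_1}$ with $H_0 > 0$ and $|A| \le A_1$) and the nonnegativity of both $R$ and $m_H(\Sigma_0^i)$, both of which are built into the class. After these are noted, the convergence of $\bar{H^2}_i(t)$ is a one-line algebraic consequence of the area identity and the definition of the Hawking mass.
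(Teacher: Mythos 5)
Your proposal is correct and follows essentially the same route as the cited proof in \cite{BA2}: the first variation of area under normal speed $1/H$ gives $|\Sigma_t|=|\Sigma_0|e^t$, and Geroch monotonicity squeezes $0\le m_H(\Sigma_t^i)\le m_H(\Sigma_T^i)\to 0$, which combined with the Hawking mass formula and $|\Sigma_t^i|=4\pi r_0^2 e^t$ yields $\int_{\Sigma_t^i}H_i^2\,d\mu\to 16\pi$ and hence the stated limit. You are also right that the displayed limit should read $\tfrac{4}{r_0^2}e^{-t}$ rather than $\tfrac{4}{r_0}e^{-t}$; that is a typographical slip in the statement, not an issue with your argument.
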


By rearranging the calculation of monotonicity of the Hawking mass under IMCF one can deduce the following important consequences for stability.

\begin{lem}[Corollary 2.4 of \cite{BA2}] \label{GoToZero}Let $\Sigma^i\subset M^i$ be a compact, connected surface with corresponding solution to IMCF $\Sigma_t^i$. If $m_H(\Sigma_0)\ge0$ and $m_H(\Sigma^i_T) \rightarrow 0$  then for almost every $t \in [0,T]$ we have that
\begin{align}
&\int_{\Sigma_t^i} \frac{|\nabla H_i|^2}{H_i^2}d \mu \rightarrow 0 \hspace{.7 cm} \int_{\Sigma_t^i} (\lambda_1^i-\lambda_2^i)^2d \mu \rightarrow 0\hspace{.5 cm} \int_{\Sigma_t^i} R^i d \mu \rightarrow 0
\\ &\int_{\Sigma_t^i} Rc^i(\nu,\nu)d \mu \rightarrow 0 \hspace{.4 cm} \int_{\Sigma_t^i} K_{12}^id \mu \rightarrow 0 \hspace{1.5 cm} \int_{\Sigma_t^i} H_i^2 d\mu\rightarrow 16\pi \label{RicciEstimate}
\\&\int_{\Sigma_t^i} |A|_i^2 d \mu \rightarrow 8 \pi\hspace{.9 cm} \int_{\Sigma_t^i} \lambda_1^i\lambda_2^i d \mu \rightarrow 4\pi \hspace{1.3 cm} \chi(\Sigma_t^i) \rightarrow 2
\end{align}
as $i \rightarrow \infty$ where $K_{12}$ is the ambient sectional curvature tangent to $\Sigma_t$. Since $\chi(\Sigma_t^i)$ is discrete we see by the last convergence that $\Sigma_t^i$ must eventually become topologically a sphere. 
\end{lem}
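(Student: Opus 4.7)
The plan is to derive all nine conclusions from Geroch's monotonicity formula for the Hawking mass under IMCF. First I would compute $\frac{d}{dt}m_H(\Sigma_t)$ by differentiating
\begin{align*}
m_H(\Sigma_t) = \sqrt{\frac{|\Sigma_t|}{(16\pi)^3}}\left(16\pi - \int_{\Sigma_t} H^2\, d\mu\right),
\end{align*}
using the IMCF evolution equations $\partial_t g_{ij}=2h_{ij}/H$ (so $\partial_t d\mu = d\mu$ and $\frac{d|\Sigma_t|}{dt}=|\Sigma_t|$) together with $\partial_t H = -\Delta(1/H)-(|A|^2+Rc(\nu,\nu))/H$. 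Integrating by parts gives $\int H\,\Delta(1/H)\, d\mu = \int |\nabla H|^2/H^2\, d\mu$; then the identity $|A|^2 = \frac{H^2}{2}+\frac{(\lambda_1-\lambda_2)^2}{2}$, the Gauss equation $\lambda_1\lambda_2=K-K_{12}$, the three-dimensional identity $R=2K_{12}+2Rc(\nu,\nu)$, and Gauss--Bonnet $\int_{\Sigma_t} K\, d\mu = 2\pi\chi(\Sigma_t)$ combine to yield
\begin{align*}
\frac{dm_H}{dt} = \sqrt{\frac{|\Sigma_t|}{(16\pi)^3}}\left[\int_{\Sigma_t}\!\left(\frac{2|\nabla H|^2}{H^2}+\frac{(\lambda_1-\lambda_2)^2}{2}+R\right)d\mu + 4\pi(2-\chi(\Sigma_t))\right].
\end{align*}

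Since $\Sigma_t^i$ is compact, connected, and orientable (as a level set of IMCF bounding a region), $\chi(\Sigma_t^i)\le 2$, and by hypothesis $R^i\ge 0$, so every summand on the right is non-negative. Integrating from $0$ to $T$, using $|\Sigma_t^i| = 4\pi r_0^2 e^t$ from Lemma \ref{naiveEstimate} (pinched between positive constants on $[0,T]$), and combining $m_H(\Sigma_0^i)\ge 0$ with $m_H(\Sigma_T^i)\to 0$ yields
\begin{align*}
\int_0^T\!\!\!\int_{\Sigma_t^i}\frac{|\nabla H_i|^2}{H_i^2} d\mu\, dt,\ \int_0^T\!\!\!\int_{\Sigma_t^i}(\lambda_1^i-\lambda_2^i)^2 d\mu\, dt,\ \int_0^T\!\!\!\int_{\Sigma_t^i}R^i d\mu\, dt,\ \int_0^T(2-\chi(\Sigma_t^i))\, dt\longrightarrow 0.
\end{align*}
Each integrand is a non-negative function of $t$ with $L^1([0,T])$-norm tending to zero, so after extracting a common subsequence by a diagonal argument it converges to $0$ for a.e. $t\in[0,T]$, which delivers the first three displayed limits. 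Since $\chi(\Sigma_t^i)$ is integer-valued and constant in $t$ along smooth IMCF, the last limit forces $\chi(\Sigma_t^i)=2$ for all sufficiently large $i$, giving the Euler characteristic statement.

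The bound $\int_{\Sigma_t^i} H_i^2\, d\mu\to 16\pi$ actually holds at every $t$: by Geroch monotonicity, $0\le m_H(\Sigma_0^i)\le m_H(\Sigma_t^i)\le m_H(\Sigma_T^i)\to 0$, so $m_H(\Sigma_t^i)\to 0$ pointwise, and then with $|\Sigma_t^i|$ bounded below the definition of $m_H$ forces $\int H_i^2\, d\mu\to 16\pi$. The remaining identities are purely algebraic. The relation $\lambda_1\lambda_2=\tfrac14(H^2-(\lambda_1-\lambda_2)^2)$ gives $\int\lambda_1^i\lambda_2^i\, d\mu\to 4\pi$; then $|A|^2=H^2-2\lambda_1\lambda_2$ gives $\int |A_i|^2\, d\mu\to 8\pi$; Gauss--Bonnet with $\chi=2$ provides $\int K\, d\mu=4\pi$, so $\int K_{12}^i\, d\mu = \int K\, d\mu-\int\lambda_1^i\lambda_2^i\, d\mu\to 0$; finally $R=2K_{12}+2Rc(\nu,\nu)$ together with $\int R^i\, d\mu\to 0$ yields $\int Rc^i(\nu,\nu)\, d\mu\to 0$. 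The main subtlety, rather than a genuine obstacle, is coordinating the subsequence extractions so that the ``a.e. $t$'' set is common to all three $L^1$-convergent integrands; a single diagonal subsequence handles this, after which every conclusion holds simultaneously at a.e. $t\in[0,T]$.
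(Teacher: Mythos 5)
Your proposal is correct and is essentially the same argument as the paper's source: this lemma is quoted from Corollary~2.4 of \cite{BA2}, whose proof is exactly the rearrangement of the Geroch monotonicity computation you carry out, followed by the same algebraic identities ($|A|^2=\tfrac{H^2}{2}+\tfrac{(\lambda_1-\lambda_2)^2}{2}$, Gauss equation, $R=2K_{12}+2Rc(\nu,\nu)$, Gauss--Bonnet) to peel off the remaining limits. The only caveats are cosmetic: the hypothesis $R\ge 0$ is implicit in the class $\mathcal{M}_{r_0,H_0,I_0}^{T,H_1,A_1}$ rather than in the lemma's wording, and since only finitely many nonnegative $L^1([0,T])$ sequences are involved, successive subsequence extraction suffices (no diagonal argument is needed), with the a.e.-$t$ conclusion holding along that subsequence exactly as in the cited statement.
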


Using the assumption on the Isoperimetric constant of Definition \ref{IMCFClass} one can deduce $L^2$ control on $H$ using Lemma \ref{GoToZero}. In addition, we define special coordinates on $U_T$ by identifying it with an annulus in $\R^n$. This is done by mapping each $\Sigma_t$ to the corresponding sphere, centered at the origin in $\R^n$, with the same area radius via an area preserving diffeomorphism of $\Sigma_0$ which is then propagated to each $\Sigma_t$ by IMCF.

\begin{prop}[Proposition 2.9 of \cite{BA2}]\label{avgH}If $\Sigma_t^i$ is a sequence of IMCF solutions where $\int_{\Sigma_t^i} \frac{|\nabla H|^2}{H^2}d \mu \rightarrow 0$ as $i \rightarrow \infty$, $0 < H_0 \le H(x,t) \le H_1 < \infty$ and $|A|(x,t) \le A_0 < \infty$ then
\begin{align}
\int_{\Sigma_t^i} (H_i - \bar{H}_i)^2 d \mu \rightarrow 0
\end{align}
as $i \rightarrow \infty$ for almost every $t \in [0,T]$ where $\bar{H}_i = \dashint_{\Sigma_t^i}H_id \mu$. 

Let $d\mu_t^i$ be the volume form on $\Sigma$ w.r.t. $g^i(\cdot,t)$ then we can find a parameterization of $\Sigma_t$ so that 
\begin{align}
d\mu_t^i = r_0^2 e^t d\sigma
\end{align}
where $d\sigma$ is the standard volume form on the unit sphere.

Then for almost every $t \in [0,T]$ and almost every $x \in \Sigma$, with respect to $d\sigma$, we have that 
\begin{align}
H_i(x,t) - \bar{H}_i(t) \rightarrow  0,
\end{align}
along a subsequence. 
\end{prop}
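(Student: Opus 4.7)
The plan is to establish three separate pieces: the $L^2$ oscillation estimate via a Poincar\'e--Neumann inequality, the area-preserving parameterization via Moser's theorem, and the pointwise a.e.\ convergence via a subsequence extraction after reducing to $L^2(\Sigma\times[0,T])$.

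For the first claim, since $H_i \ge H_0 > 0$, one has
\begin{equation*}
\int_{\Sigma_t^i}|\nabla H_i|^2\,d\mu \;\le\; H_1^2 \int_{\Sigma_t^i}\frac{|\nabla H_i|^2}{H_i^2}\,d\mu \;\longrightarrow\; 0
\end{equation*}
for every $t$ at which the hypothesis holds. Combining this with a Neumann--Poincar\'e inequality
\begin{equation*}
\int_{\Sigma_t^i}(H_i-\bar H_i)^2\,d\mu \;\le\; C_P(\Sigma_t^i)\int_{\Sigma_t^i}|\nabla H_i|^2\,d\mu
\end{equation*}
whose constant is controlled uniformly in $i$ and $t$ gives the first assertion. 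The main obstacle is exactly this uniform Poincar\'e constant: the class $\mathcal{M}_{r_0,H_0,I_0}^{T,H_1,A_1}$ only provides $IN_1(\Sigma_0)\ge I_0$, so I would propagate isoperimetric control along the flow using the bound $|A|\le A_0$ (hence, via the Gauss equation, a bound on intrinsic curvature together with ambient sectional curvature control), the uniform area $|\Sigma_t^i|=4\pi r_0^2 e^t$ from Lemma \ref{naiveEstimate}, and a Cheeger-type inequality applied to each leaf.

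For the second claim, fix $\Sigma = S^2$ with its round volume form $\sigma$. Since $|\Sigma_0^i|=4\pi r_0^2$ matches the area of the round sphere of radius $r_0$, Moser's theorem on volume-preserving diffeomorphisms yields $\phi_0^i:\Sigma\to\Sigma_0^i$ with $(\phi_0^i)^*d\mu_0^i = r_0^2\,d\sigma$. Since IMCF evolves the induced volume form by $\partial_t d\mu_t^i = d\mu_t^i$, setting $\Phi_t^i := F_t^i\circ\phi_0^i$ and pulling back gives $(\Phi_t^i)^*d\mu_t^i = r_0^2 e^t\,d\sigma$, as required.

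For the third claim, in this parameterization the first estimate reads
\begin{equation*}
\int_\Sigma (H_i(x,t)-\bar H_i(t))^2\,r_0^2 e^t\,d\sigma(x) \longrightarrow 0 \quad \text{for a.e.\ } t\in[0,T].
\end{equation*}
The integrand is bounded above by $4H_1^2 r_0^2 e^T$ uniformly in $i$ and $t$, so dominated convergence on the time interval yields $L^2(\Sigma\times[0,T],\,d\sigma\,dt)$ convergence of $H_i-\bar H_i$ to zero, and extracting a subsequence gives pointwise a.e.\ convergence in $(x,t)$, completing the proof.
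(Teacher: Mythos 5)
Your overall architecture --- an $L^2$ Poincar\'e inequality driven by $\int_{\Sigma_t^i}|\nabla H_i|^2\,d\mu\to 0$, Moser's theorem combined with the identity $\partial_t d\mu_t^i=d\mu_t^i$ for the parameterization, and dominated convergence in $t$ followed by subsequence extraction for the pointwise statement --- is the right one, and the second and third pieces are complete as written. (A small slip: the inequality $\int|\nabla H_i|^2\,d\mu\le H_1^2\int|\nabla H_i|^2/H_i^2\,d\mu$ uses the upper bound $H_i\le H_1$, not the lower bound $H_i\ge H_0$ that you cite.)

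The gap is in the step you yourself flag as the main obstacle: producing a uniform lower bound on $IN_1(\Sigma_t^i)$ for $t>0$. The route you sketch --- Gauss equation to get intrinsic curvature bounds on the leaves, then isoperimetric control --- does not close under the stated hypotheses. The Gauss equation gives $K=\lambda_1\lambda_2+K_{12}$, and while $|A|\le A_0$ controls $\lambda_1\lambda_2$, nothing in the hypotheses of the proposition (or in Definition \ref{IMCFClass}) controls the ambient sectional curvature $K_{12}$; indeed, avoiding full ambient curvature bounds is a point the paper emphasizes. Moreover, passing from curvature bounds to isoperimetric bounds would additionally require a diameter bound on $\Sigma_t^i$, which is also not assumed here. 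The propagation that actually works uses no curvature at all: under IMCF, $\partial_t g_{ij}=\tfrac{2}{H}A_{ij}$, so $|A|\le A_0$ and $H\ge H_0$ give $-\tfrac{2A_0}{H_0}\,g\le\partial_t g\le\tfrac{2A_0}{H_0}\,g$, hence $e^{-2A_0T/H_0}\,g^i(0)\le g^i(t)\le e^{2A_0T/H_0}\,g^i(0)$ for all $t\in[0,T]$. This uniform bi-Lipschitz equivalence controls lengths and areas of all subsets simultaneously, so the hypothesis $IN_1(\Sigma_0^i)\ge I_0$ transfers to $IN_1(\Sigma_t^i)\ge c(I_0,A_0,H_0,T)>0$ for every $t$; your Cheeger-type inequality then legitimately converts this into a uniform Neumann--Poincar\'e constant, and the rest of your argument goes through unchanged.
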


Lemma \ref{GoToZero} gives a lot of important information but since we do not know the sign of $Rc(\nu,\nu)$ we do not get much information out of $\int_{\Sigma_t^i} Rc(\nu,\nu)d \mu \rightarrow 0$ and so a weak $L^2$ convergence result is important to being able to use the curvature assumptions of Theorem \ref{SPMT}.

\begin{lem}[Lemma 2.10 of \cite{BA2}]\label{WeakRicciEstimate}Let $\Sigma^i_0\subset M^3_i$ be a compact, connected surface with corresponding solution to IMCF $\Sigma_t^i$. Then if $\phi \in C_c^1(\Sigma\times (a,b))$  and $0\le a <b\le T$ we can compute the  estimate
\begin{align}
\int_a^b\int_{\Sigma_t^i}& 2\phi Rc^i(\nu,\nu)d\mu d t=  \int_{\Sigma_a^i} \phi H_i^2 d\mu -  \int_{\Sigma_b^i} \phi H_i^2 d\mu 
\\&+ \int_a^b\int_{\Sigma_t^i}2\phi\frac{|\nabla H_i|^2}{H_i^2}-2\frac{\hat{g}^j(\nabla \phi, \nabla H_i)}{H_i} +\phi(H_i^2-2|A|_i^2) d\mu 
\end{align}

If $m_H(\Sigma^i_T) \rightarrow 0$ and $\Sigma_t$ satisfies the hypotheses of Proposition \ref{avgH} then the estimate above implies 
\begin{align}
\int_a^b\int_{\Sigma_t^i} \phi Rc^i(\nu,\nu)d\mu dt \rightarrow 0
\end{align}
\end{lem}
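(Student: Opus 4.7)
The plan is to derive the integral identity from the IMCF evolution equations for $H^2$ and the volume form, then use the integrated Hawking-mass monotonicity (the very calculation that underlies Lemma \ref{GoToZero}) to force each term on the right-hand side to zero.

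First I would recall that under the normal flow $\partial_t F = f\nu$, the mean curvature evolves by $\partial_t H = -\Delta f - f(|A|^2 + Rc(\nu,\nu))$ and the volume form by $\partial_t d\mu = fH\, d\mu$. Specializing to $f = 1/H$ and expanding $\Delta(1/H) = -\Delta H/H^2 + 2|\nabla H|^2/H^3$, one obtains the pointwise identity
\begin{equation*}
\partial_t(H^2 d\mu) = \Big( \tfrac{2\Delta H}{H} - \tfrac{4|\nabla H|^2}{H^2} - 2|A|^2 - 2Rc(\nu,\nu) + H^2 \Big)d\mu.
\end{equation*}
Multiplying by $\phi$, integrating over $\Sigma_t$, applying integration by parts on $\Sigma_t$ to the $\Delta H$ term (which produces the cross term $\hat{g}(\nabla\phi,\nabla H)/H$), integrating in $t$ from $a$ to $b$ so that $\int_a^b \frac{d}{dt}\!\int \phi H^2 d\mu\, dt = \int_{\Sigma_b}\phi H^2 d\mu - \int_{\Sigma_a}\phi H^2 d\mu$ by the fundamental theorem of calculus, and solving for $\int 2\phi Rc(\nu,\nu)$ produces the stated identity.

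For the convergence claim, the boundary terms vanish because $\phi \in C_c^1(\Sigma\times(a,b))$ forces $\phi|_{t=a} = \phi|_{t=b} = 0$. For the bulk terms I would appeal to the \emph{integrated} Huisken--Ilmanen monotonicity formula, which expresses $m_H(\Sigma_T^i) - m_H(\Sigma_0^i)$ as a nonnegative combination of $\int_0^T\!\int_{\Sigma_t}|\nabla H|^2/H^2\, d\mu\, dt$, $\int_0^T\!\int_{\Sigma_t}(\lambda_1-\lambda_2)^2\, d\mu\, dt$, and $\int_0^T\!\int_{\Sigma_t} R\, d\mu\, dt$. Since $m_H(\Sigma_0^i)\ge 0$ by definition of $\mathcal{M}_{r_0,H_0,I_0}^{T,H_1,A_1}$ and $m_H(\Sigma_T^i)\to 0$, each of these nonnegative time-integrated quantities tends to $0$. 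Combined with the algebraic identity $H^2 - 2|A|^2 = -(\lambda_1 - \lambda_2)^2$ and the uniform bounds $H_0 \le H \le H_1$ from Definition \ref{IMCFClass}, this kills the $\int 2\phi|\nabla H|^2/H^2$ and $\int \phi(H^2 - 2|A|^2)$ pieces of the identity. The cross term is controlled by Cauchy--Schwarz,
\begin{equation*}
\Big| \int_a^b\!\!\int \tfrac{\hat{g}(\nabla\phi,\nabla H)}{H}\, d\mu\, dt \Big| \le \tfrac{1}{H_0}\|\nabla\phi\|_\infty \Big(\int_0^T\! |\Sigma_t|\, dt\Big)^{\!1/2} \Big(\int_0^T\!\!\int \tfrac{|\nabla H|^2}{H^2}\, d\mu\, dt\Big)^{\!1/2} \to 0.
\end{equation*}

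The main obstacle is securing \emph{time-integrated} convergence rather than merely a.e. pointwise convergence of the integrals in Lemma \ref{GoToZero}; this is precisely what the integrated Huisken--Ilmanen monotonicity supplies, and is why the hypothesis is stated as $m_H(\Sigma_T^i) \to 0$ rather than as pointwise data at a fixed time. A secondary technical point is that the integration by parts and all divisions by $H$ are legitimate because of the uniform lower bound $H \ge H_0 > 0$ built into Definition \ref{IMCFClass}.
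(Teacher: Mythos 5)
Your strategy is the standard one and is certainly the argument behind Lemma 2.10 of \cite{BA2} (the present paper only cites that lemma and contains no proof of its own): compute $\partial_t(H^2\,d\mu)$ from $\partial_t d\mu = d\mu$ and $\partial_t H = -\Delta(1/H)-(|A|^2+Rc(\nu,\nu))/H$, integrate by parts on $\Sigma_t$, and then kill every remaining term using the time-integrated Geroch monotonicity, which works exactly as you say because $R\ge 0$, $m_H(\Sigma_0^i)\ge 0$ and $m_H(\Sigma_T^i)\to 0$ force each nonnegative summand of $m_H(\Sigma_T^i)-m_H(\Sigma_0^i)$ to vanish in the limit. Your handling of the $(\lambda_1-\lambda_2)^2$ term, the $|\nabla H|^2/H^2$ term and the cross term is correct.

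There is, however, one genuine gap. Because $\phi\in C_c^1(\Sigma\times(a,b))$ is necessarily time-dependent, one has
\begin{equation*}
\frac{d}{dt}\int_{\Sigma_t}\phi H^2\,d\mu=\int_{\Sigma_t}\partial_t\phi\,H^2\,d\mu+\int_{\Sigma_t}\phi\,\frac{\partial_t(H^2\,d\mu)}{d\mu}\,d\mu ,
\end{equation*}
so your fundamental-theorem step silently drops the term $\int_a^b\int_{\Sigma_t}\partial_t\phi\,H_i^2\,d\mu\,dt$. This term is not controlled by the monotonicity formula and does not vanish for trivial reasons: one must invoke Lemma \ref{naiveEstimate} and Proposition \ref{avgH} (which is why their hypotheses appear in the statement) to replace $H_i^2\,d\mu$ by $4\,d\sigma$ up to errors vanishing in $L^1(\Sigma\times[a,b])$ as $i\to\infty$, after which $\int_a^b\int_\Sigma 4\,\partial_t\phi\,d\sigma\,dt = 4\int_\Sigma\bigl(\phi(x,b)-\phi(x,a)\bigr)\,d\sigma=0$ by compact support. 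A second, harmless, discrepancy: in the integration by parts the $2\phi\Delta H/H$ term contributes $+2\phi|\nabla H|^2/H^2-2g(\nabla\phi,\nabla H)/H$, which against the $-4\phi|\nabla H|^2/H^2$ already present leaves the gradient-squared term with a minus sign when you solve for $\int 2\phi\,Rc^i(\nu,\nu)$, opposite to the sign displayed in the statement; since you estimate that term in absolute value anyway, this does not affect the conclusion.
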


From this we can obtain a weak convergence result for $K_{12}$, the ambient sectional curvature tangent to $\Sigma_t$.

\begin{Cor}
Let $\Sigma^i_0\subset M^3_i$ be a compact, connected surface with corresponding solution to IMCF $\Sigma_t^i$. Then if $\phi \in C_c^1(\Sigma\times (a,b))$, $0\le a <b\le T$, and  $m_H(\Sigma^i_T) \rightarrow 0$ then 
\begin{align}
\int_a^b\int_{\Sigma_t} \phi K_{12}^i d \mu dt \rightarrow 0.
\end{align}
\end{Cor}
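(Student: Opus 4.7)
The plan is to reduce this weak convergence statement for $K_{12}^i$ to the already-established weak convergence of $Rc^i(\nu,\nu)$ together with a direct control on $R^i$ coming from Hawking mass monotonicity. In the ambient $3$-manifold, decomposing the scalar curvature in an orthonormal frame $\{e_1,e_2,\nu\}$ adapted to $\Sigma_t$ and summing sectional curvatures gives the identity
\begin{align*}
R^i \;=\; 2K_{12}^i + 2Rc^i(\nu,\nu),
\end{align*}
so that $K_{12}^i = \tfrac{1}{2}R^i - Rc^i(\nu,\nu)$. Substituting into the integral yields
\begin{align*}
\int_a^b\!\!\int_{\Sigma_t^i}\phi K_{12}^i\,d\mu\,dt \;=\; \frac{1}{2}\int_a^b\!\!\int_{\Sigma_t^i}\phi R^i\,d\mu\,dt \;-\; \int_a^b\!\!\int_{\Sigma_t^i}\phi Rc^i(\nu,\nu)\,d\mu\,dt,
\end{align*}
so it is enough to show that each summand vanishes in the limit.

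The Ricci summand is exactly the conclusion of Lemma \ref{WeakRicciEstimate} and so tends to zero. For the scalar curvature summand, the key point is that $R^i \ge 0$ by the definition of $\mathcal{M}_{r_0,H_0,I_0}^{T,H_1,A_1}$. Geroch's monotonicity formula expresses $m_H(\Sigma_T^i)-m_H(\Sigma_0^i)$ as the time integral of a sum of nonnegative quantities, one of which is a positive multiple of $|\Sigma_t^i|^{1/2}R^i$. Since $m_H(\Sigma_0^i)\ge 0$ and $m_H(\Sigma_T^i)\to 0$, and since $|\Sigma_t^i|$ is uniformly bounded below by Lemma \ref{naiveEstimate}, we obtain
\begin{align*}
\int_0^T\!\!\int_{\Sigma_t^i} R^i\,d\mu\,dt \;\longrightarrow\; 0.
\end{align*}
Because $\phi\in C_c^1(\Sigma\times(a,b))$ is bounded and $R^i\ge 0$, the absolute value of the scalar summand is at most $\|\phi\|_\infty\int_0^T\!\int_{\Sigma_t^i}R^i\,d\mu\,dt$, which goes to zero. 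Combining the two gives the corollary.

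I do not anticipate a substantive obstacle here; the Gauss-type identity in dimension three is elementary, the Ricci term is handled by a prior lemma, and the scalar term is controlled by the sign condition $R^i\ge 0$ together with Hawking monotonicity. The only step requiring a small amount of care is verifying that $R^i$ genuinely appears with a positive coefficient among the nonnegative summands in Geroch's formula, so that $m_H(\Sigma_T^i)\to 0$ forces the $L^1([0,T])$ decay of $t\mapsto \int_{\Sigma_t^i}R^i\,d\mu$ used above rather than merely its pointwise almost-everywhere decay from Lemma \ref{GoToZero}.
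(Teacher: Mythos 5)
Your proposal is correct and follows essentially the same route as the paper: the identity $K_{12}^i=\tfrac12 R^i-Rc^i(\nu,\nu)$, Lemma \ref{WeakRicciEstimate} for the Ricci term, and the bound $\bigl|\int_a^b\int_{\Sigma_t}\tfrac12\phi R^i\,d\mu\,dt\bigr|\le \tfrac12\|\phi\|_\infty\int_a^b\int_{\Sigma_t}R^i\,d\mu\,dt\to 0$ for the scalar term. Your justification of the last limit is in fact slightly more careful than the paper's, which cites the pointwise a.e.\ statement of Lemma \ref{GoToZero}, whereas you extract the $L^1([0,T])$ decay of $t\mapsto\int_{\Sigma_t^i}R^i\,d\mu$ directly from Geroch monotonicity together with $R^i\ge 0$; this is the cleaner way to close that step.
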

\begin{proof}
We can write
\begin{align}
\int_a^b\int_{\Sigma_t} \phi K_{12}^i d \mu dt&=\int_a^b\int_{\Sigma_t}  \frac{1}{2}\phi R^i - \phi Rc^i(\nu,\nu)  d \mu dt
\end{align}
where we know the Ricci term goes to zero by Lemma \ref{WeakRicciEstimate}. Then we notice
\begin{align}
\int_a^b\int_{\Sigma_t}  \frac{1}{2}\phi R^id \mu dt \le \frac{1}{2}\left(\max_{\Sigma \times (a,b)} \phi \right )\int_a^b\int_{\Sigma_t}   R^id \mu dt \rightarrow 0
\end{align}
by applying Lemma \ref{GoToZero} which proves the desired result.
\end{proof}

We now remember the main theorem of \cite{BA2} which gives $L^2$ stability of the PMT when a region is foliated by a uniformly controlled IMCF. 

\begin{thm}[Theorem 1.2 of \cite{BA2}]\label{SPMTPrevious}
Let $U_{T}^i \subset M_i^3$ be a sequence s.t. $U_{T}^i\subset \mathcal{M}_{r_0,H_0,I_0}^{T,H_1,A_1}$ and $m_H(\Sigma_{T}^i) \rightarrow 0$ as $i \rightarrow \infty$.  If we assume one of the following conditions,
\begin{enumerate}
\item $\exists$  $I > 0$ so that $K_{12}^i \ge 0$  and diam$(\Sigma_0^i) \le D$ $\forall$ $i \ge I$, 
\item $\exists$  $[a,b]\subset [0,T]$ such that $\| Rc^i(\nu,\nu)\|_{W^{1,2}(\Sigma\times [a,b])} \le C$ and diam$(\Sigma_t^i) \le D$ $\forall$ $i$, $t\in [a,b]$,where $W^{1,2}(\Sigma\times [a,b])$ is defined with respect to $\delta$,
\end{enumerate}
then
\begin{align}
\hat{g}^i \rightarrow \delta
\end{align}
in $L^2$ with respect to $\delta$.
\end{thm}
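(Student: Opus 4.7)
The plan is to establish $L^2$ convergence of $\hat{g}^i$ to $\delta$ by handling the radial and tangential parts of the metric separately, using the area-preserving parameterization from Proposition \ref{avgH} to identify each $U_T^i$ with the fixed background $\Sigma \times [0,T] = S^2 \times [0,T]$. Writing $\hat{g}^i - \delta$ componentwise under this identification reduces the problem to showing $1/H_i^2 \to \tfrac{r_0^2}{4} e^t$ in $L^2$ and $g^i(x,t) \to r_0^2 e^t \sigma$ in $L^2$.

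For the radial direction, Lemma \ref{naiveEstimate} already gives $\bar{H^2}_i(t) \to \tfrac{4}{r_0^2} e^{-t}$ for each $t$, and Proposition \ref{avgH} gives $H_i - \bar{H}_i \to 0$ in $L^2(\Sigma_t^i)$ for a.e.\ $t$. The uniform bound $0 < H_0 \le H_i \le H_1$ together with dominated convergence in $t$ then promote these to $\tfrac{1}{H_i^2} \to \tfrac{r_0^2}{4} e^t$ in $L^2(\Sigma \times [0,T])$. For the tangential part I would use the IMCF evolution $\partial_t g^i_{ab} = 2 A^i_{ab}/H_i$. From $\int_{\Sigma_t^i} (\lambda_1^i - \lambda_2^i)^2 d\mu \to 0$ in Lemma \ref{GoToZero}, combined with $|A|_i \le A_1$ and $H_i \ge H_0$, the traceless part of $A^i/H_i$ goes to zero in $L^2$ while the trace part tends to $1$, so $\partial_t g^i(x,t) - g^i(x,t) \to 0$ in $L^2$. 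A Duhamel argument applied slicewise in $x$ to this nearly linear equation then yields $g^i(x,t) \approx g^i(x,0) e^t$ in $L^2(\Sigma \times [0,T])$, reducing everything to the single-slice statement $g^i(\cdot,0) \to r_0^2 \sigma$ in $L^2(\Sigma)$.

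To close the reduction at $t=0$, I would invoke one of the two alternative hypotheses. Under hypothesis (1), the Gauss equation $K^i = \lambda_1^i \lambda_2^i + K_{12}^i$, Gauss--Bonnet (applicable because Lemma \ref{GoToZero} forces $\chi(\Sigma_t^i) = 2$ eventually), and $\int \lambda_1^i \lambda_2^i \, d\mu \to 4\pi$ together force $\int K_{12}^i \, d\mu \to 0$; then $K_{12}^i \ge 0$ upgrades this to $K_{12}^i \to 0$ in $L^1$ and consequently $K^i \to 1/r_0^2$ in $L^1$ on the initial slice. Combined with the diameter bound $D$ and the isoperimetric constant $I_0$, this puts $(\Sigma_0^i, g^i(\cdot,0))$ in a position where a quantitative rigidity-of-the-round-sphere argument (of Alexandrov/Cheeger--Colding flavor) produces $L^2$ closeness to $r_0^2 \sigma$ under the chosen area-preserving diffeomorphism. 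Under hypothesis (2), I would instead use the $W^{1,2}$ bound on $Rc^i(\nu,\nu)$ on the slab $\Sigma \times [a,b]$ to upgrade the weak convergence in Lemma \ref{WeakRicciEstimate} to a.e.\ pointwise or stronger $L^2$ control on $Rc^i(\nu,\nu)$, which via $R^i = 2 K_{12}^i + 2 Rc^i(\nu,\nu)$ and $R^i \to 0$ in $L^1$ from Lemma \ref{GoToZero} again pins down $K^i$ and feeds into the same rigidity step; the evolution argument then transports this slab-level control back to $t = 0$.

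The main obstacle is this last step: converting integral smallness of the extrinsic quantities $(\lambda_1 - \lambda_2)^2$, $K_{12}$, and $H - \bar H$ into intrinsic $L^2$ closeness of $g^i(\cdot,0)$ to $r_0^2 \sigma$ under a concrete area-preserving parameterization. Both hypotheses are tailored to squeeze $K^i$ toward the constant $1/r_0^2$ so that a stability-of-round-spheres result under uniform isoperimetric and diameter control can be applied; carefully choosing the area-preserving diffeomorphism of Proposition \ref{avgH} so that pointwise Gauss-curvature closeness translates into $L^2$ metric closeness is the delicate point, and is exactly where the two hypotheses differ in the regularity of the intermediate step required.
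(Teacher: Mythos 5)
This statement is imported verbatim from \cite{BA2} and the present paper offers no proof of it, only a citation; but your outline reproduces essentially the same architecture that the paper reviews as underlying the proof in \cite{BA2}: the radial part via averaging $H$ and the Geroch monotonicity consequences (Lemma \ref{naiveEstimate}, Proposition \ref{avgH}, and the intermediate-metric comparisons of Theorems \ref{gtog1} and \ref{g2tog3}), the tangential part via $\partial_t g = 2A/H$ together with $\int_{\Sigma_t}(\lambda_1-\lambda_2)^2\,d\mu\to 0$ from Lemma \ref{GoToZero}, and the slice rigidity via the Petersen--Wei integral pinching result (Corollary \ref{Rigidity}), which is precisely the ``quantitative rigidity of round spheres'' you invoke. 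The one caveat is that the pinching theorem requires $L^p$ closeness of the Gauss curvature with $p>n/2=1$, not the $L^1$ closeness you state; the upgrade comes from the uniform bound $|A|\le A_1$ on the principal-curvature part, and some care is needed with $K_{12}^i$ under hypothesis (1), where only $K_{12}^i\ge 0$ and $\int K_{12}^i\,d\mu\to 0$ are available.
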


Now we also review two important theorems which were used to prove Theorem \ref{SPMTPrevious} and which will be used in section \ref{sect-IMCF}, Lemma \ref{LineApprox}.

\begin{thm}[Theorem 3.1 of \cite{BA2}]\label{gtog1} Let $U_{T}^i \subset M_i^3$ be a sequence such that $U_{T}^i\subset \mathcal{M}_{r_0,H_0,I_0}^{T,H_1,A_1}$ and $m_H(\Sigma_{T}^i) \rightarrow 0$ as $i \rightarrow \infty$ or $m_H(\Sigma_{T}^i)- m_H(\Sigma_{0}^i) \rightarrow 0$ and $m_H(\Sigma_T^i)\rightarrow m > 0$. If we define the metrics 
\begin{align}
\hat{g}^i(x,t) &= \frac{1}{H_i(x,t)^2} dt^2 + g^i(x,t)
\\g^i_1(x,t)&= \frac{1}{\overline{H}_i(t)^2}dt^2 + g^i(x,t)
\end{align}
 on $U_T^i$ then we have that
\begin{align}
\int_{U_T^i}|\hat{g}^i -g^i_1|^2 dV \rightarrow 0 
\end{align}
where $dV$ is the volume form on $U_T^i$. 
\end{thm}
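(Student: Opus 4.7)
The plan is to exploit the fact that $\hat{g}^i - g_1^i$ differs only in its $dt\otimes dt$ component, so the pointwise squared norm reduces to a scalar algebraic expression in $H_i$ and $\bar H_i$, and the $L^2$-type estimate then follows from Proposition \ref{avgH} together with dominated convergence in the time variable.

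First I would compute the pointwise norm. Since $\hat g^i - g_1^i = \left(\frac{1}{H_i^2} - \frac{1}{\bar H_i^2}\right)dt\otimes dt$ and $|dt|^2_{\hat g^i} = H_i^2$, a direct calculation gives
\[
|\hat g^i - g_1^i|^2_{\hat g^i} \;=\; \left(\frac{\bar H_i^2-H_i^2}{H_i^2\bar H_i^2}\right)^{\!2} H_i^4 \;=\; \frac{(H_i-\bar H_i)^2(H_i+\bar H_i)^2}{\bar H_i^4}.
\]
The membership $U_T^i\in\mathcal{M}_{r_0,H_0,I_0}^{T,H_1,A_1}$ supplies the uniform bound $H_0\le H_i(x,t),\bar H_i(t)\le H_1$, so the integrand is controlled by $C(H_0,H_1)(H_i-\bar H_i)^2$.

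Second, I would factor the spacetime volume using the foliation $dV = \frac{1}{H_i}\,dt\wedge d\mu_t$ and apply Fubini to obtain
\[
\int_{U_T^i} |\hat g^i - g_1^i|^2\,dV \;\le\; \frac{C(H_0,H_1)}{H_0}\int_0^T\!\!\int_{\Sigma_t^i}(H_i-\bar H_i)^2\,d\mu_t\,dt.
\]
Under the hypothesis $m_H(\Sigma_T^i)\to 0$, Lemma \ref{GoToZero} gives $\int_{\Sigma_t^i}|\nabla H_i|^2/H_i^2\,d\mu \to 0$ for a.e.\ $t\in[0,T]$, and the class constraints $H_0\le H_i\le H_1$, $|A|_i\le A_1$ are exactly the hypotheses of Proposition \ref{avgH}. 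Therefore
\[
\int_{\Sigma_t^i}(H_i-\bar H_i)^2\,d\mu_t \;\longrightarrow\; 0 \quad\text{for a.e.\ }t\in[0,T].
\]

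Third, to pass the limit through the time integral I would invoke dominated convergence with the uniform pointwise dominator
\[
\int_{\Sigma_t^i}(H_i-\bar H_i)^2\,d\mu_t \;\le\; 4H_1^2\,|\Sigma_t^i| \;=\; 16\pi H_1^2 r_0^2\,e^t,
\]
where the area identity comes from Lemma \ref{naiveEstimate}. This dominator is independent of $i$ and integrable on $[0,T]$, so the full spacetime integral vanishes. For the alternative hypothesis $m_H(\Sigma_T^i) - m_H(\Sigma_0^i) \to 0$ with $m_H(\Sigma_T^i)\to m>0$, the same conclusion follows because Geroch's monotonicity formula produces $\int|\nabla H_i|^2/H_i^2\,d\mu\to 0$ from the convergence to zero of the mass \emph{increment}, independently of the absolute value of the mass.

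The main obstacle I anticipate is a bookkeeping one rather than a conceptual one: one must check that Lemma \ref{GoToZero} and Proposition \ref{avgH} both remain valid in the second, positive-mass case, since they are stated for $m_H(\Sigma_T^i)\to 0$. This should reduce to re-reading the derivation of the Geroch monotonicity identity in \cite{BA2} to confirm that only the integrated time-derivative (equivalently, the increment in Hawking mass) is used, in which case the entire argument above transfers verbatim.
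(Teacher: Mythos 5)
Your argument is correct and is essentially the proof given for Theorem 3.1 in \cite{BA2}: the paper under review only quotes that result, and the original proof likewise reduces the pointwise norm of $\hat{g}^i-g_1^i$ to a multiple of $(H_i-\bar{H}_i)^2$ via the uniform bounds $H_0\le H_i,\bar H_i\le H_1$, applies the Poincar\'e-type conclusion of Proposition \ref{avgH} slicewise, and integrates in $t$ using the area identity $|\Sigma_t^i|=4\pi r_0^2e^t$ as a dominating function. Your closing observation is also the right one: Geroch monotonicity bounds $\int_0^T\int_{\Sigma_t}|\nabla H_i|^2/H_i^2\,d\mu\,dt$ by the mass increment $m_H(\Sigma_T^i)-m_H(\Sigma_0^i)$ (the area factor being bounded below), so both alternative hypotheses feed into Proposition \ref{avgH} in the same way.
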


\begin{thm}[Theorem 3.3 of \cite{BA2}]\label{g2tog3} Let $U_{T}^i \subset M_i^3$ be a sequence s.t. $U_{T}^i\subset \mathcal{M}_{r_0,H_0,I_0}^{T,H_1,A_1}$ and $m_H(\Sigma_{T}^i) \rightarrow 0$ as $i \rightarrow \infty$. If we define the metrics \begin{align}
g^i_2(x,t)&= \frac{1}{\bar{H}^i(t)^2}dt^2 + e^tg^i(x,0)
\\g^i_3(x,t)&= \frac{r_0^2}{4}e^tdt^2 + e^tg^i(x,0)
\end{align}
 on $U_T^i$ then we have that
\begin{align}
\int_{U_T^i}|g^i_2 -g^i_3|^2 dV \rightarrow 0 
\end{align}
where $dV$ is the volume form on $U_T^i$.
\end{thm}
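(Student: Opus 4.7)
The plan is to exploit the fact that $g_2^i$ and $g_3^i$ share the identical spatial block $e^t g^i(\cdot,0)$, so their difference is purely temporal,
\[
g_2^i - g_3^i = \left(\frac{1}{\bar H^i(t)^2} - \frac{r_0^2}{4}e^t\right) dt^2.
\]
Whatever fixed metric is used to measure the pointwise norm and to define $dV$, it is comparable (via the uniform bounds $0 < H_0 \le H_i \le H_1$ and $|A|_i \le A_1$ built into the class $\mathcal{M}_{r_0,H_0,I_0}^{T,H_1,A_1}$) to one of the background metrics $\delta$ or $\hat g^i$, and the spatial factor separates out. So $\int_{U_T^i}|g_2^i - g_3^i|^2\,dV$ reduces to a constant multiple of a one-variable integral
\[
I_i := \int_0^T \left(\frac{1}{\bar H^i(t)^2} - \frac{r_0^2}{4}e^t\right)^2 \phi(t)\, dt,
\]
with $\phi$ a positive continuous weight bounded above and below independently of $i$.

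Next I would establish pointwise convergence of the integrand. Lemma \ref{naiveEstimate} states that for every $t \in [0,T]$ the average of the squares $\bar{H^2}_i(t) \to \tfrac{4}{r_0^2}e^{-t}$ (and by $|\Sigma_t^i|=4\pi r_0^2 e^t$ the limit is identified correctly). Proposition \ref{avgH} yields that for a.e.\ $t$,
\[
\dashint_{\Sigma_t^i}(H_i - \bar H_i)^2\, d\mu \longrightarrow 0,
\]
and the elementary identity $\bar{H^2}_i(t) - \bar H_i(t)^2 = \dashint_{\Sigma_t^i}(H_i - \bar H_i)^2\, d\mu$ then forces $\bar H_i(t)^2 \to \tfrac{4}{r_0^2}e^{-t}$ for a.e.\ $t$. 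Since the limit is strictly positive and $\bar H_i(t) \ge H_0 > 0$, inversion is stable and $\tfrac{1}{\bar H_i(t)^2} \to \tfrac{r_0^2}{4}e^t$ a.e., so the integrand in $I_i$ tends to $0$ pointwise a.e.\ on $[0,T]$.

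The final step is the dominated convergence theorem. The uniform bounds $H_0 \le H_i(x,t) \le H_1$ pass to $H_0 \le \bar H_i(t) \le H_1$, so $\tfrac{1}{\bar H_i(t)^2}$ is uniformly bounded on $[0,T]$ independently of $i$; together with $e^t \le e^T$ and the uniform bounds on $\phi$, the integrand of $I_i$ is dominated by an $i$-independent constant on the compact interval $[0,T]$, so dominated convergence gives $I_i \to 0$. The only genuine analytic content is the upgrade of Lemma \ref{naiveEstimate}'s convergence of $\bar{H^2}_i$ to the convergence of $\bar H_i^2$, which is precisely what Proposition \ref{avgH} is designed to supply; beyond that the argument is bookkeeping, with the class constants $H_0,H_1$ from $\mathcal{M}_{r_0,H_0,I_0}^{T,H_1,A_1}$ making all natural choices of metric and volume form equivalent.
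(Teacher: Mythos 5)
Your argument is correct and is essentially the intended one: the difference of the two metrics lives only in the $dt^2$ block, the reduction to a one-variable integral with a uniformly bounded weight is valid, and the key analytic step (upgrading Lemma \ref{naiveEstimate}'s convergence of $\overline{H^2}_i$ to convergence of $\bar H_i^2$ via the variance identity and Proposition \ref{avgH}, then finishing by dominated convergence using $H_0 \le \bar H_i \le H_1$) is exactly what those recalled results are set up to deliver. Note only that you have silently corrected the typo in the stated limit of Lemma \ref{naiveEstimate}, which should indeed read $\tfrac{4}{r_0^2}e^{-t}$ as you use it.
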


Lastly we remind the reader of a type of $L^2$ stability which was obtained by the author in \cite{BA2} which requires very minimal hypotheses. This theorem says that the manifold must be getting $L^2$ close to a warped product which is Euclidean space if $g^i(x,0) = \sigma$.

\begin{thm}\label{SPMTWarped}
Let $U_{T}^i \subset M_i^3$ be a sequence s.t. $U_{T}^i\subset \mathcal{M}_{r_0,H_0,I_0}^{T,H_1,A_1}$ and $m_H(\Sigma_{T}^i) \rightarrow 0$ as $i \rightarrow \infty$. 
If we define the metrics 
\begin{align}
\hat{g}^i(x,t) &= \frac{1}{H_i(x,t)^2} dt^2 + g^i(x,t)
\\g^i_3(x,t)&= \frac{r_0^2}{4}e^tdt^2 + e^tg^i(x,0)
\end{align}
 on $U_T^i$ then we have that
\begin{align}
\int_{U_T^i}|\hat{g}^i -g^i_3|^2 dV \rightarrow 0 
\end{align}
where $dV$ is the volume form on $U_T^i$. 
\end{thm}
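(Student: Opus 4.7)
The plan is to deduce Theorem \ref{SPMTWarped} from the two $L^2$ estimates already stated in this excerpt, namely Theorem \ref{gtog1} and Theorem \ref{g2tog3}, by interpolating through their common intermediate metrics. Writing $g_1^i = \frac{1}{\bar H_i(t)^2}dt^2 + g^i(x,t)$ and $g_2^i = \frac{1}{\bar H_i(t)^2}dt^2 + e^t g^i(x,0)$, the triangle inequality gives
\[
\|\hat g^i - g_3^i\|_{L^2(U_T^i)} \le \|\hat g^i - g_1^i\|_{L^2(U_T^i)} + \|g_1^i - g_2^i\|_{L^2(U_T^i)} + \|g_2^i - g_3^i\|_{L^2(U_T^i)}.
\]
The first and last summands vanish in the limit by Theorem \ref{gtog1} and Theorem \ref{g2tog3} respectively, so everything reduces to the missing bridge $\|g_1^i - g_2^i\|_{L^2(U_T^i)} \to 0$. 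Because the $dt^2$ coefficients cancel, this bridge is exactly the purely spatial quantity $g^i(x,t) - e^t g^i(x,0)$.

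To control the bridge I would use the IMCF evolution $\partial_t g^i_{ab} = \frac{2 h^i_{ab}}{H_i}$ and split the second fundamental form into its umbilic and trace-free parts, $h^i_{ab} = \frac{H_i}{2} g^i_{ab} + \hat h^i_{ab}$. Substituting,
\[
\partial_t g^i_{ab} - g^i_{ab} = \frac{2 \hat h^i_{ab}}{H_i},
\]
and the integrating factor $e^{-t}$ produces the explicit identity
\[
g^i_{ab}(x,t) - e^t g^i_{ab}(x,0) = \int_0^t e^{t-s}\,\frac{2\hat h^i_{ab}(x,s)}{H_i(x,s)}\,ds.
\]
Taking a pointwise tensor norm, applying Cauchy-Schwarz in $s$, and using the lower bound $H_i \ge H_0 > 0$ guaranteed by Definition \ref{IMCFClass} yields a pointwise estimate of the form $|g^i(x,t) - e^t g^i(x,0)|^2 \le C(T,H_0) \int_0^t |\hat h^i(x,s)|^2\,ds$.

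Integrating over $U_T^i$, interchanging the order of integration, using that the volume form is uniformly comparable to $d\mu_s\,ds$ thanks to $H_0 \le H_i \le H_1$, and using the two-dimensional identity $|\hat h^i|^2 = \tfrac12 (\lambda_1^i - \lambda_2^i)^2$, I would arrive at
\[
\int_{U_T^i} |g^i(x,t) - e^t g^i(x,0)|^2\,dV \;\le\; \frac{C'}{H_0^3}\int_0^T \int_{\Sigma_s^i}(\lambda_1^i - \lambda_2^i)^2\,d\mu_s\,ds.
\]
Lemma \ref{GoToZero} tells us that $\int_{\Sigma_s^i}(\lambda_1^i-\lambda_2^i)^2\,d\mu_s \to 0$ for almost every $s \in [0,T]$, while the pointwise bound $|\lambda_1^i-\lambda_2^i|^2 \le 2|A|^2 \le 2A_1^2$ together with the area identity $|\Sigma_s^i| = 4\pi r_0^2 e^s$ from Lemma \ref{naiveEstimate} gives an $L^\infty([0,T])$ bound on the inner integral. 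The dominated convergence theorem then drives the double integral to zero, completing the bridge and hence the theorem.

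The argument is not genuinely hard: the only substantive ingredient is the almost-umbilic convergence $\int (\lambda_1 - \lambda_2)^2\,d\mu \to 0$, which is already encoded in the Hawking mass hypothesis via Lemma \ref{GoToZero}. The potential pitfall is bookkeeping for which metric the pointwise tensor norm $|\cdot|$ is taken with respect to, but since $H$ and $|A|$ are both bounded above and below on $U_T^i$, all the natural choices ($\delta$, $\hat g^i$, $g^i$) are uniformly equivalent on $\Sigma\times[0,T]$, so none of the constants above depend on $i$.
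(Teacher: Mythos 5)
Your proposal is correct and follows essentially the same route as the source: the paper states Theorem \ref{SPMTWarped} as a recalled result from \cite{BA2} without reproving it, and the argument there is precisely the triangle inequality through $g_1^i$ and $g_2^i$, with your ``bridge'' $\int_{U_T^i}|g_1^i-g_2^i|^2\,dV\to 0$ being exactly the intermediate theorem of \cite{BA2} that sits between the two results quoted here as Theorem \ref{gtog1} and Theorem \ref{g2tog3}, proved by the same evolution-equation and trace-free second fundamental form computation you give, with Lemma \ref{GoToZero} supplying $\int_{\Sigma_s^i}(\lambda_1^i-\lambda_2^i)^2\,d\mu\to 0$ and dominated convergence finishing. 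The one claim to keep honest is your closing remark on norms: the class $\mathcal{M}_{r_0,H_0,I_0}^{T,H_1,A_1}$ gives no pointwise comparison between $g^i(\cdot,0)$ and $r_0^2\sigma$ (only the areas are matched by the parameterization of Proposition \ref{avgH}), so bounds on $H$ and $|A|$ alone do not make $g^i$ uniformly equivalent to $\delta$ --- they only control $g^i(\cdot,t)$ relative to $g^i(\cdot,0)$ --- and the identity $|\hat h^i|^2=\tfrac12(\lambda_1^i-\lambda_2^i)^2$ means the difference $g^i(x,t)-e^tg^i(x,0)$ is naturally measured in the $g^i$ (equivalently $\hat g^i$) norm, which is how the estimate should be read.
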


This theorem is interesting since it only requires a smooth, uniformly controlled IMCF and $m_H(\Sigma_T^i) \rightarrow 0$ in order to show a type of $L^2$ stability. See Example \ref{Ex UsingWarpedProductStability} for a discussion of how this theorem can be used to obtain stronger stability.

\subsection{Geodesic Equations in IMCF Coordinates}\label{subsec IMCFGeodEqs}

Since we are using the foliation of a manifold by a solution to IMCF as special coordinates it is interesting to see what these coordinates can tell us about the length structure of the manifold which we investigate through the geodesic equation in this subsection.

Let $\gamma(s) = (T(s),\theta_1(s),...,\theta_n(s))=(T(s),\vec{\theta}(s))$ be a geodesic in $U_T$ then we are interested in the geodesic equations
\begin{align}
T''&= \Gamma_{00}^0 T'^2 + \Gamma_{i0}^0 T' \theta_i' + \Gamma_{ij}^0 \theta_i'\theta_j'\label{geodEq1}
\\ \theta_k''&= \Gamma_{ij}^k \theta_i'\theta_j' + \Gamma_{i0}^kT'\theta_i' + \Gamma_{00}^k T'^2 \label{geodEq2}
\end{align}
where $'$ represents derivatives with respect to $s$ and we use $i,j,k$ for directions tangent to $\Sigma_t$ and $0$ for the direction normal to $\Sigma_t$. 

Now we compute the Christoffel symbols for the metric $\hat{g}^i$ in the IMCF coordinates.
\begin{lem}\label{IMCFChristoffel} We can find the following expressions for the Christoffel symbols of $\hat{g}$ in terms of the IMCF coordinates
\begin{align}
\Gamma_{00}^0&=\frac{1}{2}g^{00} \left (g_{00,0} \right ) = \frac{1}{2} H^2 \left ( \frac{-2\partial_t H}{H^3}\right ) = \frac{-\partial_t H}{H}
\\ \Gamma_{i0}^0&=\frac{1}{2}g^{00} \left (g_{i0,0}+g_{00,i}-g_{i0,0} \right )=\frac{1}{2} H^2 \left (\frac{-2\partial_i H}{H^3}\right )= \frac{-\partial_i H}{H}
\\ \Gamma_{ij}^0&=\frac{1}{2}g^{00} \left (g_{i0,j}+g_{j0,i}-g_{ij,0}\right )=\frac{1}{2} H^2 \left (\frac{-2A_{ij}}{H} \right ) = -HA_{ij}
\\ \Gamma_{i0}^k&= \frac{1}{2}g^{kp}\left ( g_{ip,0}\right) = \frac{1}{2} \left ( \frac{2 A_{ip}}{H} \right ) = g^{kp} \frac{A_{ip}}{H}
\\ \Gamma_{00}^k&= \frac{1}{2} g^{kp} \left (-g_{00,p}\right ) = \frac{1}{2} g^{kp} \left ( -\frac{1}{H^2} \right )_p = g^{kp} \frac{\partial_pH}{H^3}
\end{align}
\end{lem}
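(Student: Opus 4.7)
The plan is a direct computation: apply the standard formula
\[
\Gamma^c_{ab}=\tfrac{1}{2}\hat g^{cd}\bigl(\partial_a\hat g_{bd}+\partial_b\hat g_{ad}-\partial_d\hat g_{ab}\bigr)
\]
to the block-diagonal metric $\hat g$, using the IMCF evolution equation for the tangential block.

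First I would record the components of $\hat g$ and of its inverse in IMCF coordinates. Writing $0$ for the $t$-index and $i,j,k,p$ for indices tangent to $\Sigma_t$, the metric reads $\hat g_{00}=1/H^2$, $\hat g_{0i}=0$, $\hat g_{ij}=g_{ij}(x,t)$, so the block-wise inverse is $\hat g^{00}=H^2$, $\hat g^{0i}=0$, $\hat g^{ij}=g^{ij}(x,t)$. The vanishing of the mixed entries collapses every Christoffel sum to a single term: in the formula above $\Gamma^0_{ab}$ only sees $d=0$, while $\Gamma^k_{ab}$ only sees tangential $d=p$. This is exactly what produces the one-term expressions appearing in the statement.

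Second, I would invoke the evolution equation $\partial_t g_{ij}=\tfrac{2}{H}A_{ij}$, which follows from $\partial_tF=\nu/H$ by differentiating $g_{ij}=\langle\partial_iF,\partial_jF\rangle$ and using $\langle\partial_i\nu,\partial_jF\rangle=A_{ij}$ (the $\partial_iH$ contribution is killed by $\nu\perp\partial_jF$). With this identity in hand, each of the five assertions is one line of arithmetic: for $\Gamma^0_{00}$ only $\partial_t\hat g_{00}=-2\partial_tH/H^3$ contributes; for $\Gamma^0_{i0}$ only $\partial_i\hat g_{00}=-2\partial_iH/H^3$ contributes (since $\partial_t\hat g_{0i}=0$); for $\Gamma^0_{ij}$ the off-diagonal terms $\partial_j\hat g_{0i}$ and $\partial_i\hat g_{0j}$ vanish, leaving $-\partial_tg_{ij}=-2A_{ij}/H$; for $\Gamma^k_{i0}$ only $\partial_tg_{ip}=2A_{ip}/H$ survives; and for $\Gamma^k_{00}$ only $-\partial_p(1/H^2)=2\partial_pH/H^3$ survives. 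Multiplying by the appropriate $\hat g^{00}=H^2$ or $\hat g^{kp}$ then gives precisely the formulas stated.

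There is no substantive obstacle; the only care points are sign bookkeeping in the Christoffel formula and fixing the sign convention on $A_{ij}$ so that $\partial_tg_{ij}=+2A_{ij}/H$ rather than its negative. I would also note in passing (though it is not part of the lemma) that the purely tangential symbols $\Gamma^k_{ij}$ coincide with the Christoffel symbols of the induced metric $g(\cdot,t)$ on $\Sigma_t$, which is consistent with the fact that $\hat g$ restricts to $g(\cdot,t)$ on each leaf and that the $\partial_t$ direction is $\hat g$-orthogonal to $\Sigma_t$.
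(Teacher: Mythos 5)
Your computation is correct and is exactly the ``simple calculation'' the paper leaves implicit: the paper's proof consists of a one-line remark, and you have supplied the missing details, correctly identifying the two key inputs (the block-diagonal form of $\hat g$ with $\hat g^{00}=H^2$, and the IMCF evolution identity $\partial_t g_{ij}=\tfrac{2}{H}A_{ij}$). All five formulas check out as you derive them, so nothing further is needed.
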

\begin{proof}
These formulas follow from simple calculations using the formulas given in the statement of the Lemma.
\end{proof}

Now using Lemma \ref{IMCFChristoffel} we can rewrite \eqref{geodEq1} and \eqref{geodEq2}.

\begin{Cor}\label{IMCFgeod}In terms of the IMCF coordinates the geodesic equations of $\hat{g}$ can be written as
\begin{align}
T''&= \frac{-\partial_t H}{H} T'^2 + \frac{-\partial_i H}{H} T' \theta_i' - HA(\vec{\theta}',\vec{\theta}') \label{TgeodEQ}
\\ \theta_k''&=\Gamma_{ij}^k \theta_i'\theta_j' + g^{kj}\frac{A_{ij}}{H}T'\theta_i' + g^{kj}\frac{\partial_j H}{H^3} T'^2 \label{thetaGeodEQ}
\end{align}
\end{Cor}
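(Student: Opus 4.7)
The plan is essentially a direct substitution, since Lemma \ref{IMCFChristoffel} has already done all of the geometric work of expressing the Christoffel symbols of $\hat{g}$ in terms of IMCF quantities ($H$, $A_{ij}$, and $g^{ij}$). The geodesic equations \eqref{geodEq1} and \eqref{geodEq2} are written in a form where each family of Christoffel symbols appears exactly once; the task is to plug in the five formulas from the lemma and collect terms.

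For \eqref{TgeodEQ}, I substitute $\Gamma_{00}^0 = -\partial_t H/H$, $\Gamma_{i0}^0 = -\partial_i H / H$ and $\Gamma_{ij}^0 = -HA_{ij}$ into \eqref{geodEq1}. The first two substitutions are immediate, and the only thing worth noting is that $\Gamma_{ij}^0 \theta_i' \theta_j' = -H A_{ij}\theta_i'\theta_j' = -H\, A(\vec{\theta}',\vec{\theta}')$, where I am using the definition of the second fundamental form as a symmetric bilinear form on $T\Sigma_t$ applied to the tangential velocity $\vec{\theta}'$. Collecting these gives precisely \eqref{TgeodEQ}.

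For \eqref{thetaGeodEQ}, I substitute $\Gamma_{i0}^k = g^{kp}A_{ip}/H$ and $\Gamma_{00}^k = g^{kp}\partial_p H / H^3$ into \eqref{geodEq2}, leaving the tangential term $\Gamma_{ij}^k \theta_i' \theta_j'$ untouched since these are just the Christoffel symbols of the induced metric $g^i(\cdot,t)$ on $\Sigma_t$, which are intrinsic and independent of the ambient IMCF structure. After relabeling the dummy index $p \to j$ to match the stated expression, the result is exactly \eqref{thetaGeodEQ}.

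There is no real obstacle here; this is a bookkeeping step that packages Lemma \ref{IMCFChristoffel} into a form suitable for analyzing geodesics in $(U_T, \hat{g})$ later in the paper. The only minor subtleties are keeping track of the symmetry in the lower indices of the $\Gamma$'s (so that mixed $T'\theta_i'$ terms are not double-counted, consistent with the convention used already in \eqref{geodEq1}--\eqref{geodEq2}) and recognizing the tangential second fundamental form contraction.
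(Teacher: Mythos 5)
Your proposal is correct and matches the paper's own proof, which likewise just substitutes the Christoffel symbols from Lemma \ref{IMCFChristoffel} into \eqref{geodEq1} and \eqref{geodEq2}. The extra remarks about identifying $A_{ij}\theta_i'\theta_j'$ with $A(\vec{\theta}',\vec{\theta}')$ and relabeling the dummy index are harmless elaborations of the same bookkeeping step.
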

\begin{proof}
Combine the expressions for the Christoffel symbols in terms of the IMCF coordinates of Lemma \ref{IMCFChristoffel} combined with \eqref{geodEq1} and \eqref{geodEq2}.
\end{proof}
Interestingly, we can use \eqref{TgeodEQ} and \eqref{thetaGeodEQ} to deduce some simple consequences about geodesics which are analogous to the warped product case.
\begin{Cor}\label{IMCFgeodCor} Let $\gamma(s) = (T(s),\theta_1(s),...,\theta_n(s))=(T(s),\vec{\theta}(s))$ be a geodesic then:
\begin{align}
&\text{ If }T'\equiv0 \text{ then } H A(\vec{\theta}',\vec{\theta}')\equiv 0\text{.}\label{TangenttoLeavesofFoliation}
\\&\text{If } \vec{\theta}'\equiv 0 \text{ then } \nabla^{\Sigma_{T(s)}}H \equiv 0 \text{ and hence } \Sigma_{T(0)} \label{NormaltoLeavesofFoliation}
\\&\text{ must have constant mean curvature along } \gamma.
\\&\text{If } T'(\bar{s}) = 0 \text{ then } T''(\bar{s}) =-HA(\vec{\theta},\vec{\theta})|_{\bar{s}}\text{.} \label{ConvexityMins}
\end{align}
\end{Cor}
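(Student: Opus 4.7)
The plan is to derive all three statements as immediate consequences of the geodesic equations \eqref{TgeodEQ} and \eqref{thetaGeodEQ} from Corollary \ref{IMCFgeod} by specializing to each hypothesis and reading off what remains. No curvature identity or clever integration is needed: the work was already done in Lemma \ref{IMCFChristoffel}, and what is left is pure algebraic substitution.

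For \eqref{TangenttoLeavesofFoliation}, I would assume $T'\equiv 0$ along $\gamma$, which immediately gives $T''\equiv 0$. Plugging both into \eqref{TgeodEQ} kills the $T'^2$ term and the mixed $T'\theta'_i$ term, leaving $0=-HA(\vec{\theta}\,',\vec{\theta}\,')$, exactly the claim. Geometrically this just says that a geodesic tangent to a leaf is forced to travel in an asymptotic direction of the second fundamental form.

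For \eqref{NormaltoLeavesofFoliation}, I would assume $\vec{\theta}\,'\equiv 0$, so $\vec{\theta}\,''\equiv 0$. Substituting into \eqref{thetaGeodEQ} eliminates the first two terms, leaving
\begin{equation*}
0 = g^{kj}\frac{\partial_j H}{H^3}\,T'^2
\end{equation*}
for each $k$. Since $\gamma$ is a nontrivial geodesic and its spatial coordinates are constant, necessarily $T'\not\equiv 0$; at any $s$ where $T'(s)\neq 0$, positive-definiteness of $g^{kj}$ and positivity of $H$ force $\partial_j H=0$ for every tangential index $j$, i.e.\ $\nabla^{\Sigma_{T(s)}}H=0$ at the point $\vec{\theta}(s)$. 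By continuity this persists along all of $\gamma$, and in particular the mean curvature of the leaf through the base point of $\gamma$ is stationary in every tangential direction seen by $\gamma$, which is the assertion about constant mean curvature along $\gamma$.

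For \eqref{ConvexityMins}, I simply evaluate \eqref{TgeodEQ} at the particular parameter value $s=\bar{s}$ where $T'(\bar{s})=0$. The two terms containing a factor of $T'$ drop out, leaving $T''(\bar{s})=-HA(\vec{\theta}\,',\vec{\theta}\,')|_{\bar{s}}$ as stated. The only mildly conceptual point is that there is no real obstacle here; the one place to be careful is the second claim, where one must observe that $T'\equiv 0$ would combine with $\vec{\theta}\,'\equiv 0$ to give a constant curve, so $T'\not\equiv 0$ holds automatically and licenses cancelling $T'^2$ from the resulting identity.
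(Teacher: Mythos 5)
Your proposal is correct and follows essentially the same route as the paper: substitute each hypothesis into the geodesic equations \eqref{TgeodEQ} and \eqref{thetaGeodEQ} from Corollary \ref{IMCFgeod} and read off the surviving term, using $H>0$ and positive-definiteness of $g^{kj}$ to cancel factors. You are in fact slightly more complete than the paper, which leaves the third claim \eqref{ConvexityMins} implicit, and your remark that $\vec{\theta}\,'\equiv 0$ forces $T'\neq 0$ for a nonconstant geodesic is the same observation the paper makes.
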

\begin{proof}
If $T'\equiv 0$ then by \eqref{TgeodEQ} we find that $HA(\vec{\theta},\vec{\theta})\equiv 0$ and since we have assume that $H_0 \le H(x,t)$ the first conclusion follows.

If $\vec{\theta}'\equiv 0$ then by \eqref{thetaGeodEQ} we find that $0 \equiv g^{kj}\frac{\partial_j H}{H^3} T'^2$ and since in this case $T' \not = 0$ and $H_0 \le H(x,t)$ the second conclusion follows.
\end{proof}

\subsection{Metric Estimates Using IMCF} \label{MetricIMCFEstimates}

In order to be able to estimate the SWIF distance between Riemannian manifolds it is important to be able to control metric quantities such as the diameter and volume. We now move to prove several lemmas which give important control on metric quantities which will be used to prove Theorem \ref{SPMTJumpRegion} in Section \ref{sect-Stability}.

Throughout the rest of the paper we will use the slightly less cumbersome notation $U_T^{i,k} : =  U_{t_1^k}^{i,t_2^k}$.

\begin{lem}\label{VolumeEstimate} If $\Sigma_t$ is a solution to IMCF for $t \in [0,T]$ such that
\begin{align}
\int_{\Sigma_t} \frac{1}{H(x,t)} d \mu \le h(t) \text{ for }t \in [0,T]
\end{align}
 where $h \in L^1([0,T])$ then
\begin{align}
Vol(U_T) \le |\Sigma_0| e^T\int_0^T h(t) dt \le C.
\end{align}
In addition we find
\begin{align}
Vol(U_T\setminus U_T^k) \le |\Sigma_0| e^T\left(\int_0^{t_1^k} h(t)  dt + \int_{t_2^k}^T  h(t)  dt\right)
\end{align}
\end{lem}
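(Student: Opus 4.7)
}
The plan is to exploit the block--diagonal form of the IMCF metric to write the volume of $U_T$ as an iterated integral and then apply the pointwise bound on $1/H$ together with the exponential growth of area. Concretely, since
\[
\hat{g} = \frac{1}{H(x,t)^2}\, dt^{2} + g(x,t),
\]
the coordinate blocks are orthogonal, so the determinant factors as $\det \hat{g} = H^{-2}\det g$ and therefore the Riemannian volume form on $U_T$ splits as
\[
dV \;=\; \frac{1}{H(x,t)}\, d\mu_t\, dt,
\]
where $d\mu_t$ is the induced area form on $\Sigma_t$. This is the essential structural identity; the rest is just integration.

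Next, I would apply Fubini to obtain
\[
\mathrm{Vol}(U_T) \;=\; \int_0^T \int_{\Sigma_t} \frac{1}{H(x,t)}\, d\mu_t\, dt,
\]
and then invoke the hypothesis (interpreted, in line with \eqref{HBlowUpRate}, as the pointwise bound $1/H(x,t)\le h(t)$) together with Lemma \ref{naiveEstimate}, which gives $|\Sigma_t| = |\Sigma_0|\,e^t$. The inner integral is then bounded by $h(t)\,|\Sigma_0|\,e^t$, and using $e^t \le e^T$ on $[0,T]$ followed by the $L^1$ assumption on $h$ yields
\[
\mathrm{Vol}(U_T) \;\le\; |\Sigma_0|\, e^T \int_0^T h(t)\, dt \;<\; \infty.
\]

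Finally, for the complement estimate I would note that under the IMCF parametrization
\[
U_T \setminus U_T^{k} \;=\; \{\, x\in \Sigma_t : t\in [0,t_1^k]\cup[t_2^k,T]\,\},
\]
so repeating the identical Fubini-plus-$|\Sigma_t|=|\Sigma_0|e^t$ computation over this reduced $t$-domain and splitting the integral yields
\[
\mathrm{Vol}(U_T\setminus U_T^k) \;\le\; |\Sigma_0|\, e^T \left(\int_0^{t_1^k} h(t)\, dt + \int_{t_2^k}^{T} h(t)\, dt \right),
\]
as claimed. I do not anticipate any substantive obstacle: the only nontrivial ingredient is the block--diagonal structure of $\hat{g}$ in IMCF coordinates, and the rest is Fubini plus the already--established area identity $|\Sigma_t| = |\Sigma_0|e^t$.
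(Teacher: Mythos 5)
Your proposal is correct and follows essentially the same route as the paper: the paper likewise writes $Vol(U_T)=\int_0^T\int_{\Sigma_t}\frac{1}{H}\,d\mu\,dt$, bounds the inner integral by $|\Sigma_t|h(t)$ (i.e.\ it too uses the pointwise interpretation $1/H\le h(t)$ consistent with \eqref{HBlowUpRate}, despite the integral form of the stated hypothesis), and invokes $|\Sigma_t|=|\Sigma_0|e^t$ before splitting the $t$-domain for the complement estimate. Your explicit justification of $dV=\frac{1}{H}\,d\mu_t\,dt$ from the block-diagonal form of $\hat g$ is a detail the paper leaves implicit, but there is no substantive difference.
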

\begin{proof}
We calculate
\begin{align}
Vol(M_i) &= \int_0^{T}\int_{\Sigma_t} \frac{1}{H_i(x,t)} d \mu dt \le \int_0^{T} |\Sigma_t|h(t)  dt  = |\Sigma_0|\int_0^{T} e^th(t)  dt 
\end{align}
and
\begin{align}
Vol(M_i\setminus M_i^k)) &= \int_0^{t_1^k}\int_{\Sigma_t} \frac{1}{H_i(x,t)} d \mu dt + \int_{t_2^k}^T\int_{\Sigma_t} \frac{1}{H_i(x,t)} d \mu dt
\\&\le \int_0^{t_1^k} |\Sigma_t|h(t)  dt + \int_{t_2^k}^T |\Sigma_t|h(t)  dt
\\&\le |\Sigma_0| \left( \int_0^{t_1^k} e^t h(t)  dt + \int_{t_2^k}^T e^th(t)  dt \right)
\end{align}
\end{proof}

\begin{lem}\label{DiameterEstimate}
If $\Sigma_t$ is a solution to IMCF for $t \in [0,T]$ such that $Diam(\Sigma_t) \le D$ and 
\begin{align}
 \frac{1}{H(x,t)}  \le h(t) \text{ for }t \in [0,T]
\end{align}
 where $h \in L^1([0,T])$ then
\begin{align}
Diam(U_T) \le  \int_0^T h(t) dt + D \le C.
\end{align}
\end{lem}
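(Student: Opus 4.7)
The plan is to give an explicit path between any two points of $U_T$ using IMCF flow lines and intrinsic curves within leaves. Given $p,q \in U_T$, write $p \in \Sigma_{t_p}$ and $q \in \Sigma_{t_q}$, and assume without loss of generality that $t_p \le t_q$. I will construct a piecewise curve from $p$ to $q$ in two segments: a vertical IMCF segment that raises the time coordinate and a horizontal segment within a single leaf.

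First I would follow the IMCF flow line through $p$ from time $t_p$ to time $t_q$, producing a curve $\alpha(t) = F(p,t)$ ending at a point $p' \in \Sigma_{t_q}$. Since $\partial_t F = \nu/H$ and $\nu$ is a $\hat g$-unit vector normal to $\Sigma_t$, the $\hat g$-speed of $\alpha$ equals $1/H(F(p,t),t)$. Using the hypothesis $1/H(x,t) \le h(t)$, the length of this segment is at most
\begin{align*}
L(\alpha) = \int_{t_p}^{t_q} \frac{1}{H(F(p,t),t)}\, dt \le \int_{t_p}^{t_q} h(t)\, dt \le \int_0^T h(t)\, dt.
\end{align*}
Next I would connect $p'$ to $q$ by a curve $\beta$ lying entirely in $\Sigma_{t_q}$ whose intrinsic length on $\Sigma_{t_q}$ is at most $\operatorname{Diam}(\Sigma_{t_q}) + \varepsilon \le D + \varepsilon$ (take an almost-minimizing curve). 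Because the induced metric on $\Sigma_{t_q}$ dominates the ambient $\hat g$-length of any curve tangent to $\Sigma_{t_q}$, the ambient length of $\beta$ in $U_T$ is also bounded by $D + \varepsilon$.

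Concatenating $\alpha$ and $\beta$ yields a curve in $U_T$ joining $p$ to $q$ of total $\hat g$-length at most $\int_0^T h(t)\, dt + D + \varepsilon$. Since $\varepsilon > 0$ was arbitrary, $d_{U_T}(p,q) \le \int_0^T h(t)\, dt + D$, and taking the supremum over $p,q \in U_T$ gives the diameter bound. The final inequality $\int_0^T h(t)\, dt + D \le C$ follows from $h \in L^1([0,T])$ together with the uniform diameter bound $D$. There is no real obstacle here; the argument is a direct application of the IMCF equation together with the diameter hypothesis on the leaves, and is the natural companion estimate to Lemma~\ref{VolumeEstimate}.
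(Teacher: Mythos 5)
Your proposal is correct and follows essentially the same route as the paper: the paper's proof also joins $p$ to $q$ by first travelling purely in the $t$ direction (along the flow line, with $\hat g$-speed $1/H \le h(t)$, so length at most $\int_0^T h(t)\,dt$) and then travelling within the final leaf $\Sigma_{t_2}$, bounded by $D$. The only cosmetic difference is your $\varepsilon$-almost-minimizing curve in the leaf, which the paper handles implicitly by writing $d_{\Sigma}(q',q) \le D$ directly.
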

\begin{proof}
Let $p,q \in U_T$ so that $p \in \Sigma \times \{t_1\}$ and $q \in \Sigma\times\{t_2\}$ where $t_1,t_2 \in [0,T]$ and $t_1\le t_2$. Now define a curve $\gamma(s)$, parameterized by arc length on $\Sigma\times [0,T]$, which travels from $p$ to $q$ by first traveling solely in the $t$ direction from $p\in\Sigma_{t_1}$ to $q' \in \Sigma_{t_2}$ and then travels solely in the $\Sigma_{t_2}$ factor from $q'$ to $q$. Then we calculate
\begin{align}
d_{\hat{g}}(p,q) &\le L_{\hat{g}}(\gamma) = \int_{t_1}^{t_2}\frac{1}{H(\gamma(s))}ds + d_{\Sigma}(q',q)
\\& \le \int_0^T h(t) dt + D
\end{align}
\end{proof}

Now that we have obtained control on volume and diameter we move to control the uniformly well embedded property of $U_T^{i,k} \subset U_T^i$ which when combined with Lemma \ref{VolumeEstimate} and Lemma \ref{DiameterEstimate} will allow us to apply Theorem \ref{SWIFonCompactSets}.

\begin{lem}\label{DistancePreservingEstimate}
If $\Sigma_t^i$ is a solution to IMCF for $t \in [0,T]$ such that $Diam(\Sigma_t^i) \le D$,
\begin{align}
 \frac{1}{H_i(x,t)}  \le h(t) \text{ for }t \in [0,T]
\end{align}
 where $h \in L^1([0,T])$, 
 \begin{align}\label{distanceAssumption2}
 \liminf_{i \rightarrow \infty} d_{\Sigma_0^i}(\theta_1,\theta_2) \ge d_{S^2(r_0)}(\theta_1,\theta_2) \text{ } \forall  \theta_1,\theta_2 \in S^2, 
 \end{align} 
 \begin{align}
 \Sigma_{t_j^k} \rightarrow S^2(r_0e^{t_j^k}) \text{ in } C^0 \text{ for } j=1,2
 \end{align} 
uniformly in $i$ for each fixed $k \in \N$, and
 \begin{align} 
A^i(\cdot,\cdot) >0 \text{ on } U_T^i \setminus (U_T^{i,K}\cup \partial U_T^i)\label{convexity}
 \end{align}
 for some fixed $K \in \N$ then
\begin{align}
\lim_{k \rightarrow \infty}\limsup_{i \rightarrow \infty}\sup_{p,q \in U_T^{i,k}}(d_{U_T^{i,k}}(p,q) - d_{U_T^i}(p,q)) =0\label{WellEmbeddedToZero}.
\end{align}
\end{lem}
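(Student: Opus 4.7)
The plan is to project any near-minimizing curve $\gamma$ in $U_T^i$ joining $p, q \in U_T^{i,k}$ onto $U_T^{i,k}$ using the IMCF parameterization, and to bound the resulting length excess in the iterated limit. In the foliation coordinates $(x,t) \in \Sigma \times [0,T]$ define
\[
\pi(x,t) \ =\ \bigl(x,\ \max(t_1^k, \min(t, t_2^k))\bigr)\ :\ U_T^i \to U_T^{i,k},
\]
which clamps the flow parameter into $[t_1^k, t_2^k]$ while leaving the leafwise coordinate fixed. Then $\pi \circ \gamma$ joins $p$ to $q$ inside $U_T^{i,k}$, so it suffices to show $L(\pi \circ \gamma) - L(\gamma) \to 0$ as $i \to \infty$ then $k \to \infty$, uniformly in $p, q$.

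Decompose $\gamma$ into its body portion in $U_T^{i,k}$ and its excursions into the outer cap $\{t > t_2^k\}$ and inner cap $\{t < t_1^k\}$. On the body $L(\pi\gamma) = L(\gamma)$ trivially. The outer cap is handled by the strict convexity hypothesis \eqref{convexity}: since $\partial_t g^i = 2A^i / H^i \ge 0$ there, the leafwise metric satisfies $g^i(\cdot, t_2^k) \le g^i(\cdot, t)$ for $t \ge t_2^k$, and so $|\theta'(s)|_{g^i(\cdot, t_2^k)} \le |\gamma'(s)|_{\hat g^i}$ throughout each outer excursion. Consequently the outer-cap contribution to $L(\pi\gamma)$ is no larger than that of $L(\gamma)$.

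The inner cap is the main difficulty because the analogous monotonicity runs the wrong way, and projecting onto $\Sigma_{t_1^k}^i$ can elongate the spatial trace. For each maximal inner excursion $\gamma|_{[a_j, b_j]}$ with entry/exit points $x_j, y_j \in \Sigma_{t_1^k}^i$, I would replace $\gamma|_{[a_j, b_j]}$ with a minimizing $\Sigma_{t_1^k}^i$-geodesic of length $d_{\Sigma_{t_1^k}^i}(x_j, y_j)$. The convexity-based monotonicity now produces a lower bound
\[
L(\gamma|_{[a_j, b_j]}) \ \ge\ \int_{a_j}^{b_j} |\theta'|_{g^i(\cdot, 0)}\, ds \ \ge\ d_{\Sigma_0^i}(\bar x_j, \bar y_j),
\]
where $\bar x_j, \bar y_j \in \Sigma_0^i$ are the IMCF-projections of $x_j, y_j$. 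Combining the uniform $C^0$ convergence $d_{\Sigma_{t_1^k}^i} \to d_{S^2(r_0 e^{t_1^k})}$ with the distance hypothesis \eqref{distanceAssumption2}, first upgraded to a uniform liminf via an $\varepsilon$-net in $S^2$ and the 1-Lipschitzness of the distance functions, yields a per-excursion bound
\[
d_{\Sigma_{t_1^k}^i}(x_j, y_j) - L(\gamma|_{[a_j, b_j]}) \ \le\ \bigl(e^{t_1^k/2} - 1\bigr)\, d_{S^2(r_0)}(\bar x_j, \bar y_j) + \eta_{i,k},
\]
with $\eta_{i,k} \to 0$ as $i \to \infty$ for each fixed $k$. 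Summing over excursions and using Lemma \ref{DiameterEstimate} (which gives $L(\gamma) \le C$) together with $\sum_j d_{\Sigma_0^i}(\bar x_j, \bar y_j) \le L(\gamma)$ produces a total excess bounded by $(e^{t_1^k/2} - 1) C + N \eta_{i,k}$, where $N$ is the number of inner excursions.

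The main obstacle is controlling $N \eta_{i,k}$, as a near-minimizer could in principle oscillate into the thin inner cap many times. I expect to handle this by a preliminary coalescence step exploiting strict convexity of $\Sigma_{t_1^k}^i$: any two consecutive inner excursions separated by a short body segment near $\Sigma_{t_1^k}^i$ can be merged, or replaced by a single $\Sigma_{t_1^k}^i$-geodesic, at negligible additional cost, iterating until $N$ is a uniformly bounded constant. Once $N$ is bounded independently of $i$, sending $i \to \infty$ kills the $N \eta_{i,k}$ term for each fixed $k$, and then $k \to \infty$ forces $(e^{t_1^k/2} - 1) C \to 0$, yielding \eqref{WellEmbeddedToZero}.
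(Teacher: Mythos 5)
Your overall strategy is the same as the paper's: replace the portion of a competitor curve that leaves $U_T^{i,k}$ by a curve in the boundary leaf $\Sigma_{t_1^k}^i$, and control the resulting excess by comparing $d_{\Sigma_{t_1^k}^i}$ and $d_{\Sigma_0^i}$ to the corresponding round spheres of radii $r_0e^{t_1^k/2}$ and $r_0$ via the $C^0$ hypothesis and \eqref{distanceAssumption2}. Your treatment of the outer cap by leafwise metric monotonicity ($\partial_t g^i = 2A^i/H_i$ with $A^i>0$ there) is sound, and your observation that the pointwise liminf in \eqref{distanceAssumption2} must be upgraded to a uniform statement over pairs of points is a legitimate point that the paper's write-up glosses over.

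The genuine gap is exactly the one you flag and do not close: the number $N$ of inner excursions. For fixed $k$ your total error is of the form $(e^{t_1^k/2}-1)C + N_{i,k}\,\eta_{i,k}$, and since $N_{i,k}$ may a priori grow with $i$ while $\eta_{i,k}\to 0$, the iterated limit $\limsup_{i}$ then $\lim_k$ is not controlled. The proposed ``coalescence'' step does not repair this: merging two consecutive excursions, or replacing them by a single $\Sigma_{t_1^k}^i$-geodesic, requires comparing that geodesic to the concatenated original pieces, which reintroduces precisely the per-excursion error $\eta_{i,k}$ you are trying to eliminate, and nothing forces the iteration to terminate at an $i$-independent $N$. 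The paper avoids the issue entirely by working with the exact $\hat g^i$-minimizing geodesic and invoking the geodesic equations in IMCF coordinates (Corollary \ref{IMCFgeodCor}, in particular \eqref{ConvexityMins} and \eqref{TangenttoLeavesofFoliation}): the hypothesis $A^i>0$ on $U_T^i\setminus(U_T^{i,K}\cup\partial U_T^i)$ forces every interior critical point of the $t$-coordinate along the geodesic to be of one type only, so for $k\ge K$ the geodesic has at most one dip below $\Sigma_{t_1^k}^i$ and never rises above $\Sigma_{t_2^k}^i$; the bottom piece is either a single point or a minimizing curve lying in $\Sigma_0^i$. That structural fact gives $N\le 1$ and is the missing ingredient; with it your estimate closes, without it the argument does not.
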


\begin{proof}
Consider $p,q \in  U_T^{i,k}$ and let the curve $C_i(s)=(T_i(s),\vec{\theta}_i(s))$ be the length minimizing curve between $p,q$ with respect to $\hat{g}^i$ in $U_T^i$. If we let $k \ge K$ then by the assumption that $A^i(\cdot,\cdot) >0$ on $U_T^i \setminus (U_T^{i,K}\cup \partial U_T^i)$ we know by Corollary \ref{IMCFgeodCor} \eqref{ConvexityMins} that $T_i$ can only have minimums and no maximums of $T$ in $s$ and hence a length minimizing curve between $p$ and $q$ can only have one minimum of $T$ in $s$. 

First, this tells us that if $C_i$ does not pass through $\Sigma_{t_1^k}$ then $C_i \subset U_T^{i,k}$ and hence there is no argument to make. If $C_i$ does pass through $\Sigma_{t_1^k}$ then this allows us to decompose $C_i$ as
\begin{align}
C_i = C_i^1 \cup C_i^2 \cup C_i^3
\end{align}
where $C_i^1$ is the piece of $C_i$ up to the first minimum in $T_i$, $C_i^2$ is the part of $C_i$ at the minimum in $T_i$ and $C_i^3$ is the part of $C_i$ after the only minimum in $T_i$. Note that $C_i^2$ is either a point in some $\Sigma_{t_i}$ or a length minimizing curve in $\partial U_T^i$ by \eqref{TangenttoLeavesofFoliation} and assumption \eqref{convexity}. We also note that $C_i^1$ and $C_i^2$ are monotone in $t$.

Notice that if $C_i^2$ is a length minimizing curve in $\partial U_T^i$ with endpoints $\theta_1, \theta_2$ such that 
\begin{align}
\liminf_{i \rightarrow \infty} d_{\Sigma_0^i}(\theta_1,\theta_2) > d_{S^2(r_0)}(\theta_1,\theta_2) \text{ } \forall  \theta_1,\theta_2 \in S^2,\label{DistStrictlyLarger}
\end{align}
then there exists a $\bar{t} \in (0,T)$ such that 
\begin{align}
\liminf_{i \rightarrow \infty} d_{\Sigma_0^i}(\theta_1,\theta_2) > d_{S^2(r_0e^{\bar{t}/2})}(\theta_1,\theta_2)> d_{S^2(r_0)}(\theta_1,\theta_2) \text{ }\forall  \theta_1,\theta_2 \in S^2.
\end{align}
This implies that for $i$ chosen large enough it would be shorter to connect $C_i^1$ and $C_i^3$ through a curve that lies in $\Sigma_{\bar{t}}^i$ because $\Sigma_{\bar{t}}^i \rightarrow S^2(r_0e^{\bar{t}/2})$ as $i \rightarrow \infty$. This would contradict the assumption that $C_i^2$ is a length minimizing curve in $\partial U_T^i$ and so we may assume that $C_i^2$ is a point which lies in $\Sigma_{t_i}$ or equality is achieved in \eqref{distanceAssumption2} with $C_i^2 \subset \Sigma_0$. 

\begin{figure}[H]\label{fig CurvesAppDistance}
\begin{tikzpicture}[scale=.5]
\draw (0,0) circle (1cm);
\draw[dashed] (0,0) circle (2cm);
\draw[dashed] (0,0) circle (5cm);
\draw (0,0) circle (6cm);
\draw (-2.2,4.2) node{p};
\draw[fill=black] (-1.98,3.98) circle (.08cm);%p
\draw (4.2,-2.2) node{q};
\draw[fill=black] (3.98,-1.98) circle (.08cm);%q
\draw (-2,4) .. controls (-.25,1.5) .. (.707,.707);%p to m1
\draw (.707,.707) .. controls (1.5,-.25) .. (4,-2);%m3 to q
\draw (6.5,2.8) node{$U_T^i$};
\draw (-2.5,-2.5) node{$U_T^{i,k}$};
\draw (-2.1,2.6) node{$C_i^1$};
\draw (.2,.2) node{$C_i^2$};
\draw (1.8,1.8) node{$\bar{C}_i^2$};
\draw (2.6,-2.1) node{$C_i^3$};
\draw (0,-1.5) node{$\Sigma_0$};
\draw (0,-2.6) node{$\Sigma_{t_1^k}$};
\draw (0,-5.5) node{$\Sigma_{t_2^k}$};
\draw (0,-6.6) node{$\Sigma_T$};
\draw[line width=1mm] (1.93,-.55) arc(-15:105:2);
\draw[line width=.7mm] (.86,.49) arc(30:60:1);
\draw[fill=black] (-.5,1.9) circle (.08cm);%r1
\draw[fill=black] (.5,.82) circle (.08cm);%m1
\draw[fill=black] (.86,.49) circle (.08cm);%m3
\draw[fill=black] (1.92,-.52) circle (.08cm);%r3
\end{tikzpicture}
\caption{Curves approximating the distance between $p,q \in U_T^{i,k}$.}
\end{figure}
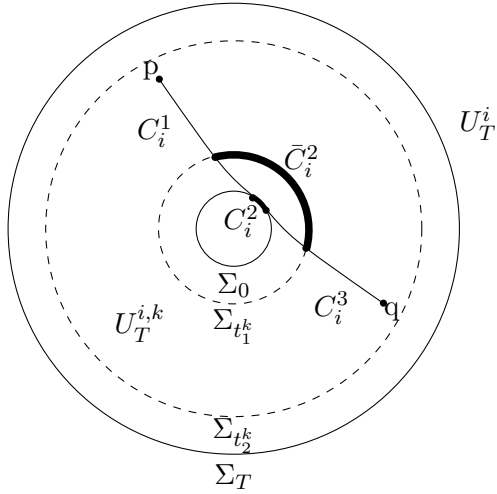

Now we define the curve $\bar{C}_i(s)$ between $p$ and $q$ inside of $U_T^{i,k}$ as
\begin{align}
\bar{C}_i = C_i^1|_{U_T^{i,k}} \cup \bar{C}_i^2 \cup C_i^3|_{U_T^{i,k}}
\end{align}
where $\bar{C}_i^2$ is a curve contained in $\partial U_T^{i,k}$, connecting the endpoints of $C_i^1|_{U_T^{i,k}}$ and $C_i^3|_{U_T^{i,k}}$, to be specified later. Then we note that 
\begin{align}
d_{U_T^i}(p,q) &= L_{\hat{g}^i}(C_i) 
\\ d_{U_T^{i,k}}(p,q) &\le L_{\hat{g}^i}(\bar{C}_i)
\end{align}
and hence
\begin{align}
0 &\le d_{U_T^{i,k}}(p,q) - d_{U_T^i}(p,q) \le L_{\hat{g}^i}(\bar{C}_i)-L_{\hat{g}^i}(C_i) 
\\& =-L_{\hat{g}^i}(C_i^1|_{U_T \setminus U_T^{i,k}})+ L_{g_i(\cdot,t_i^k)}(\bar{C}_i^2) - L_{g(\cdot,0)}(C_i^2)- L_{\hat{g}^i}(C_i^3|_{U_T \setminus U_T^{i,k}}).
\end{align}

Let $r_{i,k}^1,\bar{l}_{i,k}^1 \in U_T^i$ be the endpoints of $C_i^1|_{U_T \setminus U_T^{i,k}}$ where $r_{i,k}^1 \in \partial U_T^{i,k}$ and $\bar{l}_{i,k} \in \partial \Sigma_{t_i}$ (See figure \ref{fig ManyPointsDifferentCurves}). Then if we let $\bar{m}_{i,k}^1 \in \Sigma_{t_i}$ be the point which would be reached from $r_{i,k}^1$ if one traveled purely in the $t$ direction then we notice
\begin{align}
d_{\hat{g}^i}(r_{i,k}^1,\bar{l}_{i,k}^1) &\ge d_{\hat{g}^i}(\bar{m}_{i,k}^1,\bar{l}_{i,k}^1) - d_{\hat{g}^i}(r_{i,k}^1,\bar{m}_{i,k}^1)    
\end{align}
Now if we let $m_{i,k}^1, l_{i,k}^1  \in \Sigma_0$ be the points which would be reached from $\bar{m}_{i,k}^1, \bar{l}_{i,k}^1$, respectively, if one traveled purely in the $t$ direction then we notice
\begin{align}
d_{\hat{g}^i}(\bar{m}_{i,k}^1,l_{i,k}^1) &\ge d_{\hat{g}^i}(m_{i,k}^1,l_{i,k}^1) - d_{\hat{g}^i}(\bar{m}_{i,k}^1, m_{i,k}^1 )  - d_{\hat{g}^i}(\bar{l}_{i,k}^1,l_{i,k}^1)  
\\&=  d_{g^i(\cdot,0)}(m_{i,k}^1,l_{i,k}^1) - d_{\hat{g}^i}(\bar{m}_{i,k}^1, m_{i,k}^1 )  - d_{\hat{g}^i}(\bar{l}_{i,k}^1,l_{i,k}^1)  \label{distanceLowerBoundTriangleInequality}
\end{align}
where we remind the reader that $g^i(\cdot,t)$ is the induced metric on $\Sigma_t^i$ from $U_T^i$ and hence the last equality follows from the convexity assumption since $m_{i,k}^1, l_{i,k}^1 \in \partial U_T^i$. If we repeat this procedure for $C_i^3|_{U_T \setminus U_T^{i,k}}$ we will obtain points $r_{i,k}^3, l_{i,k}^3, \bar{l}_{i,k}^3, m_{i,k}^3, \bar{m}_{i,k}^3$ with a similar estimate. Note that $ l_{i,k}^j, m_{i,k}^j$ could be the same as $\bar{l}_{i,k}^j, \bar{m}_{i,k}^j$, $j=1,3$, respectively, if $t_i=0$.

\begin{figure}[H]\label{fig ManyPointsDifferentCurves}
\begin{tikzpicture}[scale=1]
\draw[line width=.3mm] (.7072,.7072) arc (45:135:1);
\draw[dashed] (2*.7072,2*.7072) arc (45:135:2);
\draw[dashed] (3*.7072,3*.7072) arc (45:135:3);
\draw[fill=black] (.7072,.7072) circle (.08cm);%l
\draw[fill=black] (-.7072,.7072) circle (.08cm);%m
\draw[fill=black] (2*.7072,2*.7072) circle (.08cm);%barl
\draw[fill=black] (2*-.7072,2*.7072) circle (.08cm);%barm
\draw[fill=black] (3*-.7072,3*.7072) circle (.08cm);%r
\draw (-2.3,1.8) node{$r_{i,k}^1$};
\draw (-.8,0) node{$m_{i,k}^1$};
\draw (.8,0) node{$l_{i,k}^1$};
\draw (-1.8,1) node{$\bar{m}_{i,k}^1$};
\draw (1.9,1) node{$\bar{l}_{i,k}^1$};
\draw[line width=.3mm] (-.7072,.7072) -- (3*-.7072,3*.7072);
\draw[line width=.3mm] (.7072,.7072) -- (2*.7072,2*.7072);
\draw[dashed] (2*.7072,2*.7072) -- (3*.7072,3*.7072);
\draw (3*-.7072,3*.7072) .. controls (.5,2.5) .. (2*.7072,2*.7072);
\draw (.25,.5) node{$\Sigma_0$};
\draw (.9,1.4) node{$\Sigma_{t_i}$};
\draw (1.5,2.2) node{$\Sigma_{t_1^k}$};
\end{tikzpicture}
\caption{Points and curves used to approximate $d_{\hat{g}^i}(r_{i,k}^1,\bar{l}_{i,k}^1)$.}
\end{figure}
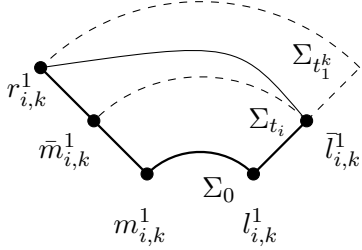

Notice that if $C_i^2$ is a point and \eqref{DistStrictlyLarger} holds then again for $i$ chosen large enough
\begin{align}
d_{g^i(\cdot,0)}(m_{i,k}^1,l_{i,k}^1) \ge d_{S^2(r_0)}(m_{i,k}^1,l_{i,k}^1)\label{DealingWithAPoint1}
\end{align}
which will only make the argument in \eqref{FirstDifference}, \eqref{SecondDifference}, and \eqref{ThirdDifference} easier and so we proceed with the more difficult case and make a note of where \eqref{DealingWithAPoint1}  factors into the argument later.

If equality is achieved in \eqref{distanceAssumption2} then for any subsequence where \eqref{DistStrictlyLarger} holds we can make the same argument as above to handle this case. So we may assume for the remainder of the argument that we are dealing with a susbequence such that
\begin{align}
\lim_{i \rightarrow \infty} d_{\Sigma_0^i}(\theta_1,\theta_2) = d_{S^2(r_0)}(\theta_1,\theta_2) \text{ } \forall  \theta_1,\theta_2 \in S^2.\label{DistEqual}
\end{align}
Then \eqref{distanceLowerBoundTriangleInequality} implies
\begin{align}
 0&\le d_{U_T^{i,k}}(p,q) - d_{U_T^i}(p,q) 
 \\&\le d_{\hat{g}^i}(\bar{m}_{i,k}^1, m_{i,k}^1 )  + d_{\hat{g}^i}(\bar{l}_{i,k}^1,l_{i,k}^1) + d_{\hat{g}^i}(r_{i,k}^1,m_{i,k}^1) - d_{g^i(\cdot,0)}(m_{i,k}^1,l_{i,k}^1)  
 \\&+ L_{g^i(\cdot,t_1^k)}(\bar{C}_i^2) - L_{g^i(\cdot,0)}(C_i^2)
 \\& d_{\hat{g}^i}(\bar{m}_{i,k}^3, m_{i,k}^3 )  + d_{\hat{g}^i}(\bar{l}_{i,k}^3,l_{i,k}^3) +d_{\hat{g}^i}(r_{i,k}^3,m_{i,k}^3) -  d_{g^i(\cdot,0)}(m_{i,k}^3,l_{i,k}^3)
 \\&\le d_{\hat{g}^i}(\bar{m}_{i,k}^1, m_{i,k}^1 )  + d_{\hat{g}^i}(\bar{l}_{i,k}^1,l_{i,k}^1)+d_{\hat{g}^i}(r_{i,k}^1,m_{i,k}^1)
 \\&+d_{\hat{g}^i}(r_{i,k}^3,m_{i,k}^3)+d_{\hat{g}^i}(\bar{m}_{i,k}^3, m_{i,k}^3 )  + d_{\hat{g}^i}(\bar{l}_{i,k}^3,l_{i,k}^3)
 \\&+L_{g^i(\cdot,t_1^k)}(\bar{C}_i^2) - L_{g^i(\cdot,0)}(C_i^2)-d_{g^i(\cdot,0)}(m_{i,k}^1,l_{i,k}^1)- d_{g^i(\cdot,0)}(m_{i,k}^3,l_{i,k}^3).
\end{align}
Notice that for every distance between points which only differ by a $t$ coordinate we can easily estimate
\begin{align}
d_{\hat{g}^i}(r_{i,k}^j,m_{i,k}^j) \le \int_0^{t_k} \frac{1}{H_i^2} dt \le \int_0^{t_k}h(t) dt
\end{align}
which is independent of $i$ and goes to $0$ as $k \rightarrow \infty$ and so now we move to estimate the remaining terms.

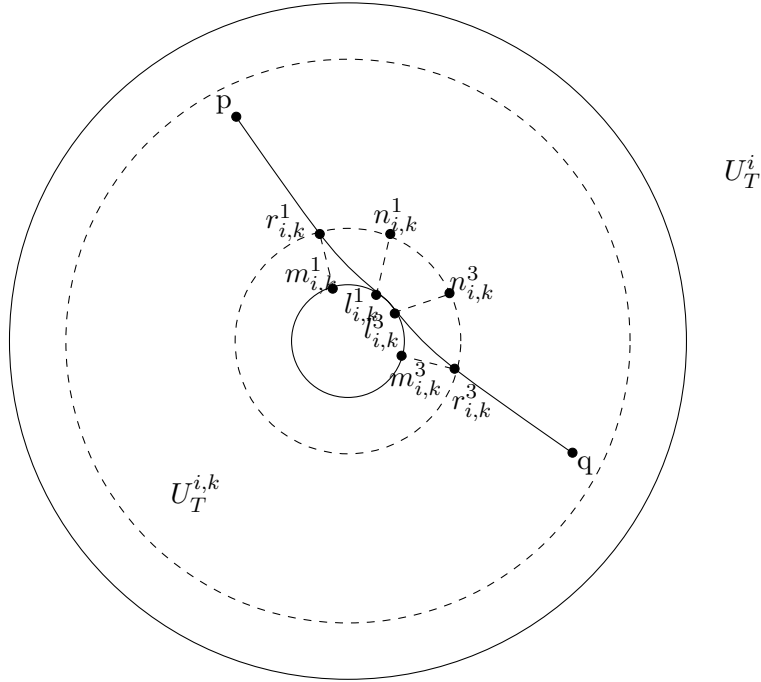
\begin{figure}[H]
\begin{tikzpicture}[scale=.75]
\draw (0,0) circle (1cm);
\draw[dashed] (0,0) circle (2cm);
\draw[dashed] (0,0) circle (5cm);
\draw (0,0) circle (6cm);
\draw (-2.2,4.2) node{p};
\draw[fill=black] (-1.98,3.98) circle (.08cm);%p
\draw (4.2,-2.2) node{q};
\draw[fill=black] (3.98,-1.98) circle (.08cm);%q
\draw (-2,4) .. controls (-.25,1.5) .. (.707,.707);
\draw (.707,.707) .. controls (1.5,-.25) .. (4,-2);
\draw (7,3) node{$U_T^i$};
\draw (-2.7,-2.7) node{$U_T^{i,k}$};
\draw[dashed] (.5,.8) -- (.75,1.9);%m1 to n1
\draw[fill=black] (.5,.82) circle (.08cm);%m1
\draw[fill=black] (.75,1.9) circle (.08cm);%n1
\draw[dashed] (-0.25,.9) -- (-.5,1.9);%l1 to r1
\draw[fill=black] (-.27,.93) circle (.08cm);%l1
\draw[fill=black] (-.5,1.9) circle (.08cm);%r1
\draw[dashed] (.9,-.25) -- (1.9,-.5);%l3 to r3
\draw[fill=black] (.95,-.26) circle (.08cm);%l3
\draw[fill=black] (1.89,-.49) circle (.08cm);%r3
\draw[dashed] (.8,.5) -- (1.85,.85);%m3 to n3
\draw[fill=black] (.83,.49) circle (.08cm);%m3 
\draw[fill=black] (1.8,.85) circle (.08cm);%n3
\draw (-1.1,2.1) node{$r_{i,k}^1$};
\draw (-.65,1.15) node{$m_{i,k}^1$};
\draw (0.25,.6) node{$l_{i,k}^1$};
\draw (.85,2.2) node{$n_{i,k}^1$};
\draw (2.2,-1.1) node{$r_{i,k}^3$};
\draw (.6,.15) node{$l_{i,k}^3$};
\draw (1.2,-.7) node{$m_{i,k}^3$};
\draw (2.2,1) node{$n_{i,k}^3$};
\end{tikzpicture}
\caption{Geodesic between $p$ and $q$ with respect to $\hat{g}^i$ with approximating curves.}
\end{figure}

Let $n_{i,k}^j \in \partial U_T^{i,k}$, $j=1,3$, be the point which would be reached from $l_{i,k}^j$ if one traveled purely in the $t$ direction. Now we choose $\bar{C}_i^2$ to be a length minimizing geodesic with respect to $g_i(\cdot,t_i^k)$ connecting $r_{i,k}^1$, $n_{i,k}^1$, $n_{i,k}^3$, and $r_{i,k}^3$ (Note this implies that it is most likely not length minimizing from $r_{i,k}^1$ to $r_{i,k}^3$).

Now we estimate
\begin{align}
&L_{g^i(\cdot,t_1^k)}(\bar{C}_i^2) - L_{g_i(\cdot,0)}(C_i^2)-d_{g^i(\cdot,0)}(m_{i,k}^1,l_{i,k}^1)- d_{g^i(\cdot,0)}(m_{i,k}^3,l_{i,k}^3)\label{MainEqEnding}
\\&\le|d_{g^i(\cdot,t_1^k)}(r_{i,k}^1,n_{i,k}^1) -d_{g^i(\cdot,0)}(m_{i,k}^1,l_{i,k}^1)| 
\\&+|d_{g^i(\cdot,t_1^k)}(n_{i,k}^1,n_{i,k}^3) -d_{g^i(\cdot,0)}(l_{i,k}^1,l_{i,k}^3)|\label{Eq2EliminatedIfPoint}
\\&+|d_{g^i(\cdot,t_1^k)}(r_{i,k}^3,n_{i,k}^3) -d_{g^i(\cdot,0)}(m_{i,k}^3,l_{i,k}^3)|.
\end{align}
 Note that if $C_i^2$ is a point then $l_{i,k}^1=l_{i,k}^3$ and $n_{i,k}^1=n_{i,k}^3$ which eliminates \eqref{Eq2EliminatedIfPoint}. Notice that each of the three terms can be estimated in a similar way  and so we perform the estimate for only one of the terms as follows:
\begin{align}
&|d_{g^i(\cdot,t_1^k)}(r_{i,k}^1,n_{i,k}^1) -d_{g^i(\cdot,0)}(m_{i,k}^1,l_{i,k}^1)| \le
\\&|d_{r_0^2 e^{t_1^k}\sigma}(r_{i,k}^1,n_{i,k}^1) -d_{g^i(\cdot,t_1^k)}(r_{i,k}^1,n_{i,k}^1)| \label{FirstDifference}
\\+ & |d_{r_0^2e^{t_1^k}\sigma}(r_{i,k}^1,n_{i,k}^1) -d_{r_0^2 \sigma}(m_{i,k}^1,l_{i,k}^1)| \label{SecondDifference}
\\+& |d_{r_0^2\sigma}(m_{i,k}^1,l_{i,k}^1) -d_{g_i(\cdot,0)}(m_{i,k}^1,l_{i,k}^1)|. \label{ThirdDifference}
\end{align}
The \eqref{FirstDifference} and \eqref{ThirdDifference} go to $0$ as $i \rightarrow \infty$ by assumption (This is where \eqref{DealingWithAPoint1} is used to eliminate \eqref{ThirdDifference} if $C_i^2$ is a point and \eqref{DistStrictlyLarger} holds). Now we look at \eqref{SecondDifference} more closely and notice that since both metrics are on round spheres, $r_{i,k}^1$ to $l_{i,k}^1$ is purely in the $t$ direction, and $n_{i,k}^1$ to $m_{i,k}^1$ is purely in the $t$ direction then it is equivalent to just think of the metrics as living on the same parameterizing sphere. So if $m_{i,k}^1 = (t_0, \vec{\theta}_{i,k}^{1})$ and $l_{i,k}^1= (t_0, \vec{\theta}_{i,k}^{2})$ then we can write
\begin{align}
|d_{r_0^2e^{t_1^k}\sigma}(r_{i,k}^1,n_{i,k}^1) -d_{r_0^2 \sigma}(m_{i,k}^1,l_{i,k}^1)| = |d_{r_0^2e^{t_1^k}\sigma}( \vec{\theta}_{i,k}^{1}, \vec{\theta}_{i,k}^{2}) -d_{r_0^2 \sigma}( \vec{\theta}_{i,k}^{1}, \vec{\theta}_{i,k}^{2})|.
\end{align}

Now if we let $\vec{\theta}_N$ and $\vec{\theta}_S$ be the north and south pole on the sphere then we can estimate
\begin{align}
|d_{r_0^2e^{t_1^k}\sigma}(r_{i,k}^1,n_{i,k}^1) -d_{r_0^2 \sigma}(m_{i,k}^1,l_{i,k}^1)| \le |d_{r_0^2e^{t_1^k}\sigma}( \vec{\theta}_N, \vec{\theta}_S) -d_{r_0^2 \sigma}( \vec{\theta}_N,\vec{\theta}_S)|
\end{align}
which implies
\begin{align}
\lim_{k \rightarrow \infty} &\left ( \limsup_{i \rightarrow \infty}|d_{r_0^2e^{t_1^k}\sigma}(r_{i,k}^1,n_{i,k}^1) -d_{r_0^2 \sigma}(m_{i,k}^1,l_{i,k}^1)|\right)
\\& \le \lim_{k \rightarrow \infty} |d_{r_0^2e^{t_1^k}\sigma}( \vec{\theta}_N, \vec{\theta}_S) -d_{r_0^2 \sigma}( \vec{\theta}_N,\vec{\theta}_S)| = 0.
\end{align}
Putting this all together with \eqref{MainEqEnding} we achieve the desired estimate.
\end{proof}

\begin{rmrk}\label{RemovingConvexity}
It would be interesting to prove a similar result to Lemma \ref{DistancePreservingEstimate} without assumptions \eqref{distanceAssumption2} and/or \eqref{convexity}. A result of this kind would immediately imply a new stability result when combined with the results of this paper. It seems extremely difficult though to estimate \eqref{WellEmbeddedToZero} without these assumptions and hence the author is not even convinced that it is true.
\end{rmrk}

\subsection{Maximum Principle Estimates} \label{subsec-MaxPrincipleEstimates}

In this section we use lower bounds on Ricci curvature to obtain upper bounds on the mean curvature of a solution of IMCF via the maximum principle. These results will be used to obtain $C^0$ control on $\hat{g}^i$ from below which is an important step in proving Theorem \ref{SPMT}.

\begin{lem}
For $\Sigma_t$ a solution to IMCF so that $Rc(\nu,\nu) \ge -C$ for $C > 0$ then we find
\begin{align}
H(x,t) \le \sqrt{C_0 e^{-2t/n} + C n}
\end{align}
where $C_0 = \displaystyle \left (\max_{\Sigma_0}H\right)^2 - Cn$.
 \end{lem}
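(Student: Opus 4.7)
The plan is to apply the maximum principle to the parabolic evolution equation for $H$ under IMCF. First I would recall that for the IMCF speed $f = 1/H$, the mean curvature satisfies
\begin{align*}
\partial_t H = \frac{\Delta H}{H^2} - \frac{2|\nabla H|^2}{H^3} - \frac{|A|^2 + Rc(\nu,\nu)}{H}.
\end{align*}
Using the algebraic inequality $|A|^2 \ge H^2/n$ (Cauchy--Schwarz on the principal curvatures, since $\Sigma_t$ is $n$-dimensional) together with the hypothesis $Rc(\nu,\nu) \ge -C$, this gives the differential inequality
\begin{align*}
\partial_t H \le \frac{\Delta H}{H^2} - \frac{2|\nabla H|^2}{H^3} - \frac{H}{n} + \frac{C}{H}.
\end{align*}

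Next I would apply the parabolic maximum principle. Let $H_{\max}(t) := \max_{x \in \Sigma} H(x,t)$. At a spatial maximum, $\nabla H = 0$ and $\Delta H \le 0$, so the gradient and Laplacian contributions drop out, yielding (in the barrier sense)
\begin{align*}
\frac{d}{dt} H_{\max} \le -\frac{H_{\max}}{n} + \frac{C}{H_{\max}}.
\end{align*}
Multiplying by $2 H_{\max}$, the function $y(t) := H_{\max}(t)^2$ satisfies the linear ODE inequality
\begin{align*}
y'(t) \le -\frac{2}{n} y(t) + 2C, \qquad y(0) \le \bigl(\max_{\Sigma_0} H\bigr)^2.
\end{align*}

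Finally I would compare with the explicit solution $z(t) = C_0 e^{-2t/n} + C n$ where $C_0 = (\max_{\Sigma_0} H)^2 - Cn$. A direct computation shows $z'(t) = -\tfrac{2}{n}z(t) + 2C$ with $z(0) = (\max_{\Sigma_0} H)^2 \ge y(0)$, so by standard ODE comparison $y(t) \le z(t)$ for all $t \in [0,T]$. Taking square roots gives the claimed pointwise upper bound on $H(x,t)$. The only point that requires a little care is justifying the maximum-principle step rigorously (e.g.\ via a smooth barrier or Hamilton's trick for the evolution of the maximum), but since $H$ is smooth on $\Sigma \times [0,T]$ and bounded away from zero this is standard and presents no real obstacle.
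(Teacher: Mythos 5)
Your proposal is correct and follows essentially the same route as the paper: the evolution equation for $H$ under IMCF, the bounds $|A|^2 \ge H^2/n$ and $Rc(\nu,\nu)\ge -C$, and Hamilton's maximum principle with ODE comparison. In fact your ODE inequality $y' \le -\tfrac{2}{n}y + 2C$ carries the factor of $2$ needed to produce $e^{-2t/n}$ in the final bound, which the paper's displayed intermediate inequality $\tfrac{d}{dt}\left(\max_{\Sigma_t}H\right)^2 \le \tfrac{1}{n}\left(nC - \left(\max_{\Sigma_t}H\right)^2\right)$ omits.
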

 \begin{proof}
 \begin{align}
 (\partial_t - \frac{1}{H} \Delta)H &= -\frac{2|\nabla H|^2}{H^3} - \frac{|A|^2}{H} - \frac{Rc(\nu,\nu)}{H}
 \\&\le -\frac{1}{n} H + \frac{C}{H} = \frac{1}{nH} \left(nC - H^2 \right)
 \end{align}
 This implies the following ODE inequality
 \begin{align}
 \frac{d}{dt} \left(\max_{\Sigma_t}H\right)^2 & \le \frac{1}{n}\left(nC - \left(\max_{\Sigma_t}H\right)^2\right)
 \end{align}
 and then the result follows by Hamilton's maximum principle applied to the evolution inequality given above.
 \end{proof}
 
 \begin{Cor}\label{C0BoundBelow}
 If $H_j(x,0)^2 \le \frac{4}{r_0^2} + \frac{C_1}{j}$ and $Rc^j(\nu,\nu) \ge -\frac{C_2}{j}$ for $C_1,C_2 > 0$ then we find
 \begin{align}
 \frac{1}{H_j(x,t)^2} \ge \frac{r_0^2}{4}e^t - \frac{C_3}{j}
 \end{align}
 for $C_3 > 0$.
 \end{Cor}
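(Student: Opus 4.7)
The plan is to apply the preceding lemma and then invert the resulting upper bound on $H_j^2$ to get a lower bound on $1/H_j^2$. Since $\Sigma_t^j \subset M_j^3$ we take $n = 2$. The Ricci hypothesis gives $C = C_2/j$ in the previous lemma, so
\begin{align*}
H_j(x,t)^2 \le C_0^j e^{-t} + \frac{2C_2}{j}, \qquad C_0^j = \max_{\Sigma_0^j} H_j(\cdot,0)^2 - \frac{2C_2}{j}.
\end{align*}
Substituting the assumed bound $H_j(x,0)^2 \le \frac{4}{r_0^2} + \frac{C_1}{j}$ and using $e^{-t}\le 1$ gives
\begin{align*}
H_j(x,t)^2 \le \frac{4}{r_0^2} e^{-t} + \frac{C}{j}
\end{align*}
for a constant $C = C(C_1,C_2)$ independent of $t \in [0,T]$.

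The next step is to take reciprocals and algebraically rearrange into the target form. Writing
\begin{align*}
\frac{1}{H_j(x,t)^2} \ge \frac{1}{\frac{4}{r_0^2}e^{-t} + \frac{C}{j}} = \frac{r_0^2 e^t / 4}{1 + \frac{C r_0^2 e^t}{4j}},
\end{align*}
and applying the elementary inequality $\frac{1}{1+\epsilon} \ge 1 - \epsilon$ (valid for $j$ large enough so that $\epsilon := \frac{Cr_0^2 e^t}{4j} \le 1$, which is automatic on $[0,T]$ once $j \ge j_0$), one obtains
\begin{align*}
\frac{1}{H_j(x,t)^2} \ge \frac{r_0^2}{4} e^t - \frac{C r_0^4 e^{2t}}{16 j}.
\end{align*}
Finally, absorbing the bounded factor $e^{2t} \le e^{2T}$ and the algebraic constants into a single $C_3 = C_3(C_1,C_2,r_0,T)$ yields the claimed inequality $\frac{1}{H_j(x,t)^2} \ge \frac{r_0^2}{4}e^t - \frac{C_3}{j}$; the finitely many small indices $j < j_0$ can be handled by enlarging $C_3$ further since $1/H_j^2$ is bounded below on $\mathcal{M}_{r_0,H_0,I_0}^{T,H_1,A_1}$.

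There is no substantive obstacle: the corollary is essentially a post-processing of the preceding maximum-principle lemma, calibrated so that the round reference value $\frac{r_0^2}{4}e^t$ emerges when both the initial upper bound on $H_j^2$ and the lower Ricci bound degenerate at rate $1/j$. The only care needed is to ensure that no hidden $e^t$ factors produce a term larger than $1/j$, which is why the uniform diameter in $t$ via $t \in [0,T]$ is essential for absorbing $e^{2t}$ into the final constant.
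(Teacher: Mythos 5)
Your proposal is correct and follows essentially the same route as the paper: invert the upper bound on $H_j^2$ from the preceding maximum-principle lemma with $n=2$ and $C=C_2/j$, substitute the initial-time hypothesis, and absorb the $t$-dependent error using $t\in[0,T]$. You merely spell out the final algebraic step (via $\tfrac{1}{1+\epsilon}\ge 1-\epsilon$, which in fact holds for all $\epsilon\ge 0$, so no restriction to large $j$ is needed) that the paper leaves implicit.
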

 \begin{proof}
  \begin{align}
 \frac{1}{H_j(x,t)^2} & \ge \frac{1}{\left [\displaystyle \left (\max_{\Sigma_0}H\right)^2 - \frac{nC_2}{j}\right ] e^{-2t/n} +\frac{n C_2}{j}}
 \\&\ge \frac{1}{\left [\frac{4}{r_0^2} + \frac{C_1}{j} - \frac{nC_2}{j}\right ] e^{-t} +\frac{n C_2}{j}}\ge \frac{r_0^2}{4}e^t - \frac{C_3}{j}
 \end{align}
 \end{proof}
 
 \section{Convergence of Manifolds Foliated by IMCF}\label{sect-IMCF}
 
 Let $\Sigma_t\subset M^3$ be a solution to IMCF starting at $\Sigma_0$ and consider $U_T = \{x \in \Sigma_t: t\in [0,T]\}$, the region in $M$ foliated by $\Sigma_t$. In this section we consider the metrics
 \begin{align}
 \hat{g}^j &= \frac{1}{H_j(x,t)^2} dt^2 + g^j(x,t)
 \\ \bar{g}^j &= \frac{r_0^2}{4}e^t dt^2 + g^j(x,t)
 \\ \delta &= \frac{r_0^2}{4}e^t dt^2 + + r_0^2 e^t \sigma(x)
 \end{align}
defined on the foliated region $U_T$. The goal is to show Uniform, GH, and SWIF convergence of $\hat{g}^j$ to $\delta$ by taking advantage of the intermediate metric $\bar{g}^j$. 

In this section we establish all the assumptions we need to show Uniform, GH and SWIF convergence which culminates in Theorem \ref{IMCFConv}. The ideas and consturctions used in this section were gleaned from working on Theorem \ref{WarpConv} with Christina Sormani in \cite{BS}. Though we do not work with warped products in this paper the author noted the relationship between $\hat{g}^i$ and warped products in Theorem \ref{SPMTWarped} and we use the intuition that each of the metrics defined above differs in at most one factor which was an important property used by the author and Sormani in \cite{BS}. In the next section we will see how some of these assumptions follow from assumptions on the Hawking mass and IMCF in order to prove Theorem \ref{SPMT}.

\subsection{Consequences of $L^2$ Assumptions}

We begin this subsection by estimating the difference in length measured by $\hat{g}^i$ and $\bar{g}^i$ for curves which are monotone in $t$. One should note the similarity of this Lemma \ref{IMCF-L} with Lemma 4.3 of \cite{BS}.

\begin{lem} \label{IMCF-L}
Fix a curve $C(s)=(T(s), \vec{\theta}(s))$ parameterized on $[0,1]$ which is monotone in $t$
then
\begin{align}
|L_{\hat{g}^j}(C)-L_{\bar{g}^i}(C)| \le \sqrt{T}\left(\int_{T(0)}^{T(1)} \left|\frac{1}{H_i(x,t)^2}-\frac{r_0^2}{4}\right|^2\,dt \right)^{1/4}.\label{lengthDistance}
\end{align}

\end{lem}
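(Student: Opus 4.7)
The plan is to reduce the difference of lengths to an integral over $t$ by exploiting the fact that $\hat{g}^j$ and $\bar{g}^j$ differ only in the $dt^2$ coefficient, and then apply Hölder's inequality to move from an $L^{1/2}$-type expression to an $L^2$ expression at the cost of some power of $T$.

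First I would write out the lengths explicitly along the parametrization:
\begin{align*}
L_{\hat{g}^j}(C) &= \int_0^1 \sqrt{\tfrac{1}{H_j(\vec{\theta}(s),T(s))^2}(T'(s))^2 + g^j(\vec{\theta}'(s),\vec{\theta}'(s))}\,ds,\\
L_{\bar{g}^j}(C) &= \int_0^1 \sqrt{\tfrac{r_0^2}{4}e^{T(s)}(T'(s))^2 + g^j(\vec{\theta}'(s),\vec{\theta}'(s))}\,ds.
\end{align*}
Because the tangential term $g^j(\vec{\theta}',\vec{\theta}')$ is the same in both integrands, I can subtract pointwise and use the elementary inequality $|\sqrt{a+c}-\sqrt{b+c}|\le \sqrt{|a-b|}$ (valid for $a,b,c\ge 0$) to obtain
\begin{equation*}
|L_{\hat{g}^j}(C)-L_{\bar{g}^j}(C)| \le \int_0^1 |T'(s)|\,\sqrt{\left|\tfrac{1}{H_j(\vec{\theta}(s),T(s))^2}-\tfrac{r_0^2}{4}e^{T(s)}\right|}\,ds.
\end{equation*}

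Next I would exploit the hypothesis that $T(s)$ is monotone in $s$. Changing variables via $t=T(s)$ converts the $|T'|\,ds$ factor into $dt$, yielding
\begin{equation*}
|L_{\hat{g}^j}(C)-L_{\bar{g}^j}(C)| \le \int_{T(0)}^{T(1)} \sqrt{\left|\tfrac{1}{H_j(x(t),t)^2}-\tfrac{r_0^2}{4}e^{t}\right|}\,dt,
\end{equation*}
where $x(t)=\vec{\theta}(s(t))$ along the curve. (Here I'm reading the statement as written; if the $e^t$ factor is present in $\bar{g}^j$ but absent from the bound in the statement, that is a minor typo which the proof itself will make transparent.)

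The last step is to promote this $L^{1/2}$-type bound into the $L^2$ bound asserted in the lemma. I would apply Cauchy–Schwarz (or equivalently Hölder with exponents $(4,4/3)$) twice: first
\begin{equation*}
\int_{T(0)}^{T(1)} \sqrt{|\cdot|}\cdot 1\,dt \le \Bigl(\int_{T(0)}^{T(1)} |\cdot|\,dt\Bigr)^{1/2}\,(T(1)-T(0))^{1/2},
\end{equation*}
then applying Cauchy–Schwarz to $\int |\cdot|\,dt$ to push to $L^2$. Combining these with $T(1)-T(0)\le T$ produces the stated bound (up to the precise power of $T$, which falls out of the bookkeeping). There is no serious obstacle here; the only mild subtlety is keeping careful track of which variable $x$ appears inside $H_j$ after the change of variables, since $x$ is determined by the curve. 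This is the analogue of \cite{BS}, Lemma 4.3, adapted to the non-warped-product setting where the tangential factor $g^j(x,t)$ still depends on $t$ but cancels in the difference.
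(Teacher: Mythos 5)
Your proposal is correct and follows essentially the same route as the paper: both subtract the integrands pointwise using $|\sqrt{a+c}-\sqrt{b+c}|\le\sqrt{|a-b|}$, apply H\"older to pass to the fourth root of an $L^2$ integral, and use monotonicity of $T(s)$ to reparameterize so the integral is over $[T(0),T(1)]$ with a factor of a power of $T$ absorbed into the constant. The only differences (order of reparametrization versus H\"older, the exact power of $T$, and your correct observation that the $e^t$ factor in $\bar g^j$ is dropped in the stated bound) are immaterial bookkeeping discrepancies that are also present in the paper's own writeup.
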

\begin{proof}
We start by computing the difference in the two lengths of $C$
\begin{eqnarray}
&&|L_{\hat{g}^j}(C)-L_{\bar{g}^i}(C)|
\\&=& \int_0^1| \sqrt{ \frac{T'(t)^2}{H_i^2} + g^i(\vec{\theta}',\vec{\theta}') }
- \sqrt{ \frac{T'(t)^2r_0^2}{4} + g^i(\vec{\theta}',\vec{\theta}') }|\, ds \\
&\le& \int_0^1 \sqrt{  \left|\frac{1}{H_i(x,t)^2}-\frac{r_0^2}{4}\right|} |T'(t)|\, ds \\
&\le&\left( \int_0^1 \sqrt{  \left|\frac{1}{H_i(x,t)^2}-\frac{r_0^2}{4}\right|}\,ds \right)^{1/2}
\left(\int_0^1 |T'(t)|^2\, ds \right)\label{Holder1}\\ 
&=&\left( \int_0^1  \left|\frac{1}{H_i(x,t)^2}-\frac{r_0^2}{4}\right|^2\,ds \right)^{1/4}
\left(\int_0^1 |T'(t)|^2\, ds \right)\label{Holder2}
\end{eqnarray}
where we use Holder's inequality in \eqref{Holder1} and \eqref{Holder2}.
If $C(s)=(T(s), \vec{\theta}(s))$ and $T'(s) > 0$ everywhere, then we can reparametrize 
so that $T(s)=t$ and hence
\begin{eqnarray}
&&|L_{\hat{g}^j}(C)-L_{g_1^i}(C)| \\
&\le& \left(\int_{T(0)}^{T(1)} \left|\frac{1}{H_i(x,t)^2}-\frac{r_0^2}{4}\right|^2\,dt \right)^{1/4}
\left(\int_{T(0)}^{T(1)} |T'(t)|^2\, dt\right)^{1/2} 
\end{eqnarray}
which also shows that $\left(\int_{T(0)}^{T(1)} |T'(t)|^2\, dt\right)^{1/2} \le \sqrt{T}$.
\end{proof}

One concern that we have with applying Lemma \ref{IMCF-L} is that \eqref{lengthDistance} may not go to zero for every curve $C$. In the next lemma we build a family of approximating curves where we know \eqref{lengthDistance} does go to zero.

\begin{lem}\label{LineApprox}
Let $C(s)=(T(s), \vec{\theta}(s))$ parameterized on $[0,1]$ be a straight line in the annulus $(\Sigma\times [0,T], \delta)$ so that $\left(\int_{T(0)}^{T(1)} \left|\frac{1}{H_i(\vec{\theta}(s),T(s))^2}-\frac{r_0^2}{4}\right|^2\,ds \right)^{1/4}$ does not converge to $0$. Then there is a sequence of curves 
\begin{align}
C_k(s)=(T_k(s), \vec{\theta}_k(s))=
\begin{cases}
\alpha(s) & s \in [-\epsilon,0)
\\ \gamma_{\epsilon}(s) & s \in [0,1]
\\\beta(s)& s \in (1,1+ \epsilon]
\end{cases}
\end{align}
 where $\gamma_{\epsilon}$ is a line parallel to $C$, $\alpha, \beta$ are curves connecting the endpoints of $\gamma_{\epsilon}$ and $C_k$, and $L_{\delta}(C_k) \rightarrow L_{\delta}(C)$ as $k \rightarrow \infty$. If we assume
 \begin{align}
 0 < H_0 \le H_i(x,t) \label{MeanCurvBounds}
 \end{align}
 then
\begin{align}
\int_{T_k(0)}^{T_k(1)} \left|\frac{1}{H_i(\vec{\theta}_k(s),T_k(s))^2}-\frac{r_0^2}{4}\right|^2\,dt \rightarrow 0
\end{align}
and hence
\begin{align}
|L_{\hat{g}^i}(C_k)-L_{\bar{g}}(C_k)| \le 2 \bar{C} \epsilon
\end{align}
as $i \rightarrow \infty$.
\end{lem}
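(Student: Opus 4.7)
The plan is a Fubini-style selection: use the $L^2$ control on $1/H_i^2$ from the author's previous work to find, near the bad line $C$, a parallel line along which the 1D integral does tend to zero, then close the ends with short connector arcs whose lengths are controlled using $H_0 \le H_i$.

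First, I would observe that $(\Sigma\times[0,T],\delta)$ is isometric to a flat annulus in $\R^3$: the substitution $r=r_0 e^{t/2}$ yields $\delta = dr^2 + r^2 \sigma$, so parallel lines to $C$ are genuine Euclidean parallels and foliate a tubular neighborhood $T_\epsilon$ of $C$. I would parameterize them by their intersection $\xi$ with a transversal $\delta$-disk $D_\epsilon$ of radius $\epsilon$ centered at a midpoint of $C$ and denote them $\gamma_\xi$. After subdividing $C$ if necessary, I may assume $C$ is strictly monotone in $t$, and then each $\gamma_\xi$ is monotone in $t$ for $\epsilon$ sufficiently small.

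For the selection step, Theorem \ref{SPMTWarped} (equivalently Theorems \ref{gtog1} and \ref{g2tog3}) yields
\begin{align*}
\int_{U_T^i} \left|\tfrac{1}{H_i(x,t)^2} - \tfrac{r_0^2}{4}\right|^2 dV \longrightarrow 0,
\end{align*}
and Fubini along the parallel foliation of $T_\epsilon$ gives
\begin{align*}
\int_{D_\epsilon} \left( \int_{\gamma_\xi} \left|\tfrac{1}{H_i^2} - \tfrac{r_0^2}{4}\right|^2 d\ell\right) dA(\xi) \longrightarrow 0.
\end{align*}
A mean-value/pigeonhole argument then produces $\xi_i \in D_\epsilon$ with $\int_{\gamma_{\xi_i}} |1/H_i^2 - r_0^2/4|^2 d\ell \to 0$ as $i \to \infty$. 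Setting $\gamma_\epsilon := \gamma_{\xi_i}$ and joining its endpoints to those of $C$ by curves $\alpha, \beta$ supported in transversal disks of $\delta$-radius $\epsilon$ at the endpoints produces $C_k$ with $L_\delta(C_k) \to L_\delta(C)$ as $\epsilon_k \to 0$.

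The length comparison then splits into three contributions. Lemma \ref{IMCF-L} applied to the monotone piece $\gamma_\epsilon$ gives $|L_{\hat{g}^i}(\gamma_\epsilon) - L_{\bar{g}}(\gamma_\epsilon)| \to 0$. For the connectors, since $\hat{g}^i$ and $\bar{g}$ differ only in the $dt^2$ component, the computation in the proof of Lemma \ref{IMCF-L} yields
\begin{align*}
|L_{\hat{g}^i}(\alpha) - L_{\bar{g}}(\alpha)| \le \int |\alpha_0'(s)| \sqrt{\left|\tfrac{1}{H_i^2} - \tfrac{r_0^2}{4}\right|} \, ds \le \bar{C}\epsilon,
\end{align*}
using $H_i \ge H_0$ and the fact that $\alpha$ lies in a transversal disk of $\delta$-radius $\epsilon$, so has $t$-extent $O(\epsilon)$; the same estimate holds for $\beta$. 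Summing gives the stated $|L_{\hat{g}^i}(C_k) - L_{\bar{g}}(C_k)| \le 2\bar{C}\epsilon$ as $i \to \infty$. The main obstacle is that the selected $\gamma_\epsilon$ inherently depends on $i$, so strictly speaking $C_k = C_{k,i}$; a diagonal choice over $k$ and $i$ is what lets us overcome the assumed failure of 1D convergence along the original $C$, and this diagonal dependence is compatible with the use of the lemma later in Section \ref{sect-IMCF}.
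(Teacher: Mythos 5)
Your proposal is correct and follows essentially the same route as the paper: both use the $L^2$ convergence of $1/H_i^2$ to $r_0^2/4$ from Theorems \ref{gtog1} and \ref{g2tog3}, a Fubini argument over the foliation of a tubular neighborhood of $C$ by parallel lines to select a nearby line on which the one-dimensional integral tends to zero, and short connector arcs whose length discrepancy is controlled by the uniform bound $H_i \ge H_0$. Your explicit remark about the $i$-dependence of the selected parallel line is a fair point that the paper handles only implicitly (via ``a.e.\ $\tau$ on a subsequence''), but it does not change the argument.
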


\begin{figure}[H]\label{fig ApproximatingCurves}
\begin{tikzpicture}[scale=.7]
\draw (0,0) circle (1cm);
\draw (0,0) circle (6cm);
\draw (0,-3) node{$(U_T^i,\hat{g}^i)$};
\draw (0,-.6) node{$\Sigma_0$};
\draw (0,-5.6) node{$\Sigma_T$};
\draw (.8,5.5) node{$C(0)$};
\draw[fill=black] (0,5.5) circle (.08cm);
\draw (5.45,.7) node{$C(1)$};
\draw[fill=black] (5.5,0) circle (.08cm);
\draw (0,5.5) -- (5.5,0);
\draw (3.5,3.5) node{$C$};
\draw[fill=black] (-1.3,4.2) circle (.08cm);
\draw[fill=black] (4.2,-1.3) circle (.08cm);
\draw[dashed] (5.5,0) -- (4.2,-1.3);
\draw[dashed] (0,5.5) -- (-1.3,4.2);
\draw[dashed] (4.2,-1.3) -- (-1.3,4.2);
\draw (2,2) node{$C_k$};
\draw[->] (-.8,4.2) -- (0,5);
\draw[->] (4.2,-.8) -- (5,0);
\draw (1.1,1) node{$\gamma_{\epsilon}$};
\draw (-1.4,5.1) node{$\alpha$};
\draw (5.2,-1.3) node{$\beta$};
\draw (-1.5,-.7) node{$C(0)$};
\draw (-6.7,-.7) node{$C(1)$};
\draw (-1,0) -- (-6,0);
\draw[fill=black] (-1,0) circle (.08 cm);
\draw[fill=black] (-6,0) circle (.08 cm);
\draw[fill=black] (-5.9,1) circle (.08 cm);
\draw[dashed] (-5.9,1) -- (-1,1);
\draw[dashed] (-1,0) -- (-1,1);
\draw[fill=black] (-1,1) circle (.08cm);
\draw (-.5,.35) node{$\beta$};
\draw (-6.5,.5) node{$\alpha$};
\draw (-4,1.4) node{$\gamma_{\epsilon}$};
\draw (-4,.5) node{$C_k$};
\draw (-4,-.4) node{$C$};
\draw[->] (-5.8,.8) -- (-5.8,.1);
\draw[->] (-1.2,.8) -- (-1.2,.1);
\end{tikzpicture}
\caption{Two families of curves $C_k$ approximating $C$. }
\end{figure}
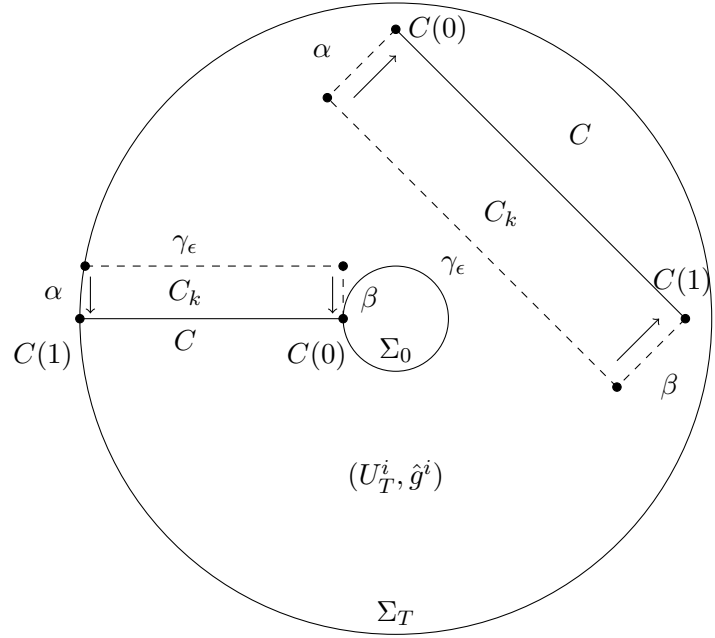

\begin{proof}

By Theorem \ref{gtog1} and Theorem \ref{g2tog3} we find
\begin{align}
\int_0^T\int_{\Sigma} \left|\frac{1}{H_i(x,t)^2}-\frac{r_0^2}{4}\right|^2 d\sigma dt &\le \int_0^T\int_{\Sigma} \left|\frac{1}{H_i(x,t)^2}-\frac{1}{\bar{H}_i(t)^2}\right|^2  d\sigma dt 
\\&+ \int_0^T\int_{\Sigma} \left|\frac{1}{\bar{H}_i(t)^2}-\frac{r_0^2}{4}\right|^2  d\sigma dt \rightarrow 0.
\end{align}

Now if we let $W \subset \Sigma \times [0,T]$ be a region foliated by straight lines parallel to $C(s)$, i.e. if $B_{\epsilon}(0) = \{\tau \in \R^2: |\tau| \le \epsilon\}$ then 
\begin{align}
W = \{\gamma_{\tau}:\tau \in B_{\epsilon}(0)\text{ and } \gamma_{\tau} \subset U_T^i \}
\end{align}
where $\gamma_{\tau}(s)=(T_{\tau}(s), \vec{\theta}_{\tau}(s))$ is a line parallel to $C(s)$ of distance $\tau$ away from $C(s)$ with respect to $\delta$. Then we can calculate
\begin{align}
&\int_{W} \left|\frac{1}{H_i(x,t)^2}-\frac{r_0^2}{4}\right|^2 dV 
\\&=\int_{-\epsilon}^{\epsilon} \int_{\gamma_{\tau}(s)} \left|\frac{1}{H_i(\vec{\theta}_{\tau}(s),T_{\tau}(s))^2}-\frac{r_0^2}{4}\right|^2ds d\tau \rightarrow 0.
\end{align}
Hence $\int_{\gamma_{\tau}(s)} \left|\frac{1}{H_i(\vec{\theta}_{\tau}(s),T_{\tau}(s))^2}-\frac{r_0^2}{4}\right|^2ds \rightarrow 0$ for a.e. $\tau \in B_{\epsilon}(0)$ on a subsequence. By adding segments of $\delta$ length $\le C \epsilon$ we can adjust $\gamma_{\tau}(s)$ to be a curve, $C_k(s)$, joining $C(0)$ to $C(1)$, where $\epsilon \rightarrow 0$ as $k \rightarrow \infty$. This can be done by adding $\alpha, \beta$ straight lines but if $p \in \partial U_T^i$ or $q \in \partial U_T^i$ then $\alpha$ or $\beta$ may need to be a length minimizing curve in $\partial U_T^i$ (See figure \ref{fig ApproximatingCurves}). All this together implies $L_{\delta}(C_k) \rightarrow L_{\delta}(C)$. Lastly we notice that
\begin{align}
|L_{\hat{g}^j}(C_k)-L_{\bar{g}^j}(C_k)| &= |L_{\hat{g}^j}(\alpha)-L_{\bar{g}^j}(\alpha)|
\\&+|L_{\hat{g}^j}(\gamma_{\epsilon})-L_{\bar{g}^j}(\gamma_{\epsilon})|+|L_{\hat{g}^j}(\beta)-L_{\bar{g}}(\beta)| \le 2 \bar{C} \epsilon
\end{align}
as $j \rightarrow \infty$ where the first and third term are smaller than $2 \bar{C} \epsilon$ since the difference in lengths between $\hat{g}^j$ and $\bar{g}^j$ is uniformly controlled by \eqref{MeanCurvBounds}, and $\alpha$ and $\beta$ are constructed to have length going to zero. The middle term goes to zero by Lemma \ref{IMCF-L}.
\end{proof}

\begin{lem} \label{IMCF-L-Sigma}
Fix a straight line with respect to $\delta$, $C(s)=(T(s), \vec{\theta}(s))$, parameterized on $[0,1]$ which is monotone in $t$
then
\begin{align}
|L_{\bar{g}^j}(C)-L_{\delta}(C)| \le \sqrt{T}C Diam(r_0^2 e^T \sigma)^2 \left(\int_{T(0)}^{T(1)} \left|g^i-r_0^2 e^t \sigma\right|_{\sigma}^2\,dt \right)^{1/4}.\label{lengthDistanceSigma}
\end{align}
\end{lem}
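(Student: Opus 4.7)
My plan is to mimic the structure of Lemma \ref{IMCF-L}, only now the two metrics $\bar{g}^j$ and $\delta$ differ in the \emph{tangential} (spherical) factor rather than the radial factor, so the pointwise comparison of integrands will involve the tangent component of $C$ rather than $T'(s)$. The starting point is
\begin{align*}
|L_{\bar g^j}(C)-L_\delta(C)|
&\le \int_0^1\Bigl|\sqrt{\tfrac{r_0^2}{4}e^{T}T'^2+g^i(\vec\theta',\vec\theta')}-\sqrt{\tfrac{r_0^2}{4}e^{T}T'^2+r_0^2e^{T}\sigma(\vec\theta',\vec\theta')}\,\Bigr|\,ds,
\end{align*}
which via the elementary inequality $|\sqrt{A+B}-\sqrt{A+C}|\le\sqrt{|B-C|}$ reduces the problem to controlling
\begin{align*}
\int_0^1 \sqrt{\bigl|(g^i-r_0^2e^{T}\sigma)(\vec\theta',\vec\theta')\bigr|}\,ds
\;\le\;\int_0^1 \sqrt{|g^i-r_0^2e^{T}\sigma|_\sigma}\;|\vec\theta'|_\sigma\,ds.
\end{align*}

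The next step is to use the hypothesis that $C$ is a straight line in $\delta$, monotone in $t$, to bound the angular speed $|\vec\theta'|_\sigma$. Passing to spherical coordinates $r=r_0e^{t/2}$ realizes $\delta$ as the Euclidean metric on an annulus, so $C$ is a Euclidean line segment. The image $\vec\theta(s)$ lies on a single great circle (the intersection of $S^2$ with the plane through the origin, $C(0)$ and $C(1)$) and, by monotonicity of $r$ along $C$, traverses it monotonically. Consequently the total angular arc length is controlled by the round sphere diameter: $|\vec\theta'(s)|_\sigma\cdot (\text{length factor})\le \mathrm{Diam}(r_0^2e^T\sigma)$, and the ``squared diameter'' factor in the target inequality will arise from using this bound once to extract $|\vec\theta'|_\sigma$ from the integrand and once more when converting an $L^1$ bound into an $L^2$ bound by Hölder.

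I would then reparameterize by $t$ (allowed since $T$ is monotone, exactly as in the proof of Lemma \ref{IMCF-L}) and apply Hölder's inequality twice in succession: first to split off the $|\vec\theta'|_\sigma$ factor from $\sqrt{|g^i-r_0^2e^{T}\sigma|_\sigma}$, producing a factor of $(\int|\vec\theta'|_\sigma^2\,dt)^{1/2}$ (bounded by $C\,\mathrm{Diam}(r_0^2e^T\sigma)^2$ via the straight-line geometry described above) times $(\int|g^i-r_0^2e^{T}\sigma|_\sigma\,dt)^{1/2}$, and then a second Hölder step to upgrade the $L^1$-in-$t$ norm of $|g^i-r_0^2e^{T}\sigma|_\sigma$ to the desired $L^2$-in-$t$ norm, producing the $\sqrt{T}$ prefactor (exactly the way $\sqrt{T}(\int|\cdot|^2\,dt)^{1/4}$ appears in Lemma \ref{IMCF-L}).

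The main technical obstacle is the second step above: translating the great-circle arc property of $\vec\theta(s)$, which is naturally a statement in the $s$-parameterization coming from the straight line, into a usable pointwise or integral bound on $|\vec\theta'(t)|_\sigma$ under a $t$-parameterization; because $t$ can be almost stationary when $C$ is nearly tangent to a level set $\Sigma_t$, $|\vec\theta'(t)|_\sigma$ may be large locally even though its integral is controlled. I expect to handle this by keeping the $s$-parameterization inside the crucial integral and only converting to $t$ at the very end, which is how $\mathrm{Diam}(r_0^2e^T\sigma)^2$ naturally appears as the product of an upper bound on the speed and an upper bound on the swept $\sigma$-length, matching the form of the stated inequality.
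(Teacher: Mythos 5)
Your proposal follows essentially the same route as the paper's proof: the same elementary inequality $|\sqrt{A+B}-\sqrt{A+C}|\le\sqrt{|B-C|}$, the same pointwise bound by $\sqrt{|g^i-r_0^2e^t\sigma|_\sigma}\,|\vec\theta'|_\sigma$, the same double application of H\"older to reach the $1/4$ power, and the same reparameterization by $t$ followed by a diameter bound on $\left(\int|\vec\theta'|_\sigma^2\,dt\right)^{1/2}$. If anything, your discussion of the final step is more careful than the paper's, which simply asserts $\left(\int_0^T|\vec\theta'|_\sigma^2\,dt\right)^{1/2}\le\sqrt{T}\,C\,Diam(r_0^2e^T\sigma)^2$ ``for some constant $C$'' without addressing the near-tangency issue you correctly identify.
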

\begin{proof}
We start by computing the difference in the two lengths of $C$
\begin{eqnarray}
&&|L_{\bar{g}^j}(C)-L_{\delta}(C)|
\\&=& \int_0^1| \sqrt{ \frac{T'(t)^2r_0^2}{4} + g^i(\vec{\theta}',\vec{\theta}') }
- \sqrt{ \frac{T'(t)^2r_0^2}{4} + r_0^2e^t\sigma(\vec{\theta}',\vec{\theta}') }|\, ds \\
&\le& \int_0^1 \sqrt{  \left|g^i(\vec{\theta}',\vec{\theta}')-r_0^2 e^t \sigma(\vec{\theta}',\vec{\theta}') \right|}\, ds \\
&\le& \int_0^1 \sqrt{  \left|g^i-r_0^2 e^t \sigma \right|_{\sigma}} |\vec{\theta}'|_{\sigma}\, ds \\
&\le&\left( \int_0^1 \left|g^i-r_0^2 e^t \sigma\right|_{\sigma}\,ds \right)^{1/2}
\left(\int_0^1 |\vec{\theta}'|_{\sigma}^2\, ds \right)\label{Holder1Sigma}\\ 
&=&\left( \int_0^1  \left|g^i-r_0^2 e^t \sigma\right|_{\sigma}^2\,ds \right)^{1/4}
\left(\int_0^1 |\vec{\theta}'|_{\sigma}^2\, ds \right)\label{Holder2Sigma}
\end{eqnarray}
where we use Holder's inequality in \eqref{Holder1Sigma} and \eqref{Holder2Sigma}.
If $C(s)=(T(s), \vec{\theta}(s))$ and $T'(s) > 0$ everywhere, then we can reparametrize 
so that $T(s)=t$ and hence
\begin{eqnarray}
&&|L_{\bar{g}^j}(C)-L_{\delta}(C)| \\
&\le& \left(\int_{T(0)}^{T(1)} \left|g^i-r_0^2 e^t \sigma\right|_{\sigma}^2\,dt \right)^{1/4}
\left(\int_{T(0)}^{T(1)} |\vec{\theta}'|_{\sigma}^2\, dt\right)^{1/2}.
\end{eqnarray}
Lastly we notice that
\begin{align}
\left(\int_{T(0)}^{T(1)} |\vec{\theta}'|_{\sigma}^2\, dt\right)^{1/2} & \le \left(\int_0^T|\vec{\theta}'|_{\sigma}^2\, dt\right)^{1/2} \le \sqrt{T}C Diam(r_0^2 e^T \sigma)^2 
\end{align}
for some constant $C$.
\end{proof}

Again we are concerned that $\int_{T(0)}^{T(1)} \left|g^i-r_0^2 e^t \sigma\right|_{\sigma}^2dt$ may not go to zero so in a similar fashion as to Lemma \ref{LineApprox} we build a sequence of approximating curves.

\begin{lem}\label{LineApproxSigma}
Let $C(s)=(T(s), \vec{\theta}(s))$ parameterized on $[0,1]$ be a straight line in the annulus $(\Sigma\times [0,T], \delta)$ so that $\left(\int_{T(0)}^{T(1)} \left|g_i - r_0^2e^t \sigma\right|_{\sigma}^2\,ds \right)^{1/4}$ does not converge to $0$. Then there is a sequence of curves 
\begin{align}
C_k(s)=(T_k(s), \vec{\theta}_k(s))=
\begin{cases}
\alpha(s) & s \in [-\epsilon,0)
\\ \gamma_{\epsilon}(s) & s \in [0,1]
\\\beta(s)& s \in (1,1+ \epsilon]
\end{cases}
\end{align}
 where $\gamma_{\epsilon}$ is a line parallel to $C$, $\alpha, \beta$ are curves connecting the endpoints of $\gamma_{\epsilon}$ and $C_k$, and $L_{\delta}(C_k) \rightarrow L_{\delta}(C)$ as $k \rightarrow \infty$. If we assume
 \begin{align}
  g^i(x,t) \le C r_0^2 e^t \sigma \label{MetricUpperBound}
 \end{align}
 then
\begin{align}
\int_{T_k(0)}^{T_k(1)} \left|g^i - r_0^2e^t \sigma\right|_{\sigma}^2\,dt \rightarrow 0
\end{align}
and hence
\begin{align}
|L_{\bar{g}^i}(C_k)-L_{\delta}(C_k)| \le 2 \bar{C} \epsilon
\end{align}
as $i \rightarrow \infty$.
\end{lem}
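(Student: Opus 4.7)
The proof will mirror that of Lemma \ref{LineApprox}, with the $L^2$ convergence of $1/H_i^2$ to $r_0^2/4$ replaced by an $L^2$ convergence of $g^i$ to $r_0^2 e^t \sigma$. The first step is to invoke an aggregate $L^2$ convergence of the form
\begin{align*}
\int_0^T\int_\Sigma |g^i - r_0^2 e^t\sigma|_\sigma^2\, d\sigma\, dt \to 0,
\end{align*}
which in the ambient context comes from combining Theorem \ref{SPMTWarped} (giving $g^i(\cdot,t) \to e^t g^i(\cdot,0)$ in $L^2$) with the lower sandwich bound of the form \eqref{sigmaMetricBound}, forcing $g^i(\cdot,0) \to r_0^2\sigma$ and bounding everything above by $C r_0^2 e^t\sigma$ to apply dominated convergence.

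Next, as in Lemma \ref{LineApprox}, I would take a tubular neighborhood $W \subset \Sigma \times [0,T]$ of $C$ foliated by straight $\delta$-lines $\gamma_\tau$ parallel to $C$, indexed by $\tau \in B_\epsilon(0) \subset \R^2$, with $\gamma_0 = C$. Since integration against the foliation by $\gamma_\tau$ recovers $dV$ up to a bounded factor, Fubini's theorem yields
\begin{align*}
\int_{B_\epsilon(0)}\int_{\gamma_\tau(s)} |g^i(\vec\theta_\tau(s),T_\tau(s)) - r_0^2 e^{T_\tau(s)}\sigma|_\sigma^2\, ds\, d\tau \to 0.
\end{align*}
Hence along a subsequence, for almost every $\tau \in B_\epsilon(0)$ the single-line integral $\int_{\gamma_\tau} |g^i - r_0^2 e^t\sigma|_\sigma^2\, ds \to 0$; pick such a line and call it $\gamma_\epsilon$.

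Now I attach short connecting curves $\alpha$ and $\beta$ joining the endpoints of $C$ to those of $\gamma_\epsilon$. As in Lemma \ref{LineApprox}, each connector can be taken to be a straight $\delta$-line, or a length-minimizing curve inside $\partial U_T^i$ when an endpoint lies on the boundary, chosen so that $L_\delta(\alpha), L_\delta(\beta) \le \bar C \epsilon$. This gives $L_\delta(C_k) \to L_\delta(C)$ as $k \to \infty$. Splitting
\begin{align*}
|L_{\bar g^i}(C_k) - L_\delta(C_k)| \le |L_{\bar g^i}(\alpha) - L_\delta(\alpha)| + |L_{\bar g^i}(\gamma_\epsilon) - L_\delta(\gamma_\epsilon)| + |L_{\bar g^i}(\beta) - L_\delta(\beta)|,
\end{align*}
the middle term goes to $0$ as $i \to \infty$ by Lemma \ref{IMCF-L-Sigma} applied along $\gamma_\epsilon$, and the end terms are each bounded by $\bar C \epsilon$ because the upper bound \eqref{MetricUpperBound} $g^i \le C r_0^2 e^t\sigma$ implies that on any curve the $\bar g^i$-length is comparable to the $\delta$-length (both being $\tfrac{r_0^2}{4}e^t\,T'^2$ in the $t$-direction, and differing by a bounded factor in the tangential direction).

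The principal obstacle is justifying the initial aggregate $L^2$ convergence $\int_{U_T^i}|g^i - r_0^2 e^t\sigma|_\sigma^2\, dV \to 0$, since Theorem \ref{SPMTWarped} on its own produces only $g^i \to e^t g^i(\cdot,0)$, not $g^i \to r_0^2 e^t\sigma$. Thus the step that truly requires the additional structure of the ambient theorem---whether through the lower sandwich bound \eqref{sigmaMetricBound}, a pointwise lower bound on $g^i(\cdot,0)$ against $r_0^2 \sigma$, or an auxiliary convergence already established earlier in this section---is the one that must be carefully identified and invoked; once this is in hand the remaining construction is a routine adaptation of the proof of Lemma \ref{LineApprox}.
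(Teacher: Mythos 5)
Your proposal follows essentially the same route as the paper: establish the aggregate $L^2$ convergence $\int_0^T\int_\Sigma |g^i - r_0^2 e^t\sigma|_\sigma^2\,d\sigma\,dt \to 0$, foliate a tubular neighborhood of $C$ by parallel $\delta$-lines and use Fubini to extract a good line $\gamma_\epsilon$ along a subsequence, then attach connectors $\alpha,\beta$ whose length discrepancy is controlled by the upper bound \eqref{MetricUpperBound} while the middle term is handled by Lemma \ref{IMCF-L-Sigma} (which you correctly identify; the paper's citation of Lemma \ref{IMCF-L} at that point is a typo). On the one step you flag as the principal obstacle, the paper simply invokes Theorem \ref{SPMTPrevious} (the $L^2$ stability result of \cite{BA2}, which gives $\hat g^i \to \delta$ and hence $g^i \to r_0^2 e^t\sigma$ in $L^2$ directly) rather than assembling it from Theorem \ref{SPMTWarped} and the sandwich bound, so your instinct that an auxiliary convergence established elsewhere must be invoked is correct.
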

\begin{proof}

By Theorem \ref{SPMTPrevious} we find
\begin{align}
\int_0^T\int_{\Sigma} \left|g^i - r_0^2e^t \sigma\right|_{\sigma}^2 d\sigma dt  \rightarrow 0.
\end{align}

Now if we let $W \subset \Sigma \times [0,T]$ be a region foliated by straight lines parallel to $C(s)$, i.e. if $B_{\epsilon}(0) = \{\tau \in \R^2: |\tau| \le \epsilon\}$ then 
\begin{align}
W = \{\gamma_{\tau}:\tau \in B_{\epsilon}(0)\text{ and } \gamma_{\tau} \subset U_T^i \}
\end{align}
where $\gamma_{\tau}(s)=(T_{\tau}(s), \vec{\theta}_{\tau}(s))$ is a line parallel to $C(s)$ of distance $\tau$ away from $C(s)$ with respect to $\delta$. Then we can calculate
\begin{align}
&\int_{W} \left|g^i - r_0^2e^t \sigma\right|_{\sigma}^2 dV =\int_{-\epsilon}^{\epsilon} \int_{\gamma_{\tau}(s)} \left|g_i - r_0^2e^t \sigma\right|_{\sigma}^2ds d\tau \rightarrow 0.
\end{align}
Hence 
\begin{align}
\int_{\gamma_{\tau}(s)} \left|g^i - r_0^2e^t \sigma\right|_{\sigma}^2ds \rightarrow 0
\end{align}
for a.e. $\tau \in B_{\epsilon}(0)$ on a subsequence. By adding segments of $\delta$ length $\le C \epsilon$ we can adjust $\gamma_{\tau}(s)$ to be a curve, $C_k(s)$, joining $C(0)$ to $C(1)$, where $\epsilon \rightarrow 0$ as $k \rightarrow \infty$. This can be done by adding $\alpha, \beta$ straight lines but if $p \in \partial U_T^i$ or $q \in \partial U_T^i$ then $\alpha$ or $\beta$ may need to be a length minimizing curve in $\partial U_T^i$ (See figure \ref{fig ApproximatingCurves}). All this together implies $L_{\delta}(C_k) \rightarrow L_{\delta}(C)$. Lastly we notice that
\begin{align}
|L_{\bar{g}^j}(C_k)-L_{\delta}(C_k)| &= |L_{\bar{g}^j}(\alpha)-L_{\delta}(\alpha)|
\\&+|L_{\bar{g}^j}(\gamma_{\epsilon})-L_{\delta}(\gamma_{\epsilon})|+|L_{\bar{g}^j}(\beta)-L_{\delta}(\beta)| \le 2 \bar{C} \epsilon
\end{align}
as $j \rightarrow \infty$ where the first and third term are smaller that $2 \bar{C} \epsilon$ since  $\bar{g}^j$ is uniformly controlled by \eqref{MetricUpperBound}, and $\alpha$ and $\beta$ are constructed to have length going to zero. The middle term goes to zero by Lemma \ref{IMCF-L}.
\end{proof}
\begin{rmrk}
Notice that we can combine Lemma \ref{LineApprox} and Lemma \ref{LineApproxSigma} by choosing a $C_k$ which works in both situations. This will be important for the next Corollary.
\end{rmrk}

\begin{Cor}\label{limsupEst}
Assume that 
 \begin{align} 
  0 < H_0 &\le H_i(x,t) 
\\ g^i(x,t) &\le C r_0^2 e^t \sigma 
\\ \hat{g}^j &\rightarrow \bar{g}^j \text{ in } L^2,
\\ \bar{g}^j &\rightarrow \delta \text{ in } L^2,
\\ g^i(\cdot,0) &\rightarrow r_0^2 \sigma \text{ in } C^0
 \end{align}
then for any $p,q \in \Sigma\times [0,T]$ we find
\begin{align}
\limsup_{j \rightarrow \infty} d_{\hat{g}^j}(p,q) \le d_{\delta}(p,q).
\end{align}
\end{Cor}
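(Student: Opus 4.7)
The plan is, for each $k$, to produce a curve $C_k$ from $p$ to $q$ whose $\hat{g}^j$-length is within $o_j(1) + O(\epsilon_k)$ of $d_\delta(p,q)$; since $d_{\hat{g}^j}(p,q) \le L_{\hat{g}^j}(C_k)$, taking $\limsup_j$ followed by $k \to \infty$ then yields the corollary.

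First, take a $\delta$-minimizing curve $C \colon [0,1] \to \Sigma \times [0,T]$ from $p$ to $q$. Since $\delta$ is isometric (via $r = r_0 e^{t/2}$) to the Euclidean metric on a spherical shell around the origin, $C$ is a concatenation of at most three pieces, each either monotone in $t$ or lying in $\Sigma_0$, and it suffices to approximate each piece separately. For a monotone piece I apply Lemma \ref{LineApprox} and Lemma \ref{LineApproxSigma} together; as observed in the remark preceding the corollary, their constructions can share a common parallel line $\gamma_{\epsilon_k}$ inside a $\delta$-tubular neighborhood $W$ of the piece. Indeed, defining
\begin{equation*}
F_j(\tau) = \int_{\gamma_\tau} \Bigl|\tfrac{1}{H_j^2} - \tfrac{r_0^2}{4}\Bigr|^2 ds + \int_{\gamma_\tau} |g^j - r_0^2 e^t \sigma|_\sigma^2 ds,
\end{equation*}
Fubini together with the $L^2$ hypotheses $\hat{g}^j \to \bar{g}^j$ and $\bar{g}^j \to \delta$ gives $\int_{-\epsilon_k}^{\epsilon_k} F_j(\tau)\,d\tau \to 0$, so a further subsequence in $j$ and an a.e.\ $\tau$ realize both vanishing integrals simultaneously. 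Adjoining short connector segments $\alpha,\beta$ of $\delta$-length $O(\epsilon_k)$ produces $C_k$ joining $p$ to $q$ with $L_\delta(C_k) \to L_\delta(C) = d_\delta(p,q)$. The hypotheses $H_0 \le H_j$ and $g^j \le C r_0^2 e^t \sigma$ bound the $\hat{g}^j$- and $\bar{g}^j$-lengths of vertical and tangential connectors respectively by $O(\epsilon_k)$ uniformly in $j$; the $C^0$ convergence $g^j(\cdot,0) \to r_0^2 \sigma$ is used when $p$ or $q$ lies on $\Sigma_0$, and handles any middle piece of $C$ lying in $\Sigma_0$.

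Chaining the two lemmas through the triangle inequality for lengths yields
\begin{equation*}
L_{\hat{g}^j}(C_k) \le L_{\bar{g}^j}(C_k) + O(\epsilon_k) + o_j(1) \le L_\delta(C_k) + O(\epsilon_k) + o_j(1),
\end{equation*}
so that $\limsup_{j \to \infty} d_{\hat{g}^j}(p,q) \le L_\delta(C_k) + O(\epsilon_k)$; sending $k \to \infty$ gives the desired bound.

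The main obstacle is the subsequence issue: both lemmas construct their parallel lines only after passing to a subsequence in $j$, so the chained length estimate holds a priori only along such subsequences. However, since the conclusion is a $\limsup$ upper bound on the full sequence, a standard contradiction argument suffices: if the bound failed, there would be $\eta > 0$ and a subsequence $j_m$ with $d_{\hat{g}^{j_m}}(p,q) > d_\delta(p,q)+\eta$, but the same hypotheses hold along $\{j_m\}$, so the lemmas furnish a further subsubsequence and a curve $C_k$ (for $k$ large) contradicting this strict inequality.
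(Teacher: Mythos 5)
Your proposal is correct and follows essentially the same route as the paper: decompose the $\delta$-minimizing curve into monotone pieces plus a possible $\Sigma_0$ piece, build approximating curves via Lemmas \ref{LineApprox} and \ref{LineApproxSigma} with a shared parallel line (exactly the combination noted in the remark preceding the corollary), bound the connectors using the uniform bounds on $H_i$ and $g^i$, and chain the length estimates through the triangle inequality before sending $j\to\infty$ and then $\epsilon\to 0$. Your explicit Fubini argument for a common $\tau$ and your closing contradiction argument to remove the subsequence dependence are welcome clarifications of points the paper leaves implicit, but they do not constitute a different approach.
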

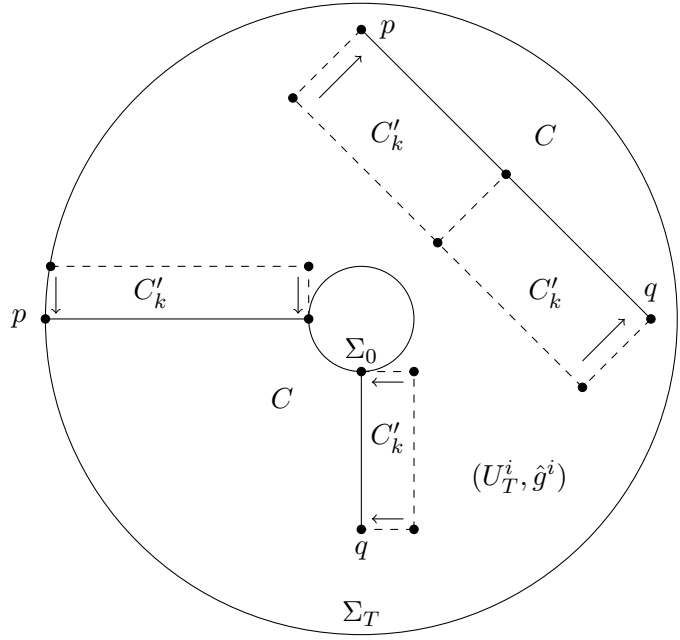
\begin{figure}[H]\label{fig ApproximatingCurves2}
\begin{tikzpicture}[scale=.7]
\draw (0,0) circle (1cm);
\draw (0,0) circle (6cm);
\draw (3,-3) node{$(U_T^i,\hat{g}^i)$};
\draw (0,-.6) node{$\Sigma_0$};
\draw (0,-5.6) node{$\Sigma_T$};
\draw (.5,5.5) node{$p$};
\draw[fill=black] (0,5.5) circle (.08cm);
\draw (5.5,.5) node{$q$};
\draw[fill=black] (5.5,0) circle (.08cm);
\draw (0,5.5) -- (5.5,0);
\draw[fill=black] (-1.3,4.2) circle (.08cm);
\draw[fill=black] (4.2,-1.3) circle (.08cm);
\draw[fill=black] (1.45,1.45) circle (.08cm);
\draw[fill=black] (2.75,2.75) circle (.08cm);
\draw[dashed] (1.45,1.45) -- (2.75,2.75);
\draw[dashed] (5.5,0) -- (4.2,-1.3);
\draw[dashed] (0,5.5) -- (-1.3,4.2);
\draw[dashed] (4.2,-1.3) -- (-1.3,4.2);
\draw[->] (-.8,4.2) -- (0,5);
\draw[->] (4.2,-.8) -- (5,0);
\draw (.5,3.5) node{$C_k'$};
\draw (3.5,.5) node{$C_k'$};
\draw (3.5,3.5) node{$C$};
\draw (-6.5,0) node{$p$};
\draw (-1,0) -- (-6,0);
\draw[fill=black] (-1,0) circle (.08 cm);
\draw[fill=black] (-6,0) circle (.08 cm);
\draw[fill=black] (-1,1) circle (.08 cm);
\draw[fill=black] (0,-1) circle (.08 cm);
\draw[fill=black] (0,-4) circle (.08 cm);
\draw[fill=black] (1,-1) circle (.08 cm);
\draw[fill=black] (1,-4) circle (.08 cm);
\draw[fill=black] (-5.9,1) circle (.08 cm);
\draw[dashed] (-5.9,1) -- (-1,1);
\draw[dashed] (-1,0) --(-1,1);
\draw[dashed] (1,-4) -- (1,-1);
\draw[dashed] (0,-1) -- (1,-1);
\draw[dashed] (1,-4) -- (0,-4);
\draw (0,-1) -- (0,-4);
\draw (-4,.5) node{$C_k'$};
\draw (.5, -2.25) node{$C_k'$};
\draw (-1.5,-1.5) node{$C$};
\draw (0,-4.4) node{$q$};
\draw[->] (-5.8,.8) -- (-5.8,.1);
\draw[->] (-1.2,.8) -- (-1.2,.1);
\draw[->] (.8,-3.8) -- (.2,-3.8);
\draw[->] (.8,-1.2) -- (.2,-1.2);
\end{tikzpicture}
\caption{Two families of curves $C_k'$ approximating the curve $C$ which realizes the distance between $p,q \in U_T^i$ with respect to $\delta$. }
\end{figure}
\begin{proof}
Let $C(t)=(T(s), \vec{\theta}(s))$ parameterized on $[0,1]$ be the minimizing geodesic with respect to $(\Sigma\times [0,T], \delta)$ between the points $p,q \in \Sigma \times (0,T]$. Then we know that $C(t)$ is a straight line in the annulus $(\Sigma\times [0,T], \delta)$, or can be broken down into two pieces which are straight lines and one piece which lies completely in $\Sigma_0$. 

For each straight line $C_l$, possibly further decomposing into straight lines which are monotone in $t$, we can apply Lemma \ref{LineApprox} and Lemma \ref{LineApproxSigma} to find a sequence of curves $C_k(t)$ so that $L_{\delta}(C_k) \rightarrow L_{\delta}(C_l)$. For the portion of the curve tangent to $\Sigma_0$ we use the fact that $\hat{g}^i$ and $\bar{g}^i$ agree on this set and $g_i(\cdot,0) \rightarrow r_0^2 \sigma$ in $C^0$. Putting this altogether we obtain an approximating curve $C'_k$ to find
\begin{align}
&d_{\hat{g}^j}(p,r)\le L_{\hat{g}^j}(C_k')
\\&= L_{\hat{g}^j}(C_k') -  L_{\bar{g}^j}(C_k')+ L_{\bar{g}^j}(C_k')- L_{\delta}(C_k')+ L_{\delta}(C_k')
\\&\le |L_{\hat{g}^j}(C_k') -  L_{\bar{g}^j}(C_k')| + |L_{\bar{g}^j}(C_k')- L_{\delta}(C_k')| + L_{\delta}(C_k')
\\&= |L_{\hat{g}^j}(C_k') -  L_{\bar{g}^j}(C_k')| + |L_{\bar{g}^j}(C_k')- L_{\delta}(C_k')| + L_{\delta}(C) + 4 C \epsilon
\\&= |L_{\hat{g}^j}(C_k') -  L_{\bar{g}^j}(C_k')| + |L_{\bar{g}^j}(C_k')- L_{\delta}(C_k')| +  4 C \epsilon + d_{\delta}(p,r).
\end{align}
So by taking limits and using Lemma \ref{LineApprox} and Lemma \ref{IMCF-L} we find that
\begin{align}
\limsup_{j \rightarrow \infty} d_{\hat{g}^j}(p,r) \le  d_{\delta}(p,r)+  2C \epsilon
\end{align}
and since this is true $\forall \epsilon >0$ the result follows.
\end{proof}

\subsection{Consequences of $C^0$ Assumptions}

It was noticed in \cite{BS} that a $C^0$ lower bound on $\hat{g}^i$ in terms of the limiting metric, in this case $\delta$, is needed in order to imply uniform, GH and SWIF convergence from $L^2$ convergence. In this subsection we deduce the consequence of this $C^0$ lower bound which is analogous to Lemma 4.1 in \cite{BS}.

\begin{lem}\label{distLowerBound} Let $p,q \in [0,T]\times \Sigma$ and assume that 
\begin{align}
\frac{1}{H_j(x,t)^2} &\ge \frac{r_0^2}{4}e^t- \frac{1}{j},
\\ diam(U_T,\hat{g}^j) &\le D
\end{align}
 then
 \begin{align}
 d_{\hat{g}_j}(p,q) - d_{\bar{g}^j}(p,q) &\ge -\frac{D}{\sqrt{j}}
 \end{align}
 and hence
\begin{align}
\liminf_{j \rightarrow \infty} \left(d_{\hat{g}_j}(p,q) - d_{\bar{g}_j}(p,q)\right)\ge 0.
\end{align}
\end{lem}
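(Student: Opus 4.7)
The plan is to compare the lengths of a single curve under the two metrics $\hat g^j$ and $\bar g^j$, which by construction differ only in their $dt^2$ coefficient. I would fix a $\hat g^j$-minimizing geodesic $C(s) = (T(s), \vec\theta(s))$ from $p$ to $q$, parametrized on $[0,1]$, so that $L_{\hat g^j}(C) = d_{\hat g^j}(p,q) \le D$. The hypothesis $\tfrac{1}{H_j^2} \ge \tfrac{r_0^2}{4} e^t - \tfrac{1}{j}$ immediately gives $\tfrac{r_0^2 e^{T(s)}}{4} \le \tfrac{1}{H_j(C(s))^2} + \tfrac{1}{j}$, so pointwise the squared $\bar g^j$-speed of $C$ exceeds the squared $\hat g^j$-speed by at most $T'(s)^2 / j$; the $g^j$ contribution cancels since it appears identically in both metrics.

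Using the elementary inequality $\sqrt{b} - \sqrt{a} \le \sqrt{b-a}$, valid whenever $0 \le a \le b$ (the reverse case only makes the pointwise difference nonpositive, which is better for us), I would deduce the pointwise estimate
\begin{align*}
\bigl(|C'(s)|_{\bar g^j} - |C'(s)|_{\hat g^j}\bigr)_{+} \le \frac{|T'(s)|}{\sqrt{j}}.
\end{align*}
Integrating over $[0,1]$ yields $L_{\bar g^j}(C) - L_{\hat g^j}(C) \le \tfrac{1}{\sqrt{j}} \int_0^1 |T'(s)|\,ds$, and since $d_{\bar g^j}(p,q) \le L_{\bar g^j}(C)$, the desired inequality will follow once the time-coordinate total variation $\int_0^1 |T'(s)|\,ds$ is controlled by a constant multiple of $D$.

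For that final step, the same hypothesis yields for all $j$ sufficiently large a uniform upper bound $H_j(x,t) \le \bar C$ depending only on $r_0$, since $H_j^{-2} \ge \tfrac{r_0^2}{4} - \tfrac{1}{j}$. Combined with the trivial pointwise bound $|T'(s)| \le H_j(C(s)) \cdot |C'(s)|_{\hat g^j}$, this gives $\int_0^1 |T'(s)|\,ds \le \bar C \, L_{\hat g^j}(C) \le \bar C D$, and the multiplicative constant $\bar C$ can be absorbed into the $D$ appearing in the statement. Sending $j \to \infty$ then delivers the liminf conclusion. The main obstacle is really only bookkeeping: conceptually this is a direct length comparison mirroring Lemma 4.1 of \cite{BS}, and the only subtlety is verifying that the natural upper bound on $H_j$ extracted from the hypothesis is strong enough to convert a length bound on $C$ into a total-variation bound on $T$.
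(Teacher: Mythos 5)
Your proposal is correct and follows essentially the same route as the paper: compare the two lengths of the $\hat g^j$-minimizing geodesic pointwise via $\sqrt{b}-\sqrt{a}\le\sqrt{b-a}$, reduce to $\frac{1}{\sqrt{j}}\int|T'|\,ds$, and bound that total variation using the diameter bound. If anything you are slightly more careful than the paper, which parameterizes by $\hat g^j$-arc length and asserts $\int|T_j'|\,ds\le D$ outright (implicitly using $|T_j'|\le H_j\le \bar C$), whereas you make the harmless multiplicative constant $\bar C$ explicit before absorbing it.
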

\begin{proof}
Let $C_j(s)=(T_j(s),\vec{\theta}_j(s))$ be the minimizing geodesic in $U_T^j$, parameterized by arc length w.r.t. $\hat{g}_j$, realizing the distance between $p$ and $q$ then compute
\begin{align}
 d_{\hat{g}_j}(p,q) &= \int_0^{L_{\hat{g}_j}(C_j)} \sqrt{\frac{T_j'(s)^2}{H_j(\vec{\theta}_j'(s),T_j(s))^2}+g^j(\vec{\theta_j}'(s),\vec{\theta_j}'(s))}ds
 \\&\ge \int_0^{L_{\hat{g}_j}(C_j)}  \sqrt{T_j'(s)^2\left ( \frac{r_0^2}{4}e^s- \frac{1}{j} \right )+g^j(\vec{\theta_j}'(s),\vec{\theta_j}'(s))}ds
  \\&\ge \int_0^{L_{\hat{g}_j}(C_j)} \sqrt{T_j'(s)^2\frac{r_0^2}{4}e^s+g^j(\vec{\theta_j}'(t),\vec{\theta_j}'(t))} - \frac{|T_j'(s)|}{\sqrt{j}}ds
  \\&\ge L_{\bar{g}^j}(C_j)-\frac{1}{\sqrt{j}}\int_0^{L_{\hat{g}_j}(C_j)}|T_j'(s)|ds \ge d_{\bar{g}^j}(p,q)-\frac{D}{\sqrt{j}}
\end{align}
where we used the inequality $\sqrt{|a-b|} \ge |\sqrt{a} - \sqrt{b}| \ge \sqrt{a} -\sqrt{b}$ in the third line. The last result follows by taking limits.
\end{proof}

\begin{lem}\label{distLowerBoundSigma} Let $p,q \in [0,T]\times \Sigma$ and assume that 
\begin{align}
\left( 1 - \frac{C}{j}\right)r_0^2e^t \sigma &\le g^i(x,t),\label{MetricAssumption}
\\ diam(U_T,\hat{g}^j) &\le D
\end{align}
 then
  \begin{align}
 d_{\bar{g}_j}(p,q) - d_{\delta}(p,q) &\ge -\frac{D}{\sqrt{j}\left(1-\frac{C}{j}\right)}.
 \end{align}
 and hence
\begin{align}
\liminf_{j \rightarrow \infty} \left(d_{\bar{g}_j}(p,q) - d_{\delta}(p,q)\right)\ge 0.
\end{align}
\end{lem}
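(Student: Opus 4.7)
The proof should closely parallel the argument of Lemma \ref{distLowerBound}, with the lower bound on $g^j$ playing the role that the $C^0$ bound on $1/H_j^2$ played there. The plan is to take the $\bar{g}^j$-minimizing curve from $p$ to $q$, apply the assumed one-sided metric comparison inside the integrand, and then extract the error with $\sqrt{\cdot\,}$-subadditivity.

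More concretely, let $C_j(s) = (T_j(s), \vec{\theta}_j(s))$ be a minimizing geodesic from $p$ to $q$ with respect to $\bar{g}^j$, parameterized by $\bar{g}^j$-arc length, so that $L_{\bar{g}^j}(C_j) = d_{\bar{g}^j}(p,q) \le D$. First I would write
\begin{align}
d_{\bar{g}^j}(p,q) = \int_0^{L_{\bar{g}^j}(C_j)} \sqrt{\tfrac{r_0^2}{4}e^{T_j} T_j'^2 + g^j(\vec{\theta}_j',\vec{\theta}_j')}\,ds,
\end{align}
and use \eqref{MetricAssumption} to replace $g^j(\vec{\theta}_j',\vec{\theta}_j')$ from below by $r_0^2 e^{T_j}\sigma(\vec{\theta}_j',\vec{\theta}_j') - \tfrac{C}{j} r_0^2 e^{T_j}\sigma(\vec{\theta}_j',\vec{\theta}_j')$. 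Applying the elementary inequality $\sqrt{a-b} \ge \sqrt{a} - \sqrt{b}$ (equivalently $\sqrt{a+b}\le\sqrt{a}+\sqrt{b}$ on the nonnegative reals), this would yield
\begin{align}
d_{\bar{g}^j}(p,q) \ge \int_0^{L_{\bar{g}^j}(C_j)}\!\!\!\sqrt{\tfrac{r_0^2}{4}e^{T_j} T_j'^2 + r_0^2 e^{T_j}\sigma(\vec{\theta}_j',\vec{\theta}_j')}\,ds - \sqrt{\tfrac{C}{j}}\int_0^{L_{\bar{g}^j}(C_j)}\!\!\!\sqrt{r_0^2 e^{T_j}\sigma(\vec{\theta}_j',\vec{\theta}_j')}\,ds.
\end{align}
The first integral is exactly $L_\delta(C_j) \ge d_\delta(p,q)$, so the task reduces to bounding the remainder integral.

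For that step, I would use \eqref{MetricAssumption} the other way: since $(1-C/j)r_0^2e^{T_j}\sigma \le g^j$, we have $r_0^2e^{T_j}\sigma(\vec{\theta}_j',\vec{\theta}_j') \le (1-C/j)^{-1} g^j(\vec{\theta}_j',\vec{\theta}_j')$, and the $\bar{g}^j$-arc-length parameterization gives $g^j(\vec{\theta}_j',\vec{\theta}_j') \le 1$ pointwise. Therefore
\begin{align}
\int_0^{L_{\bar{g}^j}(C_j)}\!\!\!\sqrt{r_0^2 e^{T_j}\sigma(\vec{\theta}_j',\vec{\theta}_j')}\,ds \le \frac{L_{\bar{g}^j}(C_j)}{\sqrt{1-C/j}} \le \frac{D}{\sqrt{1-C/j}},
\end{align}
which when combined with the previous display delivers the quantitative bound $d_{\bar{g}^j}(p,q) - d_\delta(p,q) \ge -D/(\sqrt{j}(1-C/j))$ up to absorbing $\sqrt{C}$ into the constant $C$. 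The $\liminf$ statement then follows by letting $j\to\infty$.

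There is no real obstacle here; the only subtle point is choosing to parameterize by $\bar{g}^j$-arc length rather than by $\delta$-arc length so that the remainder integral is controlled by the $\bar{g}^j$-diameter (which is bounded by $D$ via the comparison $\bar{g}^j \le \hat{g}^j$ coming from $1/H_j^2 \ge r_0^2 e^t/4 - 1/j$, or directly by the hypothesis). The one-sided nature of the hypothesis \eqref{MetricAssumption} is crucial: only the lower bound on $g^j$ enters, exactly as anticipated in the remarks following Theorem \ref{SPMTLessCurv} and Theorem \ref{WarpConv}.
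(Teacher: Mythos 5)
Your proposal is correct and follows essentially the same route as the paper's proof: the $\bar{g}^j$-arc-length parameterization of the $\bar{g}^j$-minimizing geodesic, the one-sided bound \eqref{MetricAssumption} inserted in the integrand, the $\sqrt{a-b}\ge\sqrt{a}-\sqrt{b}$ splitting, and the control of the remainder via $g^j(\vec{\theta}_j',\vec{\theta}_j')\le 1$ together with the diameter bound. The only difference is the harmless constant $\sqrt{C}/\sqrt{1-C/j}$ versus the paper's $1/(1-C/j)$, which does not affect the $\liminf$ conclusion.
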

\begin{proof}
Let $C_j(s)=(T_j(s),\vec{\theta}_j(s))$ be the minimizing geodesic with respect to $\bar{g}^j$, parameterized by arc length w.r.t. $\bar{g}_j$, realizing the distance between $p$ and $q$ then compute
\begin{align}
 &d_{\bar{g}_j}(p,q) = \int_0^{L_{\bar{g}_j}(C_j)} \sqrt{\frac{r_0^2e^sT_j'(s)^2}{4}+g^j(\vec{\theta_j}'(s),\vec{\theta_j}'(s))}ds
 \\&\ge \int_0^{L_{\bar{g}_j}(C_j)}  \sqrt{\frac{r_0^2e^sT_j'(s)^2}{4}+\left( 1 - \frac{C}{j}\right)r_0^2 e^s \sigma(\vec{\theta_j}'(s),\vec{\theta_j}'(s))}ds\label{MetricAssumUsed1}
  \\&\ge \int_0^{L_{\bar{g}_j}(C_j)} \sqrt{\frac{r_0^2e^sT_j'(s)^2}{4}+r_0^2e^s\sigma(\vec{\theta_j}'(s),\vec{\theta_j}'(s))}ds\label{sqrtIneg1}
  \\& -  \int_0^{L_{\bar{g}_j}(C_j)}\frac{r_0e^{s/2}|\sigma(\vec{\theta_j}'(s),\vec{\theta_j}'(s))|}{\sqrt{j}}ds\label{sqrtIneq2}
  \\&\ge L_{\delta}(C_j)-\frac{1}{\sqrt{j}\left(1-\frac{C}{j}\right)}\int_0^{L_{\bar{g}_j}(C_j)}|g^j(\vec{\theta_j}'(s),\vec{\theta_j}'(s))|ds \label{MetricAssumUsed2}
  \\&\ge d_{\delta}(p,q)-\frac{D}{\sqrt{j}\left(1-\frac{C}{j}\right)}
\end{align}
where we used the inequality $\sqrt{|a-b|} \ge |\sqrt{a} - \sqrt{b}| \ge \sqrt{a} -\sqrt{b}$ in \eqref{sqrtIneg1} and \eqref{sqrtIneq2}. We used \eqref{MetricAssumption} in lines \eqref{MetricAssumUsed1} and \eqref{MetricAssumUsed2}. The last result follows by taking limits.
\end{proof}

\begin{Cor}\label{distLowerBound2} Let $p,q \in [0,T]\times \Sigma$ and assume that 
\begin{align}
\frac{1}{H_j(x,t)^2} &\ge \frac{r_0^2}{4}e^t- \frac{1}{j}, 
\\ diam(U_T,\bar{g}^j) &\le D, 
\\ \left( 1 - \frac{C}{j}\right) r_0^2 e^t\sigma &\le g^i(x,t)\le Cr_0^2 e^t \sigma,\label{C0metricbounds}
\end{align}
then
\begin{align}
\liminf_{j \rightarrow \infty} d_{\hat{g}_j}(p,q) \ge d_{\delta}(p,q).
\end{align}
\end{Cor}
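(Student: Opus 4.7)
The plan is to obtain the result by a simple triangle-inequality-style decomposition together with the two preceding lemmas. I would begin by writing
\[
d_{\hat g^j}(p,q) - d_\delta(p,q) \;=\; \bigl[d_{\hat g^j}(p,q) - d_{\bar g^j}(p,q)\bigr] \;+\; \bigl[d_{\bar g^j}(p,q) - d_\delta(p,q)\bigr],
\]
and take $\liminf$ of both sides as $j \to \infty$, using superadditivity of $\liminf$. The task then reduces to showing that both bracketed terms have non-negative $\liminf$.

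For the first bracket, I would invoke Lemma \ref{distLowerBound}: the hypothesis $\tfrac{1}{H_j(x,t)^2} \ge \tfrac{r_0^2}{4}e^t - \tfrac{1}{j}$ is given in the corollary, and the required diameter bound on $\hat g^j$ follows from the $C^0$ metric bounds \eqref{C0metricbounds} — indeed, the upper bound $g^j \le C r_0^2 e^t \sigma$ controls the tangential component, and the only integral appearing in the proof of that lemma is $\int |T_j'(s)|\,ds$ along a minimizing geodesic of $\hat g^j$, which is controlled in the same way as in the cited lemma's proof. For the second bracket, I would invoke Lemma \ref{distLowerBoundSigma}: the hypothesis $(1-\tfrac{C}{j}) r_0^2 e^t \sigma \le g^j$ is given, as is the diameter bound on $\bar g^j$. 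Each lemma yields a pointwise estimate of order $O(1/\sqrt{j})$ that vanishes in the limit, so the two $\liminf$ inequalities together give the stated conclusion.

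The main, and essentially only, obstacle is the bookkeeping of the diameter hypothesis needed to apply Lemma \ref{distLowerBound}: we are given a diameter bound for $\bar g^j$, but since only a lower bound on $1/H_j^2$ is assumed, $\hat g^j$ could in principle have large diameter. The resolution is to observe that one does not need an a priori diameter bound on $\hat g^j$ but only a bound on the total $t$-variation $\int |T_j'(s)|\,ds$ of the chosen minimizing geodesic, and this is bounded uniformly in $j$ for a fixed pair $(p,q)$ either by monotonizing the geodesic in $t$ or by using the arc length constraint $(T_j')^2 / H_j^2 \le 1$ in combination with the hypotheses. With that in hand, both Lemmas apply and the corollary follows.
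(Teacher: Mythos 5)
Your proposal is correct and is essentially identical to the paper's proof, which simply invokes Lemma \ref{distLowerBound} and Lemma \ref{distLowerBoundSigma}, adds the two resulting inequalities, and takes limits. Your extra care about which metric the diameter bound refers to is reasonable (the paper silently applies both lemmas despite the corollary stating the bound for $\bar{g}^j$ while the lemmas are phrased for $\hat{g}^j$), but it does not change the argument.
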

\begin{proof}
By Lemma \ref{distLowerBound} we know
\begin{align}
 d_{\hat{g}_j}(p,q) - d_{\bar{g}^j}(p,q) &\ge -\frac{D}{\sqrt{j}}
 \end{align}
 and by Lemma \ref{distLowerBoundSigma} we know
 \begin{align}
 d_{\bar{g}_j}(p,q) - d_{\delta}(p,q) &\ge-\frac{D}{\sqrt{j}\left(1-\frac{C}{j}\right)}.
 \end{align}
 The result follows by combining these two inequalities and taking limits.
\end{proof}
\begin{rmrk}
Notice that if 
\begin{align}
\|g^i - r_0^2e^t \sigma\|_{C^0} &\le \frac{C}{j} \text{ } \forall (x,t) \in \Sigma \times [0,T]
\end{align}
then \eqref{C0metricbounds} is satisfied and hence the corollary applies to this case as well.
\end{rmrk}

\subsection{Proof of Convergence}\label{subsec:ConvProof}

Now we are able to show uniform, GH, and Flat convergence for the desired metrics.
 \begin{thm}\label{IMCFConv}
 Assume 
 \begin{align}
 &0 < H_0 \le H^j(x,t)\le H_1 < \infty,
 \\ &0 < \left( 1 - \frac{C_0}{j}\right) r_0^2 e^t \sigma \le g^i(x,t) \le C_1 r_0^2 e^t \sigma, 
 \\&\frac{1}{H_j(x,t)^2} \ge \frac{r_0^2}{4}e^t- \frac{1}{j}, 
 \\&\text{diam}(U_T,\hat{g}^j) \le D, 
 \\& \hat{g}^j \rightarrow \bar{g}^j \text{ in } L^2,\text{ and }
 \\&\bar{g}^j \rightarrow \delta \text{ in } L^2,
 \end{align}
  then $\hat{g}$ converges uniformly to $\delta$ as well as
  \begin{align}
  (U_T^j,\hat{g}^j) &\GHto (\Sigma\times[0,T],\delta),
  \\(U_T^j,\hat{g}^j) &\Fto (\Sigma\times[0,T],\delta).
  \end{align} 
 \end{thm}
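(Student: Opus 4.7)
The plan is to combine the pointwise lower and upper bounds on limits of distances established in Corollary \ref{limsupEst} and Corollary \ref{distLowerBound2} with the compactness result of Huang--Lee--Sormani (Theorem \ref{HLS-thm}) to upgrade pointwise distance convergence to uniform, GH, and SWIF convergence.

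First I would verify that the hypotheses of the two key distance-estimate corollaries are in force. The hypotheses of Corollary \ref{limsupEst} require $H_i \ge H_0 > 0$, the upper bound $g^i \le C r_0^2 e^t \sigma$, the two $L^2$ convergences, and $g^i(\cdot,0) \to r_0^2 \sigma$ in $C^0$; the last follows immediately from squeezing $(1-C_0/j) r_0^2 \sigma \le g^i(\cdot,0) \le C_1 r_0^2 \sigma$, which is the $t=0$ case of the assumed two-sided bound (after noting the lower-bound constant forces $C_1 \ge 1$, which is harmless). The hypotheses of Corollary \ref{distLowerBound2} require the pointwise lower bound $1/H_j^2 \ge r_0^2 e^t/4 - 1/j$, diameter bound on $\bar{g}^j$, and the two-sided comparison of $g^i$ to $r_0^2 e^t \sigma$. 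The diameter bound on $\bar{g}^j$ follows from the one on $\hat{g}^j$ together with $H_0 \le H_j \le H_1$ and the two-sided comparison of $g^i$ to $r_0^2 e^t \sigma$, since these bounds make $\hat{g}^j$ and $\bar{g}^j$ uniformly bi-Lipschitz equivalent.

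Applying both corollaries then yields the pointwise identity
\begin{equation*}
\lim_{j\to\infty} d_{\hat{g}^j}(p,q) = d_\delta(p,q) \quad \text{for all } p,q \in \Sigma \times [0,T].
\end{equation*}
Next, I would apply Theorem \ref{HLS-thm} with base metric $d_0 = d_\delta$. The bi-Lipschitz bound \eqref{d_j} is immediate: the two-sided comparison of $g^i$ to $r_0^2 e^t \sigma$ together with $H_0 \le H_j \le H_1$ yields constants $0 < c_1 \le c_2 < \infty$ with $c_1 \delta \le \hat{g}^j \le c_2 \delta$ (for $j$ large, using $1 - C_0/j \ge 1/2$), hence $c_1^{1/2} d_\delta \le d_{\hat{g}^j} \le c_2^{1/2} d_\delta$. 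Theorem \ref{HLS-thm} then produces a subsequence $d_{\hat{g}^j}$ converging uniformly to a length metric $d_\infty$, and the pointwise identification above forces $d_\infty = d_\delta$ on all of $\Sigma \times [0,T]$. Since the subsequential limit is unique, the full sequence converges uniformly to $d_\delta$, which is exactly the uniform convergence $\hat{g}^j \to \delta$ claimed. The GH and SWIF conclusions then follow directly from \eqref{GHj} and \eqref{Fj} of Theorem \ref{HLS-thm}, applied with the integral current structure on $S^2 \times [0,T]$ coming from $\delta$, since the bi-Lipschitz bound controls the mass uniformly.

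The main obstacle, which has effectively already been absorbed into Corollary \ref{limsupEst} and Corollary \ref{distLowerBound2}, is passing from $L^2$ convergence of the metric tensors to pointwise convergence of the induced distance functions: $L^2$ convergence of $\hat{g}^j$ gives no control on the length of a single geodesic (a measure-zero curve). The resolution, carried out in Lemma \ref{LineApprox}, Lemma \ref{LineApproxSigma}, and their consequences, was to replace minimizing geodesics by straight lines in nearby parallel tubes where the $L^2$ deviation from the target metric genuinely integrates to zero along the curve, using Fubini to select such a curve. Once this pointwise distance convergence is in hand, the passage from pointwise to uniform convergence, and from uniform convergence of distances to GH and SWIF convergence of the associated integral current spaces, is handled cleanly by Theorem \ref{HLS-thm}.
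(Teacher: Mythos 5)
Your proposal follows the paper's proof almost step for step: establish uniform two-sided Lipschitz bounds comparing $\hat{g}^j$ to a fixed background metric, invoke Theorem \ref{HLS-thm} to extract a subsequence converging uniformly (hence in GH and SWIF) to some length metric, identify that limit as $\delta$ by combining the pointwise bounds of Corollary \ref{limsupEst} and Corollary \ref{distLowerBound2}, and then discard the subsequence by uniqueness of the limit. The only cosmetic difference is that you take $d_0 = d_\delta$ as the reference metric in Theorem \ref{HLS-thm} while the paper uses $\delta_c = dt^2 + \sigma$; these are bi-Lipschitz equivalent, so nothing changes.

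One step in your verification is not correct as written. You claim that the hypothesis $g^i(\cdot,0) \rightarrow r_0^2\sigma$ in $C^0$ needed for Corollary \ref{limsupEst} ``follows immediately from squeezing'' $\left(1-\tfrac{C_0}{j}\right) r_0^2\sigma \le g^i(\cdot,0) \le C_1 r_0^2 \sigma$. A squeeze requires both bounds to converge to the same limit, but $C_1$ is a fixed constant that need not tend to $1$, so the two-sided bound only gives uniform comparability, not $C^0$ convergence on $\Sigma_0$. This hypothesis of Corollary \ref{limsupEst} is genuinely used there (to control the portion of the $\delta$-minimizing geodesic lying in $\Sigma_0$, where no $t$-parallel-line approximation is available), and it does not follow from the stated hypotheses of Theorem \ref{IMCFConv}. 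To be fair, the paper's own proof silently skips this same verification; in the intended applications the $C^0$ convergence of $g^i(\cdot,0)$ is supplied separately by Theorem \ref{ConvForEachSlicet} or Theorem \ref{ConvForZeroSlice}. So your argument is structurally sound, but you should either add $g^i(\cdot,0)\to r_0^2\sigma$ in $C^0$ as an explicit hypothesis or cite the slice-convergence results rather than deriving it from the metric bounds.
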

 \begin{proof}
 Define $\delta_c = dt^2 + \sigma$ and by the assumptions on $H^j(x,t)$ and $g^i(x,t)$ we find that 
 \begin{align}
\min(\frac{1}{H_1^2},c_0) \delta_c \le \frac{1}{H_1^2}dt^2 + c_0 \sigma \le  \hat{g}^i \le \frac{1}{H_0^2}dt^2 + c_1 \sigma \le \max(\frac{1}{H_0^2},c_1) \delta_c
 \end{align}
 where $c_0 = \left( 1 - C_0\right) r_0^2$ and $c_1 = C_1 r_0^2 e^T$.
 Hence for 
 \begin{align}
 \lambda = \max(H_0^{-2},c_1, H_1^2, c_0^{-1})
 \end{align}
 we can find the Lipschitz bounds
 \begin{align}
 \frac{1}{\lambda} \le \frac{\hat{g}^i(p,q)}{\delta_c(p,q)} \le \lambda.
 \end{align}
 Now we can apply Theorem \ref{HLS-thm} to conclude that a subsequence $\hat{g}^j$ converges in the uniform, GH and Flat sense to some length metric $g_{\infty}$ so that
  \begin{align}
 \frac{1}{\lambda} \le \frac{g_{\infty}(p,q)}{\delta_c(p,q)} \le \lambda.
 \end{align}
 Now our goal is to show that $g_{\infty} = \delta$ by observing that $\displaystyle \lim_{j \rightarrow \infty} d_{\hat{g}^j}(p,q) = \delta(p,q)$ pointwise. To this end, let $p,q \in [0,T]\times \Sigma$ so by Corollary \ref{distLowerBound2} we have
\begin{align}\label{distliminf}
\liminf_{j \rightarrow \infty} d_{\hat{g}_j}(p,q) \ge d_{\delta}(p,q),
\end{align}
and by Corollary \ref{limsupEst} we have
\begin{align}
\limsup_{j \rightarrow \infty} d_j(p,q)\le  d_{\delta}(p,q),
\end{align}
and hence we find
\begin{align}
\lim_{j \rightarrow \infty} d_{\hat{g}_j}(p,q) = d_{\delta}(p,q),
\end{align}
which gives pointwise convergence of distances and hence $g_{\infty} = \delta$. We can get rid of the need for subsequences by noticing that every subsequence of the original sequence converges to the same limit.
 \end{proof}
 
 \section{Stability of PMT and RPI} \label{sect-Stability}
 
 In the last section we showed what hypotheses we need in order to show uniform, GH and SWIF convergence of $\hat{g}^i$ to $\delta$. In this section we want to show how to use the results of section \ref{sect-Hawk} to obtain the hypotheses of section \ref{sect-IMCF} in order to prove the main theorems of this paper.
 
\subsection{Obtaining the Hypotheses of Theorem \ref{IMCFConv}} \label{subsec: Obtaining Hypotheses}

We first want to show that $g^i \rightarrow r_0^2 e^t \sigma$ uniformly which is where the curvature assumptions of the main theorems come into play. In order to take advantage of these curvature assumptions in combination with Lemma \ref{GoToZero} and Lemma \ref{WeakRicciEstimate} we will use the following result.

\begin{thm}[Theroem 78 \cite{P}]\label{Rigidity2}
Given $n \ge 2,$ $v,D \in (0,\infty),$ and $\lambda \in \R,$ there is an $\epsilon = \epsilon(n, \lambda, v, D) >0$ such that any closed Riemannian $n-$manifold $(M,g)$ with
\begin{align}
diam(M) &\le D,
\\Vol(M) &\ge v,
\\|K-\lambda| &\le \epsilon,
\end{align}
where $K$ is the sectional curvature of $(M,g)$, is $C^{1,\alpha}$ close to a metric of constant curvature $\lambda$.
\end{thm}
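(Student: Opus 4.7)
The plan is a compactness-contradiction argument based on the Cheeger--Gromov convergence theorem. Suppose the conclusion fails for some fixed $n, \lambda, v, D$. Then there exists a sequence of closed Riemannian $n$-manifolds $(M_i, g_i)$ satisfying $\mathrm{diam}(M_i)\le D$, $\mathrm{Vol}(M_i)\ge v$, and $|K_i - \lambda|\le 1/i$, yet such that each $(M_i, g_i)$ is at $C^{1,\alpha}$-distance at least $\delta_0 > 0$ from any metric of constant curvature $\lambda$, for some fixed $\delta_0$.

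First I would invoke Cheeger's injectivity radius estimate. The two-sided sectional curvature bound $|K_i|\le |\lambda| + 1$, combined with the upper diameter bound and the positive lower volume bound, yields a uniform injectivity radius estimate $\mathrm{inj}(M_i)\ge \iota_0 > 0$ independent of $i$. Next I would apply the Cheeger--Gromov $C^{1,\alpha}$-compactness theorem: in harmonic coordinates on balls of fixed radius, the uniform bounds on curvature, diameter, and injectivity radius yield uniform $C^{1,\alpha}$ bounds on the metric components. Passing to a subsequence, $(M_i, g_i)$ converges in the $C^{1,\alpha}$ Cheeger--Gromov sense to some limit $(M_\infty, g_\infty)$ with $g_\infty \in C^{1,\alpha}$, and $M_\infty$ is diffeomorphic to $M_i$ for large $i$.

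The main obstacle is to verify that $(M_\infty, g_\infty)$ genuinely has constant sectional curvature $\lambda$, since only $C^{1,\alpha}$ convergence of metrics is available and curvature is a second-order quantity. I would handle this in harmonic coordinates, where the Ricci tensor satisfies a quasi-linear elliptic system in $g$. The hypothesis $|K_i - \lambda|\to 0$ in $C^0$ passes to a weak curvature equation for $g_\infty$, and standard elliptic bootstrapping upgrades $g_\infty$ to be smooth with $K_{g_\infty}\equiv \lambda$ in the classical sense; alternatively, one may test the convergence against smooth compactly supported forms and use integration by parts to transfer derivatives from the metric.

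Finally, since $(M_i, g_i)\to (M_\infty, g_\infty)$ in $C^{1,\alpha}$ and $(M_\infty, g_\infty)$ is a metric of constant curvature $\lambda$, for all sufficiently large $i$ the manifold $(M_i, g_i)$ lies within $\delta_0$ of such a metric, contradicting the standing assumption. The value of $\epsilon = \epsilon(n, \lambda, v, D)$ therefore exists, completing the proof. The conceptual heart of the argument is the harmonic-coordinate route to $C^{1,\alpha}$-compactness together with the rigidity of constant curvature as the only possible limit under the curvature-pinching hypothesis.
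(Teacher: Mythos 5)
This statement is quoted verbatim from Petersen's book and the paper supplies no proof of its own, so the only meaningful comparison is with the standard argument in the cited source; your compactness-and-contradiction proof (Cheeger injectivity radius bound, harmonic-coordinate $C^{1,\alpha}$ precompactness, weak passage of the pinched curvature to the limit, and elliptic bootstrapping to identify the limit as a smooth constant-curvature metric) is exactly that argument and is correct.
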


We will also need an integral pinching theorem that was used in the author's previous paper \cite{BA2}.

\begin{Cor}(Petersen and Wei \cite{PW})\label{Rigidity}
Given any integer $n \ge 2$, and numbers $p > n/2$, $\lambda \in \R$, $v >0$, $D < \infty$, one can find $\epsilon = \epsilon(n,p,\lambda, D) > 0$ such that a closed Riemannian $n-$manifold $(\Sigma,g)$ with
\begin{align}
&\text{vol}(\Sigma)\ge v
\\&\text{diam}(\Sigma) \le D
\\& \frac{1}{|\Sigma|} \int_{\Sigma} \|R - \lambda g \circ g\|^p d \mu \le \epsilon(n,p,\lambda,D)\label{Eq-l2curv}
\end{align}
is $C^{\alpha}$, $\alpha < 2 - \frac{n}{p}$ close to a constant curvature metric on $\Sigma$.
\end{Cor}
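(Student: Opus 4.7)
The plan is to prove this by a contradiction/compactness argument combined with harmonic coordinates and elliptic regularity, following the general strategy of Petersen and Wei. Suppose the statement fails. Then there is a sequence of closed Riemannian $n$-manifolds $(\Sigma_i, g_i)$ with $\mathrm{vol}(\Sigma_i) \ge v$, $\mathrm{diam}(\Sigma_i) \le D$, and
\begin{align*}
\frac{1}{|\Sigma_i|} \int_{\Sigma_i} \|R_i - \lambda g_i \circ g_i\|^p \, d\mu_i \to 0,
\end{align*}
but no subsequence is $C^\alpha$ close to any constant curvature $\lambda$ metric.

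First, I would extract a Gromov-Hausdorff convergent subsequence. The hypothesis on the full Riemann tensor minus the constant curvature model implies in particular an integral Ricci bound of the form $\fint |\mathrm{Ric} - (n-1)\lambda g|^p \to 0$. Combined with $\mathrm{diam} \le D$ and $\mathrm{vol} \ge v$, this places the sequence in the class handled by the integral-curvature Cheeger-Gromov type compactness (Petersen-Wei, and earlier Anderson-Cheeger in the pointwise case), giving a GH-convergent subsequence to some metric space $(\Sigma_\infty, d_\infty)$.

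Next, I would upgrade this to $C^{1,\alpha}$ convergence via harmonic coordinates. The key input is the existence of harmonic coordinate charts with uniform harmonic radius under small $L^p$ integral Ricci, which again is due to Petersen-Wei. In such coordinates one has the elliptic system
\begin{align*}
\Delta_g g_{ij} = -2 \mathrm{Ric}_{ij} + Q(g, \partial g),
\end{align*}
with $Q$ quadratic in $\partial g$. Because $p > n/2$, standard $W^{2,p}$ elliptic estimates bootstrap the $L^p$ control of $\mathrm{Ric}$ to uniform $W^{2,p}$ control on the $g_{ij}$ in each harmonic chart. The Morrey/Sobolev embedding $W^{2,p} \hookrightarrow C^{1,\alpha}$ for $\alpha < 2 - n/p$ then yields a subsequence converging in $C^{1,\alpha}$ on each chart to a limit metric $g_\infty$ of class $C^{1,\alpha}$, and the limit is genuinely smooth only as far as these estimates allow.

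Finally, I would identify the limit. Because $\|R_i - \lambda g_i \circ g_i\|^p$ is integrable and tends to zero in $L^1$, and the curvature tensors of the $g_i$ converge weakly to the curvature of $g_\infty$ under the $W^{2,p}$ convergence, the limit metric satisfies $R_\infty = \lambda \, g_\infty \circ g_\infty$ in the weak sense, hence in the classical sense by elliptic regularity. Thus $g_\infty$ is a $C^{1,\alpha}$ (in fact smooth, by bootstrapping) constant sectional curvature $\lambda$ metric on $\Sigma_\infty$, contradicting the assumption that no subsequence was $C^\alpha$ close to such a metric.

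The main obstacle is step two: proving that harmonic coordinates of controlled harmonic radius exist and that one has uniform $W^{2,p}$ bounds purely under integral curvature smallness (together with diameter and noncollapsing). This is precisely the technical heart of the Petersen-Wei machinery \cite{PW}, and in the context of this paper it is appropriate to invoke it as a black box rather than reprove it. Once that tool is available, the rest of the argument is a clean compactness-plus-limit identification.
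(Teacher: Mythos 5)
The paper does not actually prove this corollary: it is stated as a quoted result of Petersen and Wei \cite{PW} and invoked as a black box, the only accompanying text being the remark specializing to $n=2$, $p=2$, where $R = K\, g\circ g$ and $\|R-\lambda g\circ g\|^2 = 2^4|K-\lambda|^2$. Your proposal therefore cannot match a proof in the paper; judged on its own terms it is a correct outline of how such integral pinching statements are established in the literature: argue by contradiction, use non-collapsing, the diameter bound, and smallness of the integral Ricci defect to extract a convergent subsequence, upgrade to uniform $W^{2,p}$ control of the metric components in harmonic coordinates via the elliptic system $\Delta_g g_{ij} = -2\operatorname{Ric}_{ij} + Q(g,\partial g)$, pass to a limit by Sobolev embedding, and identify the limit as a constant curvature $\lambda$ metric since the curvature defect tends to zero in $L^p$. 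Two caveats. First, the entire technical content --- existence of harmonic charts of uniformly controlled harmonic radius with $W^{2,p}$ bounds under integral curvature smallness --- is precisely the Petersen--Wei machinery you defer to, so your argument is a proof modulo the same citation the paper already makes; that is an acceptable division of labor here, but one should be aware that nothing beyond the reduction to that lemma has actually been verified. Second, a minor imprecision: for $n/2 < p < n$ the Morrey embedding gives $W^{2,p}\hookrightarrow C^{0,\,2-n/p}$, not an embedding into $C^{1,\alpha}$; the conclusion ``$C^{\alpha}$ close with $\alpha < 2-n/p$'' should be read with the convention that $C^{\alpha}$ for $\alpha\in(1,2)$ means $C^{1,\alpha-1}$, which only occurs when $p>n$. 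In the application in this paper ($n=p=2$) one has $2-n/p=1$, so only $C^{0,\alpha}$, $\alpha<1$, closeness is obtained, consistent with how the corollary is used in Theorem \ref{ConvForZeroSlice}.
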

In our case $n=2$, $p = 2$, $\alpha < 1$ and the Riemann curvature tensor is $R = K g \circ g$, where $g \circ g$ represents the Kulkarni-Nomizu product, and so $\|R - \lambda g \circ g\|^2 = \|g \circ g\|^2 |K - \lambda|^2 = 2^4|K - \lambda|^2$.

We will apply this in the case where $n=2$ and $\Sigma$ has the topology of the sphere so we know that the constant curvature metric which $\Sigma_t^i$ will converge to is the round sphere. We now apply this result in order to obtain $C^{\alpha}$ convergence of $\Sigma_t^i$ to $S^2(r_0e^{t/2})$.
 
 \begin{thm}\label{ConvForEachSlicet}
 Let $U_{T}^i \subset M_i^3$ be a sequence s.t. $U_{T}^i\subset \mathcal{M}_{r_0,H_0,I_0}^{T,H_1,A_1}$ and $m_H(\Sigma_{T}^i) \rightarrow 0$ as $i \rightarrow \infty$. If we assume that 
 \begin{align}
 |K^i|_{C^{0,\alpha}(\Sigma\times[0,T])} &\le C
\\|Rc^i(\nu,\nu)|_{C^{0,\alpha}(\Sigma\times[0,T])} &\le C
 \\ |R^i|_{C^{0,\alpha}(\Sigma\times[0,T])} &\le C
\\diam(\Sigma_t^i)&\le D\text{ }\forall t \in [0,T]
 \end{align}
 then
 \begin{align}
g^i \rightarrow r_0^2 e^t \sigma 
 \end{align}
  in $C^{1,\alpha}(\Sigma\times \{t\})$ uniformly for every $t \in [0,T]$, as well as, 
  \begin{align}\label{RicciLowerBound}
 Rc^i(\nu,\nu)&\ge - \frac{C}{i}.
 \end{align} 
 This implies that
  \begin{align}
  \bar{g}^i \rightarrow \delta
  \end{align}
  in $C^{1,\alpha}(\Sigma\times  [0,T])$.
 \end{thm}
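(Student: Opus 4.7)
The overall strategy is to upgrade the integral and weak convergence statements from Section \ref{subsect-PreviousIMCFStability} to pointwise $C^0$ convergence by combining them with the $C^{0,\alpha}$ bounds via Arzela-Ascoli, and then apply the slicewise rigidity Theorem \ref{Rigidity2} to pass to $C^{1,\alpha}$ convergence of the induced metric on each leaf.

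First, I would show uniform convergence $Rc^i(\nu,\nu) \to 0$ in $C^0(\Sigma \times [0,T])$. The $C^{0,\alpha}$ bound makes $\{Rc^i(\nu,\nu)\}$ equicontinuous on the compact space $\Sigma \times [0,T]$, so by Arzela-Ascoli every subsequence has a $C^0$-convergent sub-subsequence with limit $\rho_\infty$. Lemma \ref{WeakRicciEstimate} together with the parameterization $d\mu_t^i = r_0^2 e^t d\sigma$ of Proposition \ref{avgH} forces $\int_0^T\int_{\Sigma} \rho_\infty \phi \, r_0^2 e^t d\sigma\, dt = 0$ for all $\phi \in C_c^1(\Sigma \times (0,T))$, hence $\rho_\infty \equiv 0$ in the interior, and by continuity on the closure. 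A standard subsequence-of-subsequence argument then gives $Rc^i(\nu,\nu) \to 0$ in $C^0(\Sigma\times[0,T])$, which is exactly the pointwise estimate \eqref{RicciLowerBound}.

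Next, the same Arzela-Ascoli argument applied to $K^i$ produces a $C^0$ limit $K_\infty$ along a subsequence. To identify $K_\infty$, I would use the Gauss equation
\begin{equation*}
K^i \;=\; K_{12}^i \;+\; \lambda_1^i \lambda_2^i \;=\; K_{12}^i \;+\; \tfrac{1}{4}\!\left( (H^i)^2 - (\lambda_1^i-\lambda_2^i)^2 \right).
\end{equation*}
The corollary after Lemma \ref{WeakRicciEstimate} gives weak convergence $K_{12}^i \to 0$; Lemma \ref{GoToZero} with the uniform bound $|A^i| \le A_1$ gives weak convergence $(\lambda_1^i-\lambda_2^i)^2 \to 0$; and Proposition \ref{avgH} with Lemma \ref{naiveEstimate} gives $(H^i)^2 \to 4/(r_0^2 e^t)$ weakly in spacetime. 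Substituting in the Gauss equation yields weak convergence $K^i \to 1/(r_0^2 e^t)$, which combined with the $C^0$ convergence $K^i \to K_\infty$ identifies $K_\infty(x,t) \equiv 1/(r_0^2 e^t)$. In particular $|K^i - 1/(r_0^2 e^t)|_{C^0(\Sigma_t^i)} \to 0$ uniformly in $t \in [0,T]$.

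Finally, with this uniform curvature estimate together with the hypothesis $diam(\Sigma_t^i) \le D$ and the volume lower bound $|\Sigma_t^i| = 4\pi r_0^2 e^t \ge 4\pi r_0^2 > 0$ from Lemma \ref{naiveEstimate}, Theorem \ref{Rigidity2} applies on each slice with $\lambda = 1/(r_0^2 e^t)$, yielding $C^{1,\alpha}$ closeness of $(\Sigma, g^i(\cdot,t))$ to $(\Sigma, r_0^2 e^t \sigma)$ uniformly in $t$ (the area-preserving parameterization of Proposition \ref{avgH} fixing the sphere on which both metrics live). Since the $dt^2$ components of $\bar{g}^i$ and $\delta$ coincide, $\bar{g}^i - \delta$ is purely tangential and equals $g^i(\cdot,t) - r_0^2 e^t \sigma$, so the slicewise convergence immediately gives $\bar{g}^i \to \delta$ in $C^{1,\alpha}(\Sigma\times[0,T])$. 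The main obstacle is Step 2, namely pinning down $K_\infty$: the weak test functions from Lemma \ref{WeakRicciEstimate} are compactly supported in $(0,T)$, so one must use the $C^{0,\alpha}$ equicontinuity up to the boundary to conclude $K_\infty$ is determined on all of $\Sigma\times[0,T]$, not just the interior.
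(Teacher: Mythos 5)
Your proposal is correct and follows essentially the same route as the paper: Arzel\`a--Ascoli applied to the $C^{0,\alpha}$-bounded curvature quantities to upgrade the weak/integral convergence of Lemma \ref{WeakRicciEstimate} and Lemma \ref{GoToZero} to uniform convergence, identification of the Gauss curvature limit via the Gauss equation $K^i = K_{12}^i + \lambda_1^i\lambda_2^i$, and then the pinching Theorem \ref{Rigidity2} on each slice. The only (harmless) variation is that you identify the limit of $\lambda_1^i\lambda_2^i$ through the weak convergence of $(H^i)^2$ and $(\lambda_1^i-\lambda_2^i)^2$, whereas the paper uses the pointwise a.e.\ convergence $\lambda_p^i \to e^{-t/2}/r_0$ along a subsequence; both rest on the same inputs from Proposition \ref{avgH} and Lemma \ref{GoToZero}.
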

 \begin{proof}
By Lemma \ref{WeakRicciEstimate} we know that $Rc^j(\nu,\nu) \rightharpoonup 0$ on $\Sigma\times[0,T]$. By combining with the assumption $|Rc^i(\nu,\nu)|_{C^{0,\alpha}(\Sigma\times[0,T])} \le C$ we can conclude by the Arzela-Ascoli Theorem that on a subseqeunce $Rc^j(\nu,\nu) \rightarrow 0$ uniformly over $\Sigma \times [0,T]$. Note that this implies the lower bound in equation \eqref{RicciLowerBound}. By Corollary \ref{GoToZero} combined with the assumption $|R^i|_{C^{0,\alpha}([0,T])} \le C$ we know that on a subsequence $R^j \rightarrow 0$ uniformly over $\Sigma\times[0,T]$.  Then since $K_{12}^j= R^j-2Rc^j(\nu,\nu)$ we find that $K_{12}\rightarrow 0$ uniformly over $\Sigma\times[0,T]$. 
  
Lastly, we note that $K^j = \lambda_1^j\lambda_2^j + K_{12}^j$  which implies 
\begin{align}
|K^j - \lambda_1^j\lambda_2^j| = |K_{12}^j| \rightarrow 0
\end{align}
uniformly over $\Sigma \times [0,T]$. Now by combining with Lemma \ref{GoToZero} we find 
\begin{align}
|K^j - \frac{e^{-t}}{r_0^2}| \le |K^j - \lambda_1^j\lambda_2^j| + |\lambda_1^j\lambda_2^j- \frac{e^{-t}}{r_0^2}| \rightarrow 0 \label{GaussConvergence}
\end{align}
pointwise for a.e. $(x,t) \in \Sigma \times [0,T]$ since $\lambda_p^j \rightarrow \frac{e^{-t/2}}{r_0}$, $p=1,2$, pointwise a.e. on a subsequence. By combining with the assumption on $K^i$ we find
 \begin{align}
|K^j - \frac{e^{-t}}{r_0^2}| \rightarrow 0
\end{align}
uniformly over $\Sigma \times [0,T]$. By combining with the diameter bound diam$(\Sigma_t^i)\le D$ $\forall t \in [0,T]$  and the fact that we know $|\Sigma_t^i| = 4 \pi r_0^2 e^t$ we can apply the pinching result Theorem \ref{Rigidity2} which implies that $|g^j(x,t) - r_0^2e^t\sigma(x)|_{C^{1,\alpha}} \rightarrow 0$ uniformly for each $(x,t) \in \Sigma\times[0,T]$ as $j \rightarrow \infty$ where $\alpha < 1$. Note that this immediately implies that $\bar{g}^i \rightarrow \delta$ by the definition of $\bar{g}^i$.

Then we can get rid of the need for a subsequence by assuming to the contrary that for $\epsilon > 0$ there exists a subsequence so that $|\bar{g}^j - \delta|_{C^{1,\alpha}} \ge \epsilon$, but this subsequence satisfies the hypotheses of Theorem \ref{SPMT} and hence by what we have just shown we know a further subsequence must converge which is a contradiction.
\end{proof}

 \begin{thm}\label{ConvForZeroSlice}
 Let $U_{T}^i \subset M_i^3$ be a sequence s.t. $U_{T}^i\subset \mathcal{M}_{r_0,H_0,I_0}^{T,H_1,A_1}$ and $m_H(\Sigma_{T}^i) \rightarrow 0$ as $i \rightarrow \infty$. If we assume that 
 \begin{align}
|Rc^i(\nu,\nu)|_{C^{0,\alpha}(\Sigma\times[0,T])} &\le C
\\diam(\Sigma_0^i)&\le D\text{ }\forall t \in [0,T]
 \end{align}
 then
 \begin{align}
g^i \rightarrow r_0^2 e^t \sigma 
 \end{align}
  in $C^{0,\alpha}(\Sigma\times \{t\})$. 
 \end{thm}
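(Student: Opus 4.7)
The strategy is to adapt the proof of Theorem~\ref{ConvForEachSlicet} to this weaker setting by replacing the pointwise pinching Theorem~\ref{Rigidity2} with the integral pinching result Corollary~\ref{Rigidity}. For $n=2$ surfaces, Corollary~\ref{Rigidity} demands only $L^p$ smallness of $|K-\lambda|$ for some $p>1$ and delivers $C^{0,\alpha}$ rather than $C^{1,\alpha}$ convergence, which matches exactly what we should expect once the $C^{0,\alpha}$ bounds on $K^i$ and $R^i$ are dropped.

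First, combining the weak convergence $Rc^i(\nu,\nu)\rightharpoonup 0$ from Lemma~\ref{WeakRicciEstimate} with the assumed $C^{0,\alpha}$ bound yields, via Arzel\`a--Ascoli along a subsequence, uniform convergence $Rc^i(\nu,\nu)\to 0$ on $\Sigma\times[0,T]$ and hence the Ricci lower bound \eqref{RicciLowerBound}. Second, since $|A^i|\le A_1$ and $H^i\ge H_0$, the IMCF evolution $\partial_t g = 2A/H$ has uniformly bounded $g$-norm, so $g^i(\cdot,t)$ is uniformly equivalent to $g^i(\cdot,0)$ with comparison constants depending only on $A_1$, $H_0$, $T$. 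The diameter hypothesis on $\Sigma_0^i$ therefore propagates to a uniform diameter bound $\mathrm{diam}(\Sigma_t^i)\le D'$ on every slice, while Lemma~\ref{naiveEstimate} fixes $|\Sigma_t^i|=4\pi r_0^2 e^t$.

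The bulk of the work is to verify the integral pinching hypothesis of Corollary~\ref{Rigidity} on each slice $\Sigma_t^i$ for some $p>1$. Using the Gauss decomposition $K^i = \lambda_1^i\lambda_2^i + \tfrac12 R^i - Rc^i(\nu,\nu)$, the Ricci contribution is handled by step one; the extrinsic contribution converges to $r_0^{-2}e^{-t}$ in every $L^p$ via pointwise a.e.\ convergence from Proposition~\ref{avgH} and Lemma~\ref{GoToZero}, upgraded by dominated convergence using the uniform bound $|A^i|\le A_1$; and the scalar curvature contribution is nonnegative with $\int_{\Sigma_t^i} R^i\,d\mu\to 0$ by Lemma~\ref{GoToZero}. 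The main obstacle is promoting the $L^1$ smallness of $R^i$ to $L^p$ smallness for some $p>1$; I expect to handle this by interpolating against a uniform $L^1$ bound on $K^i$ coming from Gauss--Bonnet ($\int K^i\,d\mu=4\pi$) together with the pointwise lower bound $K^i \ge -A_1^2-C$ and the Gauss relation $R^i = 2K^i + O(1)$. Once this integral pinching is in place, Corollary~\ref{Rigidity} yields the desired $C^{0,\alpha}$ convergence $g^i(\cdot,t)\to r_0^2 e^t\sigma$ on each slice, and the need for subsequence extraction is removed by the usual contradiction argument at the end of the proof of Theorem~\ref{ConvForEachSlicet}.
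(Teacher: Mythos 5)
Your overall strategy is the paper's: repeat the argument of Theorem \ref{ConvForEachSlicet} and substitute the integral pinching result (Corollary \ref{Rigidity}) for the $C^{1,\alpha}$ pinching theorem (Theorem \ref{Rigidity2}), which for $n=2$ requires $L^p$ smallness of $|K^i-e^{-t}/r_0^2|$ for some $p>1$ and returns only $C^{0,\alpha}$ closeness. The place where you go beyond the paper is in trying to actually verify that $L^p$ hypothesis, and that is where your argument breaks. You propose to promote $\int_{\Sigma_t^i}R^i\,d\mu\to 0$ to $L^p$ smallness, $p>1$, by ``interpolating against a uniform $L^1$ bound on $K^i$.'' Interpolation cannot do this: the inequality $\|f\|_{L^p}\le\|f\|_{L^1}^{\theta}\|f\|_{L^q}^{1-\theta}$ needs a uniform bound in some $L^q$ with $q>p$, whereas Gauss--Bonnet together with the pointwise lower bound $K^i\ge-A_1^2-C$ (and the Gauss relation $R^i=2(K^i-\lambda_1^i\lambda_2^i)+2Rc^i(\nu,\nu)$) only give uniform $L^1$ control. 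With $L^1$ smallness and $L^1$ boundedness alone, $\int(R^i)^p$ can remain bounded away from zero for every $p>1$ (take $R^i$ concentrated at height $\epsilon_i^{-1}$ on a set of measure $\epsilon_i^2$). Since Theorem \ref{ConvForZeroSlice} drops precisely the $C^{0,\alpha}$ (indeed any $L^\infty$ or $L^q$, $q>1$) control on $R^i$ and $K^i$ that made this step trivial in Theorem \ref{ConvForEachSlicet}, your interpolation step is a genuine gap.

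In fairness, you have put your finger on a subtlety that the paper's own two-line proof glosses over: the paper repeats the steps of Theorem \ref{ConvForEachSlicet} ``up to \eqref{GaussConvergence}'', i.e.\ it obtains pointwise a.e.\ convergence $K^j\to e^{-t}/r_0^2$ (which does follow on a subsequence from $R^j\ge 0$, $\int_{\Sigma_t^j}R^j\,d\mu\to 0$ in Lemma \ref{GoToZero}, the uniform convergence $Rc^j(\nu,\nu)\to 0$, and the a.e.\ convergence of $\lambda_1^j\lambda_2^j$), and then simply ``applies the integral pinching result''; the passage from a.e.\ convergence to $L^p$ smallness with $p>1$ would need a dominating function for $R^j$, which is exactly what is unavailable. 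Your dominated-convergence treatment of the extrinsic term is fine because $|A^i|\le A_1$ dominates $\lambda_1^i\lambda_2^i$; it is the scalar curvature term that neither you nor the paper controls in $L^p$ for $p>1$. One further small point: the a.e.\ statements from Proposition \ref{avgH} and Lemma \ref{GoToZero} hold only for a.e.\ $t$, and the paper handles a possibly bad slice $t=0$ by restarting the flow at a time arbitrarily close to $0$; your write-up should include this adjustment as well.
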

 \begin{proof}
 The proof of this theorem is extremely similar to Theorem \ref{ConvForEachSlicet} where the steps are repeated exactly up to \eqref{GaussConvergence}. At this point we again use that $\lambda_p^j \rightarrow \frac{e^{-t/2}}{r_0}$, $p=1,2$, pointwise a.e. on a subsequence but if $t=0$ is a time where we do not have convergemce of $\lambda_p^j$ then we choose a time which is arbitrarily close to $t=0$ as the new starting time. Then we complete the argument by applying the integral pinching result Theorem \ref{Rigidity}.
 \end{proof}

Now we state a straight forward corollary for completeness.

\begin{Cor}\label{sigmaControl}
If $g^i \rightarrow r_0^2 e^t \sigma$ in $C^{0,\alpha}(\Sigma\times \{t\})$ uniformly for every $t \in [0,T]$ then
\begin{align}
0 < c_0 \sigma &\le g^i(x,t) \le c_1 \sigma.
 \end{align}
\end{Cor}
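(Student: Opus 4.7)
The plan is to combine uniform convergence on the tail of the sequence with a routine compactness argument handling finitely many initial indices. Since $C^{0,\alpha}$ convergence implies $C^0$ convergence, the hypothesis yields that
\[
\sup_{(x,t)\in \Sigma\times[0,T]} \|g^i(x,t) - r_0^2 e^t \sigma(x)\|_{\sigma} \longrightarrow 0
\]
as $i \to \infty$. First I would fix $\epsilon = \tfrac{1}{2} r_0^2$ and choose $N$ such that for all $i \ge N$ the quadratic form inequality
\[
\tfrac{1}{2} r_0^2 e^t \sigma(x) \le g^i(x,t) \le 2 r_0^2 e^t \sigma(x)
\]
holds on all of $\Sigma\times[0,T]$. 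Then since $1 \le e^t \le e^T$ on $[0,T]$, this absorbs into the uniform bounds $\tfrac{1}{2} r_0^2 \sigma \le g^i(x,t) \le 2 r_0^2 e^T \sigma$ for every $i \ge N$.

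For the finitely many remaining indices $i < N$, each $g^i$ is a smooth positive-definite Riemannian metric on the compact space $\Sigma\times[0,T]$, so the eigenvalues of $g^i$ relative to the (pulled-back, $t$-independent) background metric $\sigma$ attain a strictly positive minimum $c_0^i$ and finite maximum $c_1^i$ by continuity and compactness. Taking
\[
c_0 := \min\bigl\{\tfrac{r_0^2}{2},\, c_0^1,\ldots, c_0^{N-1}\bigr\} > 0, \qquad c_1 := \max\bigl\{ 2 r_0^2 e^T,\, c_1^1, \ldots, c_1^{N-1}\bigr\},
\]
gives the desired uniform bounds $0 < c_0 \sigma \le g^i(x,t) \le c_1 \sigma$ for all $i$. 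There is no real obstacle: this is a direct consequence of uniform $C^0$ convergence together with compactness of $\Sigma\times[0,T]$, and the only mild point is verifying that the hypothesis ``$C^{0,\alpha}(\Sigma\times\{t\})$ uniformly for every $t$'' really gives $C^0$ control uniform in both $x$ and $t$, which is immediate from the stated uniformity.
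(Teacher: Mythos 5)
Your proposal is correct and is essentially the same argument the paper has in mind: the paper's proof is the one-line remark that the bounds ``follow immediately from the uniform $C^0$ convergence,'' and your write-up simply makes explicit the standard $\epsilon$--$N$ split (uniform two-sided control for $i\ge N$ via $\tfrac12 r_0^2 e^t\sigma \le g^i \le 2r_0^2 e^t\sigma$ together with $1\le e^t\le e^T$, plus compactness for the finitely many initial indices). Nothing further is needed.
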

\begin{proof}
This follows immediately from the uniform $C^0$ convergence.
\end{proof}

Lastly we show how to obtain diameter control of $U_T^i$ from control on $g^i$ and the assumptions on mean curvature.

\begin{lem}\label{DiamControl}
If we assume that
\begin{align}
 g^i(x,t) \le c_1 \sigma,
 \end{align}
 then if we define $\delta_c = dt^2 + \sigma$ we find
 \begin{align}
diam(U_T^i, \hat{g}^i)&\le \max(H_0^{-1},c_1,1) diam(\Sigma \times [0,T],\delta_c)
 \end{align}
\end{lem}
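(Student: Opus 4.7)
The plan is to bound $\hat{g}^i$ pointwise as a quadratic form by a multiple of $\delta_c$, which immediately gives a Lipschitz comparison of the induced length structures, and hence a comparison of diameters.

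First I would use the two ingredients we already have: the lower bound $H_i(x,t) \ge H_0 > 0$ from the class $\mathcal{M}_{r_0,H_0,I_0}^{T,H_1,A_1}$, which gives $\frac{1}{H_i(x,t)^2} \le \frac{1}{H_0^2}$, together with the hypothesis $g^i(x,t) \le c_1 \sigma$. For any tangent vector $v = \alpha\,\partial_t + w$ at a point of $\Sigma \times [0,T]$, this yields
\begin{align*}
\hat{g}^i(v,v) \;=\; \tfrac{\alpha^2}{H_i^2} + g^i(w,w) \;\le\; \tfrac{1}{H_0^2}\alpha^2 + c_1\,\sigma(w,w) \;\le\; \max(H_0^{-2},c_1)\,\delta_c(v,v).
\end{align*}

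Next, for any piecewise smooth curve $\gamma:[a,b]\to \Sigma\times[0,T]$, integrating the pointwise inequality along $\gamma$ gives
\begin{align*}
L_{\hat{g}^i}(\gamma) \;\le\; \sqrt{\max(H_0^{-2},c_1)}\; L_{\delta_c}(\gamma) \;=\; \max(H_0^{-1},\sqrt{c_1})\, L_{\delta_c}(\gamma).
\end{align*}
Since $\sqrt{c_1}\le \max(c_1,1)$, this is bounded by $\max(H_0^{-1},c_1,1)\,L_{\delta_c}(\gamma)$. Taking the infimum over all curves connecting a given pair of points $p,q\in \Sigma\times[0,T]$ produces the distance comparison $d_{\hat{g}^i}(p,q) \le \max(H_0^{-1},c_1,1)\, d_{\delta_c}(p,q)$, and then taking the supremum over pairs $p,q$ gives the stated diameter bound.

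There is essentially no obstacle here; the only mild point to be careful about is the passage from the quadratic-form inequality to the length inequality (one must take a square root, which is why the factor $\sqrt{c_1}$ appears and why the constant is then bounded by $\max(H_0^{-1},c_1,1)$ rather than by $\max(H_0^{-2},c_1)$). Everything else follows from the definition of the path metric and the standard fact that a pointwise Lipschitz comparison of Riemannian metrics descends to a Lipschitz comparison of the induced distance functions.
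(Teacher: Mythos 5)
Your proof is correct and is essentially the same argument as the paper's: both reduce to the pointwise bounds $1/H_i^2 \le 1/H_0^2$ and $g^i \le c_1\sigma$ and pull the constant out of the length integral (the paper just applies this along the $\delta_c$-minimizing curve rather than stating the quadratic-form comparison for all curves). Your explicit square-root step, giving $\max(H_0^{-1},\sqrt{c_1}) \le \max(H_0^{-1},c_1,1)$, is in fact a cleaner justification of the constant than the paper provides.
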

\begin{proof}
Let $C(t) = (T(t),\vec{\theta(t)})$ be the curve which realizes the diameter of $(U_T^i,\hat{g}^i)$ between the points $p,q \in U_T^i$ and let $\gamma(t)$ be the curve which minimizes the distance between $p,q$ with respect to $\delta_c = dt^2 + \sigma$. Now notice
\begin{align}
diam(U_T^i, \hat{g}^i)&= d_{\hat{g}^i}(p,q) =L_{\hat{g}^i}(C) \le L_{\hat{g}^i}(\gamma)
\\&= \int_0^1 \sqrt{\frac{T'(t)^2}{H(x,t)^2} + g^i(\theta'(t),\theta'(t))}dt
\\&\le \int_0^1 \sqrt{\frac{T'(t)^2}{H_0^2} + c_1\sigma(\theta'(t),\theta'(t))}dt
\\&\le \max(H_0^{-1},c_1,1) \int_0^1 \sqrt{T'(t)^2 +\sigma(\theta'(t),\theta'(t))}dt
\\&= \max(H_0^{-1},c_1,1)L_{\delta_c}(\gamma)
\\&= \max(H_0^{-1},c_1,1)d_{\delta_c}(p,q)
\\&\le \max(H_0^{-1},c_1,1) diam(\Sigma \times [0,T],\delta_c)
\end{align}
\end{proof}

\subsection{Proof of Main Theorems}
In this subsection we will finish the proofs of Theorem \ref{SPMT} and Theorem \ref{SPMTJumpRegion} which we note are fairly quick proofs at this point since we have organized the important results in subsection \ref{subsec: Obtaining Hypotheses}.
\vspace{.5cm}

\textbf{Proof of  Theorem \ref{SPMT}:}
 \begin{proof}
 By Theorem \ref{ConvForEachSlicet}, Corollary \ref{C0BoundBelow}, Corollary \ref{sigmaControl}, and Lemma \ref{DiamControl}, combined with the $L^2$ convergence results of \cite{BA2} we have the hypthotheses necessary to apply Theorem \ref{IMCFConv} which completes the proof of Theorem \ref{SPMT}.
 \end{proof}
 
 \textbf{Poof of Theorem \ref{SPMTLessCurv}:}
 \begin{proof}
  By Theorem \ref{ConvForZeroSlice}, Corollary \ref{C0BoundBelow}, assumption \ref{MetricBounds}, and Lemma \ref{DiamControl}, combined with the $L^2$ convergence results of \cite{BA2} we have the hypthotheses necessary to apply Theorem \ref{IMCFConv} which completes the proof of Theorem \ref{SPMT}.
 \end{proof}

 \textbf{Proof of  Theorem \ref{SPMTJumpRegion}:}
 \begin{proof}
 First we notice that the assumptions of Theorem \ref{SPMTJumpRegion} allow us to apply Theorem \ref{SPMT} as well as Lemma \ref{VolumeEstimate}, Lemma \ref{DiameterEstimate} and Lemma \ref{DistancePreservingEstimate}. All of these results give us the necessary hypotheses to apply Theorem \ref{SWIFonCompactSets} which completes the proof of Theorem \ref{SPMTJumpRegion}.
 \end{proof}
 
  \textbf{Poof of Theorem \ref{SPMTJumpRegionWeakCurv}:}
 \begin{proof}
 First we notice that the assumptions of Theorem \ref{SPMTJumpRegionWeakCurv} allow us to apply Theorem \ref{SPMTLessCurv} as well as Lemma \ref{VolumeEstimate}, Lemma \ref{DiameterEstimate} and Lemma \ref{DistancePreservingEstimate}. All of these results give us the necessary hypotheses to apply Theorem \ref{SWIFonCompactSets} which completes the proof of Theorem \ref{SPMTJumpRegionWeakCurv}.
 \end{proof}
 
 \section{Examples and Ideas for Further Study} \label{sec Example}
 
 In this section we give three examples which illustrate the hypotheses assume in the main theorems as well as discuss possible uses of the stability theorems of the author in combination with further understanding of the properties of weak and smooth IMCF.
 
\begin{ex}\label{NoGHExample}
In the paper by Lee and Sormani \cite{LS1} an example is given of a manifold whose mass is going to zero with increasingly many, increasingly thin gravity wells of a fixed depth that has a SWIF limit but does not converge in the GH sense (See Example 5.6 of \cite{LS1}). In this paper we adapt this example, depicted in figure \ref{fig IncreasinglyManyWellsExample}, in order to explore the hypotheses of Theorem \ref{SPMTJumpRegion} for an example which does not converge in the GH sense. 

\begin{figure}[H]\label{fig IncreasinglyManyWellsExample}
\begin{tikzpicture}[scale=.5]
\draw (-8,4.5) node{$U_{T_1}^1$};
\draw (-8,0) circle (1.8cm);
\draw[dotted] (-8,0) circle (1.3cm);
\draw[dotted] (-8,0) circle (2.3cm);
\draw (-8,0) circle (3.6cm);
\draw[fill=black] (-9.8,0) circle (.15cm);
\draw[fill=black] (-6.2,0) circle (.15cm);
\draw[dotted] (-9.8,0) circle (.35cm);
\draw[dotted] (-6.2,0) circle (.35cm);
%\draw[fill=black] (-11.6,0) circle (.15cm);
%\draw[fill=black] (-4.4,0) circle (.15cm);
\draw (0,4.5) node{$U_{T_2}^2$};
\draw (0,0) circle (1.8cm);
\draw (0,0) circle (3.6cm);
\draw[dotted] (0,0) circle (1.3cm);
\draw[dotted] (0,0) circle (2.3cm);
\draw[fill=black] (-1.8,0) circle (.1cm);
\draw[fill=black] (1.8,0) circle (.1cm);
\draw[dotted] (-1.8,0) circle (.3cm);
\draw[dotted] (1.8,0) circle (.3cm);
%\draw[fill=black] (-3.6,0) circle (.15cm);
%\draw[fill=black] (3.6,0) circle (.15cm);
\draw[fill=black] (0,-1.8) circle (.1cm);
\draw[fill=black] (0,1.8) circle (.1cm);
\draw[dotted] (0,-1.8) circle (.3cm);
\draw[dotted] (0,1.8) circle (.3cm);
%\draw[fill=black] (0,-3.6) circle (.15cm);
%\draw[fill=black] (0,3.6) circle (.15cm);
\draw (8,4.5) node{$U_{T_3}^3$};
\draw (8,0) circle (1.8cm);
\draw (8,0) circle (3.6cm);
\draw[dotted] (8,0) circle (1.3cm);
\draw[dotted] (8,0) circle (2.3cm);
\draw[fill=black] (8,1.8) circle (.05cm);
\draw[dotted] (8,1.8) circle (.25cm);
%\draw[fill=black] (8,3.6) circle (.15cm);
\draw[fill=black] (8,-1.8) circle (.05cm);
\draw[dotted] (8,-1.8) circle (.25cm);
%\draw[fill=black] (8,-3.6) circle (.15cm);
\draw[fill=black] (6.2,0) circle (.05cm);
\draw[dotted] (6.2,0) circle (.25cm);
%\draw[fill=black] (4.4,0) circle (.15cm);
\draw[fill=black] (9.8,0) circle (.05cm);
\draw[dotted] (9.8,0) circle (.25cm);
%\draw[fill=black] (11.6,0) circle (.15cm);%%%%
\draw[fill=black] (8+1.8*.707,1.8*.707) circle (.05cm);
\draw[dotted] (8+1.8*.707,1.8*.707) circle (.25cm);
%\draw[fill=black] (8+3.6*.707,3.6*.707) circle (.15cm);
\draw[fill=black] (8-1.8*.707,1.8*.707) circle (.05cm);
\draw[dotted] (8-1.8*.707,1.8*.707) circle (.25cm);
%\draw[fill=black] (8-3.6*.707,3.6*.707) circle (.15cm);
\draw[fill=black] (8+1.8*.707,-1.8*.707) circle (.05cm);
\draw[dotted] (8+1.8*.707,-1.8*.707) circle (.25cm);
%\draw[fill=black] (8+3.6*.707,-3.6*.707) circle (.15cm);
\draw[fill=black] (8-1.8*.707,-1.8*.707) circle (.05cm);
\draw[dotted] (8-1.8*.707,-1.8*.707) circle (.25cm);
%\draw[fill=black] (8-3.6*.707,-3.6*.707) circle (.15cm);
\draw (13,0) node{$\cdot\cdot\cdot$};
\end{tikzpicture}
\caption{Sequence of manifolds with increasingly thin gravity wells of a fixed depth centered at the points indicated.}
\end{figure}
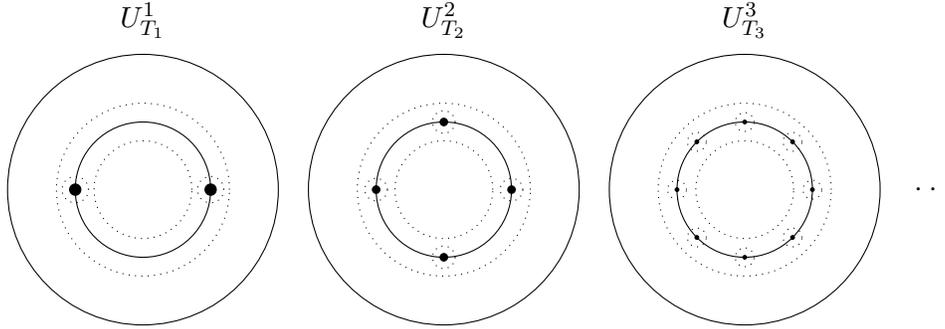

Here we consider the smooth solution of IMCF of $\Sigma_0^i$ inside the manifolds $M_i$, which we will define in a similar way as Lee and Sormani \cite{LS1} Example 5.6, so that $M_i$ is asymptotically flat manifolds with ADM mass $1/i$. In the process of constructing these examples Lee and Sormani show that a rotationally symmetric manifold exists with constant positive sectional curvature $K_i$ on annular regions, which are depicted by dotted circles above in figure \ref{fig IncreasinglyManyWellsExample}, where $K_i \rightarrow 0$ as $i \rightarrow \infty$ and which were shown to be annular regions of spheres of radius $\frac{1}{\sqrt{K_i}}$. Outside of these annular regions the metric is rotationally symmetric and the metric becomes essentially Schwarzschild of mass $1/i$. Then by Schoen and Yau \cite{SY2}, and Gromov and Lawson \cite{GL} one can remove arbitrarily small balls, depicted as dotted circles surrounding the gravity wells in figure \ref{fig IncreasinglyManyWellsExample}, such that the metric is unchanged outside these small balls.

If the solution exists beyond the annular regions then by the work of Scheuer \cite{S} the solution should exist for all time and hence we can conclude that $m_H(\Sigma_{T_i}^i) \le \frac{1}{i}$. One issue is that we do not know that $T_i$ will not approach $0$ as $i \rightarrow \infty$, i.e. the smooth solution of IMCF starting at $\Sigma_0^i$ exists for a shorter and shorter time along the sequence (This motivates the need for further exploration of the existence of smooth solutions to IMCF under integral curvature bounds). If $T_i \ge \bar{T} > 0$ then Theorem \ref{SPMTJumpRegion} applies to show SWIF convergence to Euclidean space but one should note that for this very constructive example one can argue the stability directly using an argument similar to Lee and Sormani \cite{LS1} in Example 5.6.

Despite this limitation, this example is particularly instructive for illustrating \eqref{distanceAssumption1} of Theorem \ref{SPMTJumpRegion}. In figure \ref{fig BumpySphere} we see what $\Sigma_0^i$ should look like in this example where we have also drawn a sphere of the same area $S^2_{r_0}$ and a sphere $S^2_{r_i}$ whose distances are smaller than $\Sigma_0^i$, i.e. if $\bar{F}_i= R_i \circ F: \Sigma_0^i \rightarrow S^2_{r_i}$ is a diffeomorphism, constructed from an area preserving diffeomorphism $F: \Sigma_0^i \rightarrow S^2_{r_0}$ and the natural map $R_i: S^2_{r_0} \rightarrow S^2_{r_i}$ which scales the sphere, then we define
\begin{align}
r_i = \sup \{r > 0| d_{\Sigma_0^i}(p,q) \ge d_{S^2_{r_i}}(\bar{F}_i(p),\bar{F}_i(q))\}.
\end{align}
As $i$ increases and the spikes in figure \ref{fig BumpySphere} become thinner the contributed area of the spikes will become smaller and hence $r_i \rightarrow r_0$ as $i \rightarrow \infty$ which implies that condition \eqref{distanceAssumption1} is satisfied.

\begin{figure}[H]\label{fig BumpySphere}
\begin{tikzpicture}
\draw[dotted] (0,0) circle (3cm);
\draw[dotted] (0,0) circle (2cm);
\draw (1.9685,.25) arc (7:82:2cm);
\draw (-.25,1.9685) arc (97:172:2cm);
\draw (-1.9685,-.25) arc (187:262:2cm);
\draw (.25,-1.9685) arc (277:352:2cm);
\draw (-.25,1.9685) .. controls (0,5) .. (.25,1.9685);
\draw (1.9685,-.25) .. controls (5,0) .. (1.9685,.25);
\draw (-.25,-1.9685) .. controls (0,-5) .. (.25,-1.9685);
\draw (-1.9685,-.25) .. controls (-5,0) .. (-1.9685,.25);
\draw (1.6,0) node{$S^2_{r_i}$};
\draw (1,1) node{$\Sigma_0^i$};
\draw (2.8,2.8) node{$S^2(r_0)$};
\end{tikzpicture}
\caption{$\Sigma_0^i$, a sphere of the same area $S^2_{r_0}$, and the largest sphere $S^2_{r_i}$ whose distances are smaller than $\Sigma_0^i$.}
\end{figure}
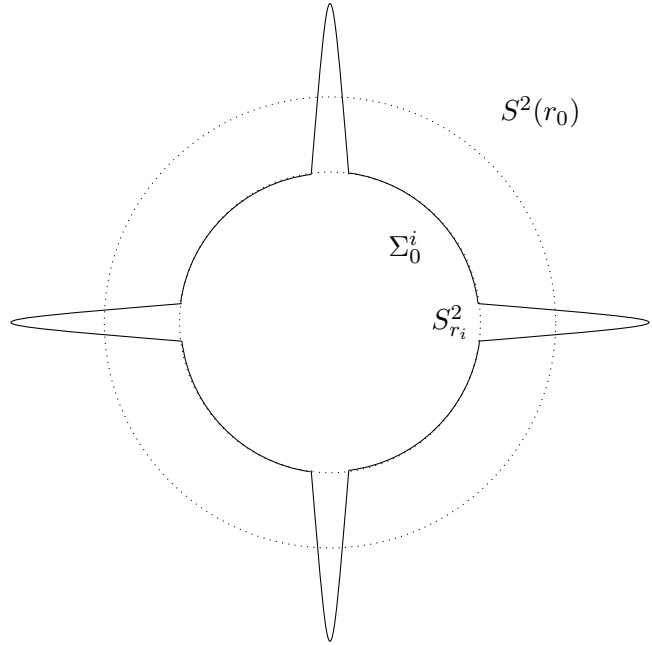
\end{ex}

\begin{ex}\label{Ex WeakConvergence}
In this example we give an illustration of how the main theorems of this paper would combine with results on weak solutions to IMCF. Note that the weak solution of IMCF jumps over gravity wells and so in figure \ref{fig IncreasinglyManyWellsWeakConvergenceExample} we show gravity wells lying on a dotted circle which is separated from the solid region $U^i$ foliated by a weak solution of IMCF. We also depict the region inside $W_i$ and the region outside $V_i$. Note that the weak solution does not foliate the region $M \setminus (V^i\cup U^i \cup W^i)$ and so a separate argument is required to control these jump regions. 

The jump regions become smaller in the parameterizing space as depicted since the weak solution should foliate more of the region as the wells become thinner. The arguments in \cite{LS1} show that the volume of  $M \setminus (V^i\cup U^i \cup W^i)$ will go to zero and hence this example will SWIF converge to Euclidean space on compact subsets but in general the goal would be to control the volume of the jump regions by the mass. If one can show that the volume of the jump region is controlled or going to zero then you can combine with a result like Theorem \ref{SPMTJumpRegion} to show that the entire region is converging to annulus in Euclidean space under SWIF convergence..
\end{ex}

\begin{figure}[H]\label{fig IncreasinglyManyWellsWeakConvergenceExample}
\begin{tikzpicture}[scale=.5]
\draw (-8,0) circle (1.6cm);
\draw (-8,0) node{$W^1$};
\draw (-8,2.7) node{$U^1$};
\draw (-8,0) circle (3.8cm);
\draw (-8,4.6) node{$V^1$};
\draw[dashed] (-8,0) circle (1.8cm);
\draw (-8,0) circle (2cm);
\draw (-8,0) circle (3.4cm);
\draw[dashed] (-8,0) circle (3.6cm);
\draw[fill=black] (-9.8,0) circle (.15cm);
\draw[fill=black] (-6.2,0) circle (.15cm);
\draw[fill=black] (-11.6,0) circle (.15cm);
\draw[fill=black] (-4.4,0) circle (.15cm);
\draw (0,0) circle (1.65cm);
\draw (0,0) node{$W^2$};
\draw (0,2.7) node{$U^2$};
\draw (0,0) circle (3.75cm);
\draw (0,4.6) node{$V^2$};
\draw[dashed] (0,0) circle (1.8cm);
\draw (0,0) circle (1.95cm);
\draw (0,0) circle (3.45cm);
\draw[dashed] (0,0) circle (3.6cm);
\draw[fill=black] (-1.8,0) circle (.1cm);
\draw[fill=black] (1.8,0) circle (.1cm);
\draw[fill=black] (-3.6,0) circle (.1cm);
\draw[fill=black] (3.6,0) circle (.1cm);
\draw[fill=black] (0,-1.8) circle (.1cm);
\draw[fill=black] (0,1.8) circle (.1cm);
\draw[fill=black] (0,-3.6) circle (.1cm);
\draw[fill=black] (0,3.6) circle (.1cm);
\draw (8,0) circle (1.7cm);
\draw (8,0) node{$W^3$};
\draw (8,2.7) node{$U^3$};
\draw (8,0) circle (3.7cm);
\draw (8,4.6) node{$V^3$};
\draw[dashed] (8,0) circle (1.8cm);
\draw (8,0) circle (1.9cm);
\draw (8,0) circle (3.5cm);
\draw[dashed] (8,0) circle (3.6cm);
\draw[fill=black] (8,1.8) circle (.05cm);
\draw[fill=black] (8,3.6) circle (.05cm);
\draw[fill=black] (8,-1.8) circle (.05cm);
\draw[fill=black] (8,-3.6) circle (.05cm);
\draw[fill=black] (6.2,0) circle (.05cm);
\draw[fill=black] (4.4,0) circle (.05cm);
\draw[fill=black] (9.8,0) circle (.05cm);
\draw[fill=black] (11.6,0) circle (.05cm);%%%%
\draw[fill=black] (8+1.8*.707,1.8*.707) circle (.05cm);
\draw[fill=black] (8+3.6*.707,3.6*.707) circle (.05cm);
\draw[fill=black] (8-1.8*.707,1.8*.707) circle (.05cm);
\draw[fill=black] (8-3.6*.707,3.6*.707) circle (.05cm);
\draw[fill=black] (8+1.8*.707,-1.8*.707) circle (.05cm);
\draw[fill=black] (8+3.6*.707,-3.6*.707) circle (.05cm);
\draw[fill=black] (8-1.8*.707,-1.8*.707) circle (.05cm);
\draw[fill=black] (8-3.6*.707,-3.6*.707) circle (.05cm);
\draw (13,0) node{$\cdot\cdot\cdot$};
\end{tikzpicture}
\caption{Sequence of manifolds with increasingly thin gravity wells of a fixed depth centered at the points indicated.}
\end{figure}
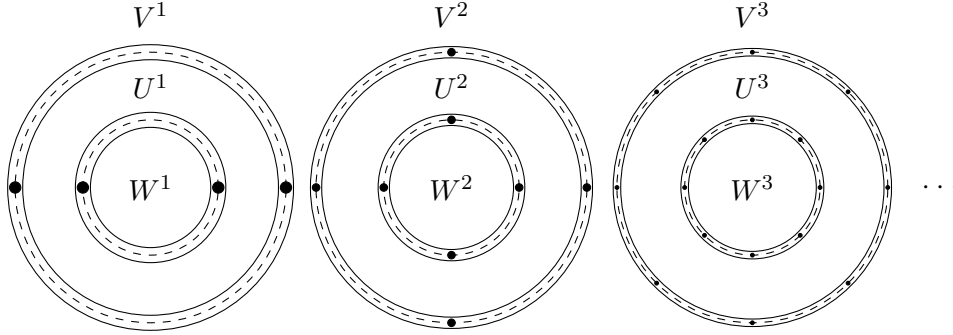

\begin{ex}\label{Ex UsingWarpedProductStability}
In this example we consider four solutions to IMCF which are uniformly controlled along the sequence, i.e. $U_T^i,V_T^i, W_T^i,Z_T^i \subset \mathcal{M}_{r_0,H_0,I_0}^{T,H_1,A_1}$ (See Figure \ref{fig OverlappingRegionsExample}). Here we assume that the Hawking mass of the outer boundary of each region is going to zero and hence by Theorem \ref{SPMTWarped} each region is getting $L^2$ close to a particular warped product. If $U_T^i$ is reaching far enough into the asymptotically flat portion of each $M_i$, where $M_i$ is a uniformly asymptotically flat sequence (See \cite{BA2} for the definition of uniformly asymptotically flat sequence), so that Theorem \ref{SPMTPrevious} applies then we know that $U_T^i$ is converging in $L^2$ to Euclidean space. Now since $U_T^i$ overlaps the other regions $V_T^i, W_T^i, Z_T^i$ we can conclude by uniqueness of limits that the warped product which these regions are converging to is also Euclidean space. 

This suggests that if we can cover a sequence of uniformly asymptotically flat manifolds $M_i$ by a collection of uniformly controlled IMCF coordinate charts then we will be able to show $L^2$ convergence of $M_i$ on compact subsets to Euclidean space. If one assumes further curvature conditions on the coordinate charts then the results of this paper will apply to obtain $C^{0,\alpha}$, GH or SWIF convergence to Euclidean space on compact subsets. This would be analogous to the use of harmonic coordinate charts in the case of smooth Cheeger-Gromov convergence, see \cite{C,G}, under sectional curvature bounds, which was also used by Anderson \cite{A} under Ricci curvature bounds, and has been used by many other authors to develop compactness theorems. 

In order for this to work for IMCF coordinate charts we would need new estimates, under possibly weak integral curvature bounds, which guarantee the existence of smooth, uniformly controlled IMCF charts so that the entire coordinate chart has a uniform lower bound on the existence time or the minimum of the $t$ direction distance. This motivates the pursuit of regularity estimates on smooth and weak IMCF, analogous to the results of Huisken and Ilmanen \cite{HI2}, in order to obtain a better understanding of the existence of IMCF coordinate charts on general manifolds.
\end{ex}

\begin{figure}[H]\label{fig OverlappingRegionsExample}
\begin{tikzpicture}[scale=.5]
\draw[gray] (2.5,2) circle (3.25cm);
\draw[gray] (2.5,2) circle (.25cm);
\draw[dotted] (-2.5,2) circle (3.85cm);
\draw[dotted] (-2.5,2) circle (1cm);
\draw[dashed] (1,-3) circle (4.6cm);
\draw[dashed] (1,-3) circle (1.25cm);
\draw[rotate=45,ultra thin] (0,0) ellipse (3.7cm and 6cm);
\draw[rotate=45,ultra thin] (0,0) ellipse (10cm and 11cm);
\draw (0,8) node{$U_T^i$};
\draw (1,-6) node{$V_T^i$};
\draw (4,3) node{$W_T^i$};
\draw (-1.25,3.75) node{$Z_T^i$};
\end{tikzpicture}
\caption{Collection of uniformly controlled, overlapping IMCF coordinate charts.}
\end{figure}
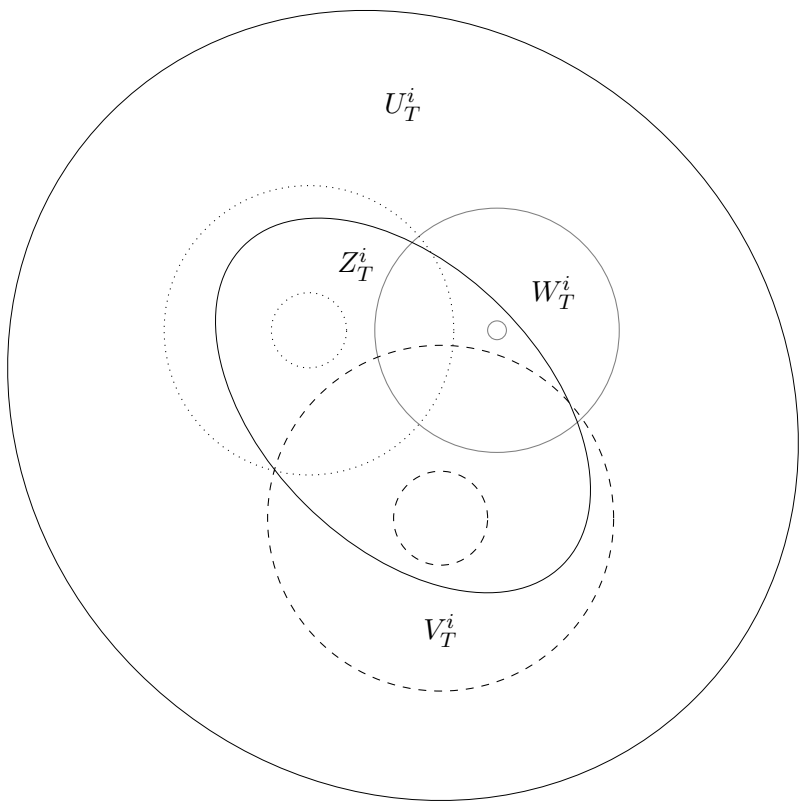

\end{document}